\newtheorem{theorem}{Theorem}[section]
\newtheorem{lemma}[theorem]{Lemma}
\newtheorem{proposition}[theorem]{Proposition}
\theoremstyle{definition}
\theoremstyle{remark}
\newtheorem*{remark}{Remark}
\numberwithin{equation}{section}
\newcommand{\be}{\begin{equation}}
\newcommand{\ee}{\end{equation}}
\newcommand{\bea}{\begin{eqnarray}}
 \newcommand{\eea}{\end{eqnarray}}
\def\TH(#1){\label{#1}}\def\thv(#1){\ref{#1}}
\def\Eq(#1){\label{#1}}\def\eqv(#1){(\ref{#1})}
\def \1{\mathbbm{1}}
\def\wt {\widetilde}
\def\wh{\widehat}
\def\dist{\mathop{\rm dist}\nolimits}
\def\d{\mathrm{d}}
\def\<{\langle}
\def\>{\rangle}
\def\a{\alpha}
\def\b{\beta}
\def\e{\epsilon}
\def\g{\gamma}
\def\l{\lambda}
\def\t{\tau}
\def\R{{\Bbb R}}  
\def\N{{\Bbb N}}  
\def\P{{\Bbb P}}  
\def\Z{{\Bbb Z}}  
\def\Q{{\Bbb Q}}  
\def\E{{\Bbb E}}  
\let\cal=\mathcal
\def\EE{{\cal E}}
\def\FF{{\cal F}}
\def\LL{{\cal L}}
\def\PP{{\cal P}}
\def\VV{{\cal V}}
\def\VV{{\cal V}}
 \def \b {{\beta}}
 \def \e {{\varepsilon}}
 \def \t {{\tau}}
 \def \g {{\gamma}}
 \def \l {{\lambda}}
 \def \d {{\delta}}
 \def \a {{\alpha}}
\DeclareMathOperator{\asl}{Asl}
\begin{document}

\title[Clock processes on infinite graphs]
{Convergence of clock processes on infinite graphs and aging in Bouchaud's asymmetric trap model on $\Z^d$}
\author[V. Gayrard]{V\'eronique Gayrard}
 \address{V. Gayrard\\I2M, Aix-Marseille Universit\'e\\
39, rue F. Joliot Curie\\13453 Marseille cedex 13, France}
\email{veronique.gayrard@math.cnrs.fr, veronique@gayrard.net}\author[A. \v Svejda]{Ad\'ela \v Svejda}
 \address{A. \v Svejda\\Faculty of Industrial Engineering and Management\\
Technion\\ 32000 Haifa, Israel}
\email{asvejda@technion.ac.il}

\subjclass[2000]{82C44,60K35,60K37} \keywords{Random dynamics, random environments, clock process, L\'evy processes, random conductance models, aging, fractional kinetics process
}
\date{\today}

\begin{abstract}
Using a method developed by Durrett and Resnick \cite{DR78} we establish general criteria for the convergence of properly rescaled clock processes of random dynamics in random environments on infinite graphs. This complements the results of \cite{G10a}, \cite{BG10}, and \cite{BGS12}: put together these results provide a unified framework for proving convergence of clock processes. 
As a first application we prove that Bouchaud's asymmetric trap model on $\Z^d$ exhibits a normal aging behavior for all $d\geq 2$. Namely, we show that certain two-time correlation functions, among which the classical probability to find the process at the same site at two time points, converge, as the age of the process diverges, to the distribution function of the arcsine law. As a byproduct we prove that the fractional kinetics process ages.
\end{abstract}

\thanks{ V.G.~thanks the IAM, Bonn University, the Hausdorff Center, the SFB 611, and the SFB 1060 for kind hospitality. A.S.~thanks the SFB 1060 and the Hausdorff Center for financial support.}
 \maketitle


\section{Introduction and main results}
 \label{S1}
This introduction is made of three parts. In the first we describe the general setting and formulate the problems of interest.  We state our abstract results in Section \ref{S1.2}. Section \ref{S1.3} contains the application to Bouchaud's asymmetric trap model.

 \subsection{Markov jump processes in random environments and clock processes}\label{S1.1}
 
Let $G=(\VV,\LL)$ be a loop-free graph.  The random environment is a collection of random variables, $\{\tau(x), x\in \VV\},$ defined on a common probability space $(\Omega,\FF, \P)$, that are only assumed to be positive. On $\VV$ we consider a continuous time Markov jump process, $X$, with initial distribution $\mu$, whose jump rates $(\lambda(x,y))_{x,y\in \VV}$ satisfy
\be\label{1.1}
\tau(x) \lambda(x,y)=\tau(y) \lambda(y,x), \quad \forall (x,y)\in \LL, \ x\neq y.
\ee
This implies that $X$ is reversible with respect to the random measure on $\VV$ that assigns to $x\in\VV$ the mass $\tau(x)$. 

Clock processes of $X$ have recently been at the center of attention in connection with the study of aging and/or anomalous diffusions. Relevant questions on both topics can be formulated by writing $X$ as a time change of another Markov process $J$,
\be\label{1.3a}
X(t)=J(S^{\leftarrow}(t)), \quad \quad t\geq0,
\ee
and making judicious choices of $S$, the so-called \emph{clock process}. Here $S^{\leftarrow}$ denotes the generalized right continuous inverse of $S$. When studying aging the focus usually is on the total time elapsed along trajectories of $X$ of a given length. This is given by the  \emph{discrete time clock process}
\be\label{1.4}
S(k)\equiv \sum_{i=0}^{k-1}\lambda^{-1}(J(i)) e_i, \quad \quad k\geq 1,
\ee
where $J$ is the discrete time chain with transition probabilities
\be\label{1.3b}
p(x,y)\equiv\lambda(x,y)/\lambda(x) \quad \mbox{ if }(x,y)\in \LL,
\ee
and zero else,
\be\label{1.3}
\textstyle{\lambda(x)\equiv \sum_{y: (x,y)\in \LL} \lambda(x,y), \quad x\in \VV,}
\ee
is the inverse of the mean holding time of $X$ at $x$, and $\{e_i, \ i=0,1,2, \ldots\}$ is an independent collection of i.i.d.~mean one exponential random variables. Knowledge of the large $k$ behavior of $S$ combined with relation \eqref{1.3a} then allows to deduce information on the  long time behavior of the two-time correlation functions that are used to quantify aging in theoretical physics. When interested in scaling limits one looks at \eqref{1.3a} from a different angle. One aims at expressing the process $X$ as a time change of another continuous time process, $J$, for which the usual functional limit theorem holds. One is then naturally led to study the \emph{continuous time clock process}
\be\label{1.6}
S(t)\equiv \int_0^t\lambda^{-1}(J(s))\wt \lambda(J(s)) ds, \quad \quad t\geq 0,
\ee
where $\wt \lambda(x)$ denotes the inverse of the mean holding time  of $J$ at $x$.

It emerged from the bulk of works carried out in the past decade that the occurrence of stable subordinators as the limit of properly rescaled clock processes provides a basic mechanism for both aging and anomalous diffusive behaviors to set in in two main types of models. The first are phenomenological models -- the so-called  trap models of Bouchaud \emph{et al.} \cite{Bou92,BD95,RMB00,RMBPaper}. Introduced in theoretical physics to account for the phenomenon of aging then newly discovered  in the physics of spin glasses, these are simple Markov jump processes that describe the dynamics of spin glasses on long time scales in terms of activated barrier crossing in landscapes made of random 'traps'. Another class of models stems from looking at the actual dynamics of microscopic spin glasses. Interesting such dynamics are Glauber dynamics on state spaces $\VV_n=\{-1,1\}^n$ reversible with respect to the Gibbs measures associated to random Hamiltonians of mean-field spin glasses, such as the REM and $p$-spin SK models.

 The first connection between microscopic dynamics of spin systems and trap models was made  in \cite{BBG1}, \cite{BBG2}, \cite{BBG02}  for a variant of the Glauber dynamics  of the REM (the random hopping dynamics, hereafter RHD) on time scales close to equilibrium, and extended  in \cite{BC06b} to shorter time scales (but still exponential in $n$). There it is shown that the properly rescaled  discrete time clock process \eqref{1.4} converges  $\P$-a.s.~to a stable subordinator. These results were partially extended to the $p$-spin SK models in \cite{BBC08}, for all $p\geq 3$ and in a range of exponentially long time scales, whereas it was shown in \cite{BaGun11} that on sub-exponential times scales the  clock process no longer converges to a stable subordinator but to an extremal process, and this for all $p\geq 2$; both these results were obtained in $\P$-law only. 

The field gained new momentum with the paper \cite{G10a}. Based on a method developed by Durrett and Resnick \cite{DR78} in the late 70's to prove functional limit theorems for dependent random variables, a fresh view on the convergence of clock processes in random environment was proposed and general criteria for convergence of clock processes to subordinators were given. This allowed to improve all earlier results on aging of the RHD of the REM \cite{G10b} and $p$-spin SK models \cite{BG10},  \cite{BGS12},   yielding $\P$-a.s.~results for all $p>4$ (in $\P$-probability else), and paved the way for new advances \cite{Metro}. In all the papers mentioned above clock processes are used to control suitable time-time correlation functions, and aging is deduced.

Meanwhile, in a  different line of research, an important class of trap models on $\Z^d$ known as  Bouchaud's asymmetric trap model (hereafter BATM) \cite{RMB00,RMBPaper} was fully investigated both from the view point of aging 
and scaling limits, in different dimensions and for different values of the asymmetry parameter $\theta\in[0,1]$ (see Section \thv(S1.3) for the definition of BATM). In what follows we call BTM the 'symmetric' version of the model, obtained by setting $\theta=0$. Aging was first proved in the seminal paper \cite{FIN02} for BTM on  $\Z$, and extended to BATM on $\Z$ in \cite{BC05}. Emphasis was first given to the  discrete clock process of  BTM in \cite{BCM1}, for $d=2$, and later in \cite{BC07}, for $d\geq 2$. In both these papers it is proved that for suitable scalings, the clock process converges to a stable subordinator. This is used in \cite{BCM1} to study aging via correlation functions, and in \cite{BC07}
to prove convergence of the properly normalized BTM to the so-called Fractional-Kinetics process (see \eqref{1.39aa}). 
More recently, \cite{FonMat10} established aging for transient variants of BTM on $\Z^d$ for all $d\geq 1$.
The continuous time clock process \eqv(1.6) came into play later, in the study of BATM on $\Z^d$, $d\geq 2$,
\cite{BaCe11},  
\cite{BarZhe10}, 
\cite{Ce11},  
\cite{Mou11}. 
There, $J$ is chosen as the so-called variable speed random walk (hereafter VSRW), that is to say, the continuous time Markov chain with rates $\wt \lambda(x,y)=\tau(x)\lambda(x,y)$. This is a central object in the literature on random conductance models and its scaling limit is well-understood (for the most recent and strongest results see \cite{BaDe10} and \cite{ABDH}). Convergence of the rescaled clock process to a stable subordinator is established in
\cite{BaCe11}, 
\cite{Ce11}, 
\cite{Mou11}
under various assumptions on $d$ and using various techniques (see Section  \thv(S1.3) for a detailed discussion). Consequences for the scaling limit of BATM are drawn but not, to our knowledge,  for correlation functions.

The question naturally arises as to whether the method put forward in \cite{G10a} could allow to make progress on this issue. How to implement it however is not straightforward. The formulation of the general, abstract  criteria  for convergence of clock processes of \cite{G10a} and \cite{BG10} was geared to the setting of sequences of finite graphs suited for dealing with mean field spin glasses. Furthermore, in all applications, explicit use is made of the fact that the discrete time chain $J$ in \eqref{1.4} admits an invariant probability measure and is, moreover, sufficiently fast mixing. In contrast, the arena of BATM on $\Z^d$ is that of dynamics on infinite graphs that do not admit of an invariant probability measure.

In the present paper we address  this question in the general setting of Markov jump processes on infinite graphs that satisfy \eqv(1.1). We formulate abstract sufficient conditions for properly rescaled clock processes of the form \eqv(1.3a) (both continuous or discrete) to converge to stable subordinators. (It will be seen that the r\^ ole of the invariant measure is now played by a certain 'mean empirical measure'.) We then apply this result to BATM  for all $d\geq 2$. This,  in turn, enables us to control several (classical or natural) correlation functions through which the aging behavior of the process can be characterized, and prove the existence of 'normal aging'.


\subsection{Main results}\label{S1.2}

In this paper, we consider continuous and discrete time clock processes in a unified setting and introduce notations that allow to handle them simultaneously. From now on let $J$ be either a continuous or discrete time Markov chain having transition probabilities \eqref{1.3b} and initial distribution $\mu$. Continuous time chains are assumed to be non-explosive (see Chapter 3.5 in \cite{Norris}). To a Markov chain $J$ we associate a process $  \ell=\{  \ell_t(x), x\in \VV, t\geq 0\}$ and a sequence $\wt \Lambda=\{\wt\l(x), x\in \VV\}$ defined as follows. 
When $J$ is a continuous time Markov chain $\wt \lambda(x)$ is the holding time parameter of $J$ at $x$ and $  \ell_t(x)$ is the local time
\be\label{1.7}
\textstyle{  \ell_t(x)\equiv \int_0^t \1_{J(s)=x}\  ds},
\ee
namely, the total time spent by $J$ at $x$ in the time interval $[0,t]$.
When $J$ is a discrete time Markov chain we set $\wt \lambda(x)\equiv 1$. In this case $\ell_t(x)$ is defined through
\be\label{1.8}
\textstyle{ {\ell}_t (x)\equiv
\sum_{i=0}^{\lfloor t\rfloor} e_{i}\1_{J(i)=x}, }
\ee
where $\{e_i, i=0,1,2,\ldots\}$ is a collection of i.i.d.~mean one exponential random variables independent of everything else. Observe that this is the local time of a continuous time Markov chain whose mean holding times are identically one.
The clock process is then given by
\be\label{1.9}
S^J(t)\equiv \sum_{x\in \VV}  {\ell}_t(x) \wt \lambda(x) \lambda^{-1}(x), \quad t\geq 0.
\ee
Notice that this definition is consistent with \eqref{1.4} and \eqref{1.6}. In particular, one can check that the relation \eqref{1.3a} between $S^J, J$, and $X$ is satisfied. In the sequel we write $P_{\mu}$ for the law of $J$ and $\PP_{\mu}$ for the law of $X$ with initial distribution $\mu$. We also write $P_x\equiv P_{\d_x}$ and $\PP_x\equiv\PP_{\d_x}$. We denote expectations with respect to $\PP_\mu$ and $P_\mu$ respectively by $\EE_\mu$ and $E_\mu$.
Of course these are random measures on $(\Omega,\FF,\P)$. 

Let $a_n$ and $c_n$ be non-decreasing sequences. We think of $c_n$ as the time scale on which  the process $X$ is observed, and of $a_n$ as an auxiliary time scale for the Markov chain $J$. The question of interest now becomes to find conditions for the re-scaled sequence  
\be\label{1.10}
S_n^J(t)\equiv c_n^{-1}\sum_{x\in \VV} \widetilde{\lambda}(x)\lambda^{-1}(x) {\ell}_{\lfloor a_n t\rfloor} (x), \quad t\geq 0,
\ee
to converge weakly, as a sequence of random elements in the space $D[0,\infty)$ of c\`adl\`ag functions on $[0,\infty)$, $\P$-almost surely in the random environment.

To answer this question we use a method developed by Durrett and Resnick \cite{DR78} that yields criteria for sums of correlated random variables to converge that are particularly useful when applied to clock processes. Following  \cite{BG10}, we will not apply it to  $S_n^J$ directly but rather to a `blocked' version of $S_n^J$. Namely, we introduce a new sequence, $\theta_n$, chosen such that $\theta_n\ll a_n$, and use it to define the block variables
\be\label{1.12}
Z_{n,k}^J\equiv c_n^{-1}\sum_{x\in \VV} \widetilde{\lambda}(x)\lambda^{-1}(x)  \left(  \ell_{\theta_nk}(x)-  \ell_{\theta_n( k-1)}(x)\right),\quad k\geq 1.
\ee
The reason for this is that in many examples of interest, we do know that the jumps of the limiting clock process do not come from isolated jumps of $S_n^J$ but from block variables, either because of strong local spatial correlations of the random environment (as in spin glasses) or because of strong local temporal correlations of the process $J$ (as e.g.~in BTM on $\Z^2$) or by a conjunction of these reasons.
Set $k_n(t)\equiv \lfloor \lfloor a_n t\rfloor / \theta_n \rfloor $. 
The blocked clock process $S_n^{J,b}$ is then defined by
\be\label{1.13}
S^{J,b}_n(t)\equiv \sum_{k=0}^{k_n(t)-1}Z_{n,k+1}^J+ Z_{n,0}^J, \quad t\geq 0. 
\ee
where the sum over $k$ is zero whenever $k_n(t)-1<0$, and 
$
Z_{n,0}^J\equiv\sum_{x\in \VV} \widetilde{\lambda}(x)\lambda^{-1}(x) {\ell}_{0} (x)
$
(note that this is non-zero for discrete time chains $J$ only).

The convergence criteria we obtain bear on a small number of quantities that we now introduce. For $x\in \VV$ and $u>0$ let 
\be\label{1.14}
Q_n^u(x)\equiv\PP_{x}\left(Z_{n,1}^J>u \right)
\ee
be the tail distribution of $Z_{n,1}^J$, starting in $x$. For each fixed $t>0$, we construct a probability measure on $\VV$ through
\be\label{1.15}
\pi_n^t(x)\equiv E_{\mu}\Bigl((k_n(t))^{-1} \sum_{k=1}^{k_n(t)-1} \1_{J(k\theta_n)=x}\Bigr),\quad x\in\VV.
\ee
This is the empirical measure induced by the sequence $\{J(k\theta_n), k=1, \ldots k_n(t)-1\}$, averaged over $P_{\mu}$. 
Note that $Q_n^u$ and $\pi_n^t$ are not random in the chain $J$. 
Using these quantities, we define 
\be\label{1.16}
\nu_n^t(u,\infty)\equiv k_n(t)  \sum_{x\in \VV}\pi_n^t(x)Q_n^u(x) , 
\ee
and
\be\label{1.17}
\sigma^t_n(u,\infty)\equiv k_n(t)\sum_{x\in \VV}\pi_n^t(x) (Q_n^u(x))^2.
\ee
We are now ready to introduce the conditions of our main theorem. They are stated for given sequences $a_n,c_n,\theta_n$, a given initial distribution $\mu$, and fixed $\omega\in\Omega$.

{\bf (A-0)}
For all $u>0$ 
\be\label{a0}
\lim_{n\rightarrow\infty}\PP_{\mu}\left(Z_{n,1}^J+ Z_{n,0}^J> u\right)=0.
\ee

{\bf (A-1)}
For all $t>0$ there exists $c<\infty$ such that, uniformly in $x\in\VV$,

\be\label{a11}
\lim_{n\rightarrow\infty} \sum_{k=1}^{k_n(t)-1} P_{\mu}(J(k\theta_n)=x)=0,
\ee
and
\be\label{a12}
\lim_{n\rightarrow\infty}\sum_{k=1}^{k_n(t)-1} P_{x}(J(k\theta_n)=x)<c.
\ee

{\bf (A-2)}
There exists a sigma-finite measure $\nu$ on $(0,\infty)$ satisfying $\int_{0}^{\infty} (1\wedge x) d\nu(x)<\infty$ such that for all $t>0$ and all $u>0$ such that $\nu(\{u\})=0$,
\be
\lim_{n\rightarrow\infty}\nu_n^t(u,\infty)=t \nu(u,\infty).\label{a2}
\ee

{\bf (A-3)}
For all $t>0$ and all $u>0$ such that $\nu(\{u\})=0$,
\be
\lim_{n\rightarrow\infty}\sigma^t_n(u,\infty) = 0. \label{a3}
\ee

{\bf (A-4)}
For all $t>0$,
\be
\lim_{\varepsilon\rightarrow 0}\limsup_{n\rightarrow \infty}k_n(t)\sum_{x\in \VV}\pi_n^t(x)\EE_{x} Z^J_{n,1} \1_{\{Z^J_{n,1}\leq \varepsilon\}}=0.\label{a4}
\ee
\begin{theorem}
Assume that there exist sequences $a_n,c_n$, and $\theta_n$ and an initial distribution $\mu$ such that Conditions (A-0)-(A-4) are satisfied $\P$-a.s. Then, $\P$-a.s., as $n\rightarrow\infty$, 
\be\label{theorem111}
S^{J,b}_n\Rightarrow V_{\nu},
\ee 
where $V_{\nu}$ is a subordinator with L\'evy measure $\nu$ and zero drift. Convergence holds weakly in the space $D[0,\infty)$ equipped with Skorohod's $J_1$ topology. 
 \label{theorem1.1}
\end{theorem}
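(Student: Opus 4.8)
The plan is to deduce Theorem~\ref{theorem1.1} from the functional limit theorem of Durrett and Resnick \cite{DR78} for partial-sum processes of row-wise dependent arrays, following the blocking strategy of \cite{BG10} but now with the skeleton chain $\{J(k\theta_n)\}_{k}$ playing the role that a fast-mixing stationary chain played there. Fix $\omega$ in the $\P$-full event on which (A-0)--(A-4) hold; from now on every quantity is deterministic in the environment and all probabilities and expectations refer to the law of $J$ only. First I would reduce to a genuine array sum: by \eqref{1.13}, for all $n$ large the difference between $S^{J,b}_n$ and the process $t\mapsto\sum_{k=1}^{k_n(t)-1}Z^J_{n,k+1}$ equals the single nonnegative random variable $Z^J_{n,0}+Z^J_{n,1}$, which tends to $0$ in $\PP_\mu$-probability by (A-0) and is therefore asymptotically negligible for the $J_1$ convergence; hence it suffices to show that $t\mapsto\sum_{k=1}^{k_n(t)-1}Z^J_{n,k+1}$ converges weakly in $D[0,\infty)$ with the $J_1$ topology to $V_\nu$.

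I would then regard $(Z^J_{n,k+1})_{k\ge1}$ as an array adapted to the filtration $\FF_{n,k}=\sigma(J(s):s\le k\theta_n)$. The key structural input is the Markov property at the deterministic times $k\theta_n$: conditionally on $\FF_{n,k}$, the block increment $Z^J_{n,k+1}$ has the same law as $Z^J_{n,1}$ started from $J(k\theta_n)$. Hence the conditional tail equals $Q^u_n(J(k\theta_n))$, the conditional truncated mean equals $\EE_{J(k\theta_n)}[Z^J_{n,1}\1_{\{Z^J_{n,1}\le\varepsilon\}}]$, and the conditional truncated second moment is at most $\varepsilon$ times the latter. Thus every predictable characteristic that enters the Durrett--Resnick criterion is a \emph{function of the skeleton chain}, encoded by the maps $x\mapsto Q^u_n(x)$ and $x\mapsto\EE_x[Z^J_{n,1}\1_{\{Z^J_{n,1}\le\varepsilon\}}]$ that already appear in \eqref{1.16}--\eqref{1.17} and in (A-4).

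What must then be checked is: (i) $A_n(u):=\sum_{k=1}^{k_n(t)-1}Q^u_n(J(k\theta_n))\to t\,\nu(u,\infty)$ in $P_\mu$-probability for every continuity point $u$ of $\nu$; (ii) $\sum_{k}Q^u_n(J(k\theta_n))^2\to0$ in $P_\mu$-probability; (iii) $\sum_k\EE_{J(k\theta_n)}[Z^J_{n,1}\1_{\{Z^J_{n,1}\le\varepsilon\}}]$ and the corresponding conditional variances are negligible in the limit $\varepsilon\downarrow0$ after $\limsup_n$; (iv) the blocks are uniformly asymptotically negligible. The role of (A-1)--(A-4) is to derive these from \emph{averaged} statements. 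Introducing the occupation counts $L_n(x)=\sum_{k=1}^{k_n(t)-1}\1_{\{J(k\theta_n)=x\}}$, so that $E_\mu[L_n(x)]=k_n(t)\pi^t_n(x)$, one gets $E_\mu[A_n(u)]=\nu^t_n(u,\infty)\to t\,\nu(u,\infty)$ from (A-2) and $E_\mu[\sum_kQ^u_n(J(k\theta_n))^2]=\sigma^t_n(u,\infty)\to0$ from (A-3); the latter gives (ii) outright and bounds the diagonal of $\Var_{P_\mu}(A_n(u))$. For the off-diagonal terms I would use the Markov property to write $E_\mu[Q^u_n(J(k\theta_n))Q^u_n(J(l\theta_n))]=\sum_xP_\mu(J(k\theta_n)=x)\,Q^u_n(x)\,E_x[Q^u_n(J((l-k)\theta_n))]$, split the resulting Green-type kernel $\sum_{j\ge1}E_x[Q^u_n(J(j\theta_n))]$ into a return part $Q^u_n(x)\sum_jP_x(J(j\theta_n)=x)\le c\,Q^u_n(x)$ bounded by \eqref{a12}, plus a bulk part that cancels the product of the means up to errors killed by the uniform delocalization bound \eqref{a11}; this yields $\Var_{P_\mu}(A_n(u))\to0$ and hence (i). For (iii), nonnegativity of $Z^J_{n,1}$ turns the expectation bound (A-4) into a probability bound via Markov's inequality, and the conditional-variance bound from the second paragraph then also vanishes in the iterated limit; together these pin the Gaussian part of the limit to $0$ and its drift to $0$. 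Finally (iv) follows from (i) and (ii) in the standard way.

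Assembling these verifications, \cite{DR78} gives $\sum_{k=1}^{k_n(t)-1}Z^J_{n,k+1}\Rightarrow V_\nu$, the subordinator with L\'evy measure $\nu$ and zero drift, weakly in $D[0,\infty)$ for the $J_1$ topology; combined with the reduction of the first paragraph this is \eqref{theorem111}, and since the argument is valid for every $\omega$ in a $\P$-full set, the convergence holds $\P$-a.s. I expect the main obstacle to be the passage in the third paragraph from the averaged conditions (A-2)--(A-3) to genuine convergence in probability of the random sums $A_n(u)$: estimating the off-diagonal covariances forces one to use \emph{both} the return bound \eqref{a12} and the uniform delocalization estimate \eqref{a11}, and this is the single point where the precise form of (A-1) is indispensable.
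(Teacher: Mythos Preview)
Your proposal is correct and follows essentially the same route as the paper: reduce via (A-0), apply the Durrett--Resnick criterion (in the form of Theorem~2.1 of \cite{G10a}) to the array $(Z^J_{n,k+1})_k$, use the Markov property at the skeleton times to turn the predictable characteristics into sums of $Q^u_n(J(k\theta_n))$, and then pass from the averaged conditions (A-2)--(A-4) to convergence in probability via a second-moment estimate in which (A-3) kills the diagonal and the two halves of (A-1) kill the off-diagonal. One small technical point: in the discrete-time case your filtration $\FF_{n,k}=\sigma(J(s):s\le k\theta_n)$ does not make $Z^J_{n,k}$ measurable, since the block also depends on the exponentials $e_i$; the paper takes $\FF_{n,k}$ to be generated by $\{J(i),e_i:i\le k\theta_n\}$, and with that adjustment your argument goes through verbatim.
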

Let us emphasize that our statement is made for $S_n^{J,b}$ and holds in the strong $J_1$ topology, which immediately implies that $S_n^J$ converges to the same limit in the weaker $M_1$ topology. As we just explained (see discussion below \eqv(1.12)), in many models of interest it is not true that $S^J_n$ converges in the $J_1$ topology, but more information than contained in $M_1$ statements can be obtained by introducing a blocked clock process, $S^{J,b}_n$. (This is the case in the $p$-spin SK models \cite{BBC08}, \cite{BG10}, and BTM on $\Z^2$ \cite{BCM1}.)
 Of course in the case of continuous time clock processes, forming blocks is needed in order to make sense of writing  convergence to subordinators statements in the $J_1$ topology.
  Let us finally stress that it is crucial for applications to correlation functions to make statements that are valid in the $J_1$ topology (see the discussion below \eqref{C3}).

Let us comment on Conditions (A-0)-(A-4). Condition (A-0) is a condition on the initial distribution and ensures that the initial increment $S_n^{J,b}(0)$ converges to zero as $n\rightarrow\infty$.
 Conditions (A-2)-(A-4) have the same form as Conditions (A2-1)-(A3-1) in \cite{BG10} where sequences of finite state reversible Markov jump processes are studied. There it is assumed that $J_n$ admits a unique invariant measure, $\pi_n$, and $\theta_n$ is chosen large compared to the mixing time of $J_n$ (see Condition (A1-1)). 
In the present setting, the empirical measure averaged over $P_{\mu}$ replaces the measure $\pi_n$, and Condition (A-1) plays the same r\^ole as Condition (A1-1). More precisely these conditions allow to replace $J$ dependent, respectively $J_n$ dependent, quantities by their average over $P_\mu$, respectively $P_{\pi_n}$.
We conclude this discussion with a lemma that sheds light on the complementarity of Theorem \ref{theorem1.1} and  Theorem 1.3 in \cite{BG10};
indeed the former can only be satisfied by $J$'s that are transient or null-recurrent whereas the latter is designed for positive recurrent $J$'s.
\begin{lemma}\label{lemma1.2}
Let $x\in \VV$. If $x$ is transient then \eqref{a11} and \eqref{a12} are satisfied for any $\theta_n\gg1$, whereas if $x$ is positive recurrent they cannot be satisfied. In particular, (A-1) cannot hold if $J$ admits an invariant probability measure. 
\end{lemma}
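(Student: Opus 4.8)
\medskip
\noindent\emph{Proof sketch (plan).}
The plan is to prove the two halves separately: the transient half by exploiting finiteness of the Green function of $J$, and the positive‑recurrent half by exploiting the fact that the return probabilities of $J$ stay bounded away from $0$. Write $p_m(y,z)=P_y(J(m)=z)$ in the discrete case and $p_s(y,z)=P_y(J(s)=z)$ in the continuous one, and let $g(y,z)$ denote the associated Green function ($\sum_{m\ge0}p_m$, resp.\ $\int_0^\infty p_s\,ds$). Throughout I would use that $\theta_n\ll a_n$ forces $k_n(t)=\lfloor\lfloor a_nt\rfloor/\theta_n\rfloor\to\infty$, which is what makes both quantitative estimates below ``kick in''.

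For a transient $x$ I would first record, via a single application of the strong Markov property at the first hitting time of $x$, the bound $g(y,x)=P_y(\tau_x<\infty)\,g(x,x)\le g(x,x)<\infty$ for every $y$. In the discrete case the sampled times $\{k\theta_n:k\ge1\}$ lie in $\{m\ge\theta_n\}$, so $\sum_{k\ge1}p_{k\theta_n}(y,x)\le\sum_{m\ge\theta_n}p_m(y,x)\to0$ as $\theta_n\to\infty$, being the tail of a convergent series; this yields \eqref{a12} with limit $0$, and \eqref{a11} then follows by dominated convergence in $y$, since the $n$‑th term is bounded by the $\mu$‑summable function $y\mapsto g(y,x)\le g(x,x)$. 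In the continuous case I would first ``smear'': from $p_h(x,x)\ge e^{-\wt\lambda(x)h}$ one gets $p_s(y,x)\le e^{\wt\lambda(x)h}p_{s+h}(y,x)$ for $0\le h\le1$, hence $p_s(y,x)\le e^{\wt\lambda(x)}\int_s^{s+1}p_r(y,x)\,dr$; since the intervals $[k\theta_n,k\theta_n+1]$ are pairwise disjoint once $\theta_n\ge1$, the sum over $k$ is then bounded by $e^{\wt\lambda(x)}\int_{\theta_n}^\infty p_r(y,x)\,dr$, the tail of the convergent integral $g(y,x)$, and one concludes exactly as before.

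For a positive‑recurrent $x$, with $\pi$ the invariant probability of its communicating class, I would show that \eqref{a12} \emph{fails}, because $\sum_{k=1}^{k_n(t)-1}p_{k\theta_n}(x,x)\to\infty$. In continuous time (or discrete aperiodic time) $p_s(x,x)\to\pi(x)>0$, so once $\theta_n$ exceeds the corresponding threshold one has $p_{k\theta_n}(x,x)\ge\pi(x)/2$ for every $k\ge1$, whence the sum is at least $\tfrac12\pi(x)(k_n(t)-1)\to\infty$. In discrete time with period $d$ one has $p_{md}(x,x)\to d\pi(x)>0$, hence an $n$‑independent $M_0$ with $p_{md}(x,x)\ge d\pi(x)/2$ for $m\ge M_0$; for every $n$ the number of $k\le k_n(t)-1$ with $d\mid k\theta_n$ is at least $\lfloor(k_n(t)-1)/d\rfloor$, and for each such $k$ (and $n$ large enough that $\theta_n>M_0d$) one has $p_{k\theta_n}(x,x)\ge d\pi(x)/2$, so the sum diverges again. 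Finally, if $J$ admits an invariant probability measure $\pi$ I would pick $x$ with $\pi(x)>0$; such an $x$ must be recurrent, for otherwise summing the identity $\pi(x)=\sum_y\pi(y)p_m(y,x)$ over $m=0,\dots,N$ would force $(N+1)\pi(x)\le g(x,x)<\infty$ for all $N$, which is absurd; a recurrent state carrying invariant mass is positive recurrent, so \eqref{a12} fails for this $x$ and (A-1) cannot hold.

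The one genuinely delicate point I anticipate is the periodic discrete‑time subcase of the positive‑recurrent direction: since $p_m(x,x)$ does not converge there, only its restriction to multiples of the period does, one must carefully identify which of the sampled times $k\theta_n$ fall into the ``good'' residue class modulo $d$ and check that the threshold $M_0$ can be chosen uniformly in $n$ (so that, for all large $n$, every admissible $k\ge1$ already lies beyond it). The remaining ingredients — the strong Markov identity for $g$, the smearing estimate in the continuous case, and the dominated‑convergence step for \eqref{a11} — I expect to be entirely routine.
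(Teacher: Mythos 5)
Your argument is correct and follows essentially the same route as the paper's proof: the transient case via the same smearing bound $p_s(y,x)\le e^{\wt\lambda(x)}\int_s^{s+1}p_r(y,x)\,dr$ (the paper's \eqref{2.23}), which turns the sampled sum into a tail of the finite Green integral, and the positive-recurrent case via the fact that $P_x(J(t)=x)$ tends to a strictly positive limit while $k_n(t)\to\infty$. The only differences are cosmetic: the paper treats just the continuous-time case explicitly and cites Norris (Theorems 1.8.3 and 3.5.3) for the facts you rederive by hand, such as the periodic discrete subcase and the passage from an invariant probability measure to a positive-recurrent state.
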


When $J$ is random in the random environment the conditions of Theorem \ref{theorem1.1} may not be easy to handle. We now present an additional condition, (B-5), that enables us to replace $\pi_n^t$ in (A-2)-(A-4) by a deterministic probability measure $\overline \pi_n^t$. In this way, all the dependence on the random environment in (A-2)-(A-4) is confined to the $Q_n^u$'s. The following conditions, stated for given sequences $a_n,c_n,\theta_n$, a given initial distribution $\mu$, and for fixed $\omega\in \Omega$, imply the conditions of Theorem \ref{theorem1.1}.

\noindent{\bf (B-5)} 
Set ${\cal A}_n=\{(x,k): \ x\in \VV, \ k\in [k_n(t)-1]\}$, where $[m]\equiv\{0,\ldots, m\}$. There exists a sequence of functions $h_n:\VV\rightarrow[0,1]$ such that for all $t>0$ and all $n\in \N$, the set ${\cal A}_n$ can be decomposed into the disjoint union of two sets, ${\cal A}_n^1$ and ${\cal A}_n^2$, satisfying
\be
\lim_{n\rightarrow\infty}\sup_{(x,k)\in {\cal A}_n^1} \frac{|P_{\mu}(J(k\theta_n)=x))-h_{k\theta_n}(x)|}{h_{k\theta_n}(x)}=0,\label{b51}
\ee
and 
\bea
\hspace{-2mm}&&\hspace{-2mm}\lim_{n\rightarrow\infty}\sum_{(x,k)\in {\cal A}_n^2} \bigl|P_{\mu}(J(k\theta_n)=x)-h_{k\theta_n}(x)\bigr| Q_n^u(x)=0,\label{b52}\\
\hspace{-2mm}&&\hspace{-2mm}\lim_{n\rightarrow\infty}\sum_{(x,k)\in {\cal A}_n^2} \bigl|P_{\mu}(J(k\theta_n)=x)-h_{k\theta_n}(x)\bigr| \EE_{x} Z^J_{n,1} \1_{\{Z^J_{n,1}\leq \varepsilon\}}=0.\label{b53}\quad \quad
\eea
Observe that proving \eqref{b51} corresponds to proving a uniform local central limit theorem for $J$.

For each $t>0$ we define the measure $\overline\pi_n^t$, using $h_n$, through
\be\label{1.26a}
\overline \pi_n^t(x)= (k_n(t))^{-1}\sum_{k=1}^{k_n(t)-1} h_{k\theta_n}(x), \quad x\in \VV.
\ee
By analogy with \eqref{1.16} and \eqref{1.17} we set for $t>0$, $u>0$
\bea\label{1.26}
&&\overline\nu_n^t(u,\infty)\equiv k_n(t)  \sum_{x\in \VV}\overline \pi_n^t(x)Q_n^u(x),\\
&&\overline\sigma^t_n(u,\infty)\equiv k_n(t)  \sum_{x\in \VV}\overline \pi_n^t(x)(Q_n^u(x))^2.\label{1.27}
\eea

The next conditions are nothing but Conditions (A-2)-(A-4) with $\pi_n^t$ replaced by $\overline\pi_n^t$.

{\bf (B-2)}
There exists a sigma-finite measure $\nu$ on $(0,\infty)$ satisfying $\int_{0}^{\infty} (1\wedge x) d\nu(x)<\infty$ such that for all $t>0$ and all $u>0$ such that $\nu(\{u\})=0$,
\be
\lim_{n\rightarrow\infty}\overline\nu_n^t(u,\infty)= t \nu(u,\infty).\label{b2}
\ee

{\bf (B-3)} For all $t>0$ and all $u>0$ such that $\nu(\{u\})=0$,
\be
\lim_{n\rightarrow\infty}\overline\sigma_n^t(u,\infty)= 0. \label{b3}
\ee

{\bf (B-4)} 
For all $t>0$,
\be
\lim_{\varepsilon\rightarrow 0}\limsup_{n\rightarrow \infty}k_n(t)\sum_{x\in \VV}\overline\pi_n^t(x)\EE_{x} Z^J_{n,1} \1_{\{Z^J_{n,1}\leq \varepsilon\}}=0.\label{b4}
\ee

\begin{theorem}
Assume that there exist sequences $a_n,c_n$, and $\theta_n$ and an initial distribution $\mu$ such that Conditions (A-0), (A-1), (B-2)-(B-5) are satisfied $\P$-a.s. Then, $\P$-a.s., 
\be\label{theorem131}
S^{J,b}_n\Rightarrow V_{\nu},
\ee
where $V_{\nu}$ is a subordinator with L\'evy measure $\nu$ and zero drift. Convergence holds weakly in the space $D[0,\infty)$ equipped with Skorohod's $J_1$ topology. 
 \label{theorem1.3}
\end{theorem}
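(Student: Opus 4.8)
The plan is to deduce Theorem \ref{theorem1.3} from Theorem \ref{theorem1.1}, so the whole task reduces to showing that Conditions (B-2)--(B-5), together with (A-0) and (A-1), imply Conditions (A-2)--(A-4). The point of (B-5) is that it controls the discrepancy between the genuine (random in $J$) empirical measure $\pi_n^t$ and its deterministic proxy $\overline\pi_n^t$, both being convex combinations (weighted by $k_n(t)^{-1}$) of the one-dimensional marginals $P_\mu(J(k\theta_n)=\cdot)$ respectively the surrogate densities $h_{k\theta_n}(\cdot)$. I would begin by recalling the defining identities $\pi_n^t(x)=k_n(t)^{-1}\sum_{k=1}^{k_n(t)-1}P_\mu(J(k\theta_n)=x)$ and \eqref{1.26a}, so that for any nonnegative test function $g:\VV\to[0,\infty)$ one has the exact bound
\be
k_n(t)\Bigl|\sum_{x\in\VV}\pi_n^t(x)g(x)-\sum_{x\in\VV}\overline\pi_n^t(x)g(x)\Bigr|\le \sum_{k=1}^{k_n(t)-1}\sum_{x\in\VV}\bigl|P_\mu(J(k\theta_n)=x)-h_{k\theta_n}(x)\bigr|\,g(x).
\ee

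The second step is to split the double sum on the right according to the decomposition ${\cal A}_n={\cal A}_n^1\sqcup{\cal A}_n^2$ provided by (B-5). On ${\cal A}_n^2$ we invoke \eqref{b52} (with $g=Q_n^u$) and \eqref{b53} (with $g=\EE_\cdot Z^J_{n,1}\1_{\{Z^J_{n,1}\le\varepsilon\}}$) directly: those two limits are exactly the statement that the ${\cal A}_n^2$-contribution to the difference of the $\nu$-type quantity and of the truncated-mean quantity vanishes. On ${\cal A}_n^1$ we use \eqref{b51}: since the relative error $|P_\mu(J(k\theta_n)=x)-h_{k\theta_n}(x)|/h_{k\theta_n}(x)$ tends to $0$ uniformly over $(x,k)\in{\cal A}_n^1$, the absolute error on that set is bounded by $\eta_n\,h_{k\theta_n}(x)$ with $\eta_n\to0$, whence the ${\cal A}_n^1$-contribution is at most $\eta_n\sum_{k}\sum_x h_{k\theta_n}(x)g(x)=\eta_n\,k_n(t)\sum_x\overline\pi_n^t(x)g(x)$. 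Taking $g=Q_n^u$ and $g=(Q_n^u)^2$ (for which $\overline\nu_n^t$ and $\overline\sigma_n^t$ are uniformly bounded in $n$ by (B-2)--(B-3)), and $g=\EE_\cdot Z^J_{n,1}\1_{\{Z^J_{n,1}\le\varepsilon\}}$ (bounded by (B-4)), this ${\cal A}_n^1$-contribution vanishes as $n\to\infty$. Combining the two pieces yields $\lim_n|\nu_n^t(u,\infty)-\overline\nu_n^t(u,\infty)|=0$, $\lim_n|\sigma_n^t(u,\infty)-\overline\sigma_n^t(u,\infty)|=0$, and the analogous statement for the truncated first moments, so that (B-2) gives (A-2), (B-3) gives (A-3), and (B-4) gives (A-4), each at every continuity point $u$ of $\nu$.

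With (A-0)--(A-4) in force $\P$-a.s., Theorem \ref{theorem1.1} applies verbatim and delivers $S^{J,b}_n\Rightarrow V_\nu$ in the $J_1$ topology, which is precisely \eqref{theorem131}. The only subtlety I anticipate is a bookkeeping one rather than a conceptual obstacle: one must make sure the comparison argument for the truncated-mean term in (A-4) respects the order of quantifiers there (first $\limsup_n$, then $\lim_{\varepsilon\to0}$). This is handled by performing the whole split-and-compare step at fixed $\varepsilon>0$ — the bounds on the ${\cal A}_n^1$ part via (B-4) and on the ${\cal A}_n^2$ part via \eqref{b53} are each uniform enough that one may pass to $\limsup_n$ at fixed $\varepsilon$, reducing (A-4) to (B-4) before letting $\varepsilon\to0$. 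A minor additional point is that (B-2)--(B-3) must be known to give not merely convergence of $\overline\nu_n^t$ and $\overline\sigma_n^t$ but their boundedness in $n$; convergence to a finite limit of course supplies this. No genuinely hard estimate occurs here — the content of the theorem is entirely front-loaded into verifying (B-5) (a uniform local CLT for $J$, as the text notes) in concrete models, not into this deduction.
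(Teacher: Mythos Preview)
Your proposal is correct and follows essentially the same approach as the paper: reduce to Theorem~\ref{theorem1.1} by showing that (B-2)--(B-5) imply (A-2)--(A-4), compare $\pi_n^t$ with $\overline\pi_n^t$ via the decomposition ${\cal A}_n={\cal A}_n^1\cup{\cal A}_n^2$, use the relative-error bound \eqref{b51} on ${\cal A}_n^1$ together with boundedness of $\overline\nu_n^t$, $\overline\sigma_n^t$, and the truncated mean, and use \eqref{b52}--\eqref{b53} on ${\cal A}_n^2$. The only step you leave implicit is that for $g=(Q_n^u)^2$ the ${\cal A}_n^2$-contribution is controlled by \eqref{b52} via $(Q_n^u)^2\le Q_n^u$; the paper makes this explicit, but it is immediate.
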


The following lemma is instrumental in verifying the conditions of Theorem \ref{theorem1.1} or Theorem \ref{theorem1.3}. 
\begin{lemma}\label{ergthmzdcor} Let $\nu$ be a sigma finite measure on $(0,\infty)$. Suppose that for given $a_n,c_n,\theta_n, \mu$ and fixed $t>0$, $u>0$ there exists $\Omega^{\tau}(u,t)\subseteq \Omega$ with $\P(\Omega^{\tau}(u,t))=1$ such that, on $\Omega^{\tau}(u,t)$, (A-0)-(A-4), respectively (B-2)-(B-5), are verified. Then, for these sequences and this initial distribution (A-0)-(A-4), respectively (B-2)-(B-5), are satisfied $\P$-a.s.~for all $t>0$, $u>0$.
\end{lemma}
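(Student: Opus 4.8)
The statement to prove is Lemma~\ref{ergthmzdcor}: that verifying Conditions (A-0)--(A-4) (resp.\ (B-2)--(B-5)) on a full-measure set depending on the parameters $u,t$ upgrades to $\P$-a.s.\ verification for \emph{all} $u,t$ simultaneously. The plan is to exploit the fact that each of the conditions is really a statement about a \emph{countable} family of limits together with a monotonicity/continuity argument in the continuous parameters $u$ and $t$. First I would observe that in each condition the quantities $\nu_n^t(u,\infty)$, $\sigma_n^t(u,\infty)$, $\overline\nu_n^t(u,\infty)$, $\overline\sigma_n^t(u,\infty)$, as well as the truncated-mean expression in (A-4)/(B-4), are monotone in $u$ (non-increasing) and the limit measure $t\nu(u,\infty)$ is right-continuous with at most countably many atoms. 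Hence it suffices to control the limits along a countable dense set of $u$'s (avoiding the atoms of $\nu$) and a countable dense set of $t$'s, e.g.\ $u,t\in\Q_{>0}$ with $\nu(\{u\})=0$.

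The key steps, in order, are as follows. (1)~Fix a countable set $D=\{(u,t):u,t\in\Q_{>0},\ \nu(\{u\})=0\}$; for each $(u,t)\in D$ intersect the full-measure sets $\Omega^\tau(u,t)$ supplied by hypothesis to obtain $\Omega^*=\bigcap_{(u,t)\in D}\Omega^\tau(u,t)$, which still has $\P(\Omega^*)=1$ since $D$ is countable. (2)~On $\Omega^*$, the conclusions of (A-2)/(A-3) (resp.\ (B-2)/(B-3)) hold for every rational pair $(u,t)$; extend to arbitrary $t>0$ and arbitrary continuity-point $u$ of $\nu$ by a standard sandwiching argument: pick rationals $t'<t<t''$ and $u'<u<u''$ with the appropriate continuity, use monotonicity of $\nu_n^t$ in $t$ (inherited from $k_n(t)$ being non-decreasing) and in $u$ to squeeze $\limsup_n$ and $\liminf_n$ between $t'\nu(u'',\infty)$ and $t''\nu(u',\infty)$, then let the rationals converge. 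The same squeeze handles $\sigma_n^t$ and $\overline\sigma_n^t$ (which also vanish in the limit, so only an upper bound is needed) and the $\varepsilon\to0$ double limit in (A-4)/(B-4), where one additionally notes the expression is monotone in $\varepsilon$ so a countable sequence $\varepsilon_j\downarrow0$ suffices. (3)~For (A-0), (A-1), and (B-5), these conditions do not involve the parameter $u$ at all (or involve it only through $Q_n^u$, which for (B-5) can again be reduced to a countable family by monotonicity), and involve $t$ only through $k_n(t)$; their full-measure sets can be thrown into $\Omega^*$ directly, and the $t$-dependence is handled by the same rational-approximation argument or is already uniform.

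**The main obstacle.** I expect the delicate point to be the uniform-in-$x$ clauses, namely \eqref{a11} and the ``uniformly in $x\in\VV$'' qualifier in (A-1), and similarly the $\sup_{(x,k)\in{\cal A}_n^1}$ in \eqref{b51}: one must be careful that the sandwiching in $t$ (replacing $k_n(t)$ by $k_n(t')$ or $k_n(t'')$) does not destroy uniformity over the infinite vertex set $\VV$. This is in fact harmless because $P_\mu(J(k\theta_n)=x)\le 1$ and $P_x(J(k\theta_n)=x)\le 1$, so enlarging or shrinking the range of the sum by a bounded number of terms changes things uniformly; the monotonicity of $k_n(\cdot)$ makes the comparison clean. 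A second, more bookkeeping-level obstacle is making sure that the \emph{same} null set works for (A-0)--(A-4) and, separately, for (B-2)--(B-5): since the hypothesis already supplies, for each $(u,t)$, a single $\Omega^\tau(u,t)$ on which the \emph{whole} relevant block of conditions holds, this is automatic once we take the countable intersection. No genuinely hard analysis is needed; the content is entirely the standard reduction of a continuum of a.s.\ statements to a countable one via monotonicity and right-continuity of the limit.
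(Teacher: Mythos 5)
Your proposal is correct and follows essentially the same route as the paper: intersect the full-measure sets over a countable dense set of parameters $(u,t)$ avoiding the atoms of $\nu$, then extend by the monotonicity of $k_n(t)\pi_n^t(x)$ in $t$ and of the tail quantities in $u$, together with continuity of the limiting expressions. The paper's own proof is just a terser version of your sandwiching argument, so no further comparison is needed.
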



\subsection{Application to Bouchaud's asymmetric trap model (BATM)}\label{S1.3}
We now use Theorem \ref{theorem1.3} to prove aging in Bouchaud's asymmetric trap model on $\Z^d$.
Here $\VV=\Z^d$, $d\geq 2$, $\LL$ is the set of nearest neighbors on $\Z^d$, and $\mu\equiv\delta_0$. The random environment, $\left\{\tau(x), x\in \mathbb{Z}^d\right\}$, is a collection of i.i.d.~random variables, with tail distribution given by
\bea
\P(\tau(0)>u)=
\begin{cases}
C u^{-\alpha} (1+L(u)), &\quad  u\in( \bar c,\infty),\\
1, & \quad u\in(0,\bar c],
\end{cases}
\eea
where $\a\in(0,1)$, $\bar c, C\in (0,\infty)$ are constants, and $L:(0,\infty)\rightarrow \R$ is a function that obeys $L(u)\rightarrow 0$ as $u\rightarrow\infty$. We write $x\sim y$ if $x,y$ are nearest neighbors in $\Z^d$. The jump rates of $X$ depend on a parameter, $\theta\in[0,1]$, and are given by 
\bea\label{1.37}
\lambda(x,y)=
\textstyle{(\t(x))^{-1}(\tau(x)\t(y))^{\theta}}, \quad \mbox{ if } x\sim y,
\eea
and zero else. Consider now the VSRW of this model, namely, the continuous time Markov chain, $J$, with jump rates 
\bea\label{1.41}
\wt{\lambda}(x,y)=
\textstyle{(\tau(x)\tau(y))^{\theta}}, \quad \mbox{ if } x\sim y,
\eea
and zero else. Our interest is in the continuous time clock process $S^{J}$ defined (as in \eqref{1.9}) through
\bea\label{1.42}
\textstyle{S^{J}(t)=\sum_{x\in \VV}  {\ell}_t(x) \wt \lambda(x) \lambda^{-1}(x) = \sum_{x\in \VV}  {\ell}_t(x) \tau(x).}
\eea
Our first theorem states convergence of the blocked clock process, $S_n^{J,b}$, for appropriate choices of block lengths, $\theta_n$, in the $J_1$ topology.
\begin{theorem}\label{theorem1.5}
Let $c_n=n$ and take
\bea\label{1.44}
\theta_n &=&n^{\a\gamma_2}\1_{d=2} + (\log n) ^{\gamma_3}\1_{d\geq 3},\\
a_n&=&
n^{\alpha}(\log n)^{1-\alpha}\1_{d=2} +n^{\alpha} \1_{d\geq 3},\label{1.45}
\eea
where $\gamma_2\in(0,1/6), \gamma_3>12/(1-\a)$. Then, $\P$-a.s., as $n\rightarrow\infty$, 
\be
S^{J,b}_n\Rightarrow V_{\a},
\ee 
where $V_{\a}$ is a subordinator with L\'evy measure $\nu(u,\infty)= {\cal K} u^{-\alpha}$, for ${\cal K}={\cal K}(d,\alpha,\theta)>0$, and zero drift. Convergence holds weakly in the space $D[0,\infty)$ equipped with Skorohod's $J_1$ topology. 
\end{theorem}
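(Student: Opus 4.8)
The plan is to derive Theorem \ref{theorem1.5} from Theorem \ref{theorem1.3} by exhibiting, for the choices \eqref{1.44}--\eqref{1.45} of $\theta_n, a_n$ and $c_n=n$, the sequence of functions $h_n$ and verifying Conditions (A-0), (A-1), (B-2)--(B-5) $\P$-almost surely, with limiting L\'evy measure $\nu(u,\infty)={\cal K}u^{-\alpha}$. The natural candidate for $h_{k\theta_n}$ is the transition kernel of the \emph{annealed} (averaged over $\tau$) VSRW, or more simply the heat kernel of the limiting Brownian motion: by the quenched invariance principle and local CLT for the VSRW on i.i.d.\ positive conductances (as in \cite{BaDe10}, \cite{ABDH}; note here $\theta\in[0,1]$ so conductances $(\tau(x)\tau(y))^\theta$ are bounded below and have the requisite moments since $\alpha\in(0,1)$ controls only upper tails and $\tau\geq\bar c$), one expects $P_\mu(J(k\theta_n)=x)\approx (c_\ast k\theta_n)^{-d/2}\exp(-|x|^2/(2c_\ast k\theta_n))$ uniformly on the diffusive range $|x|\lesssim \sqrt{k\theta_n\log(k\theta_n)}$, which is exactly the content required for \eqref{b51} on the bulk set ${\cal A}_n^1$; the complementary set ${\cal A}_n^2$ collects the large-deviation tail $|x|$ much larger than $\sqrt{k\theta_n}$, where both $P_\mu$ and $h$ are super-polynomially small and \eqref{b52}--\eqref{b53} follow from Gaussian/sub-Gaussian heat-kernel upper bounds together with the crude bound $Q_n^u(x)\leq 1$ and $\EE_x Z^J_{n,1}\1_{\{Z^J_{n,1}\leq\varepsilon\}}\leq \varepsilon$. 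Condition (A-1) — $\sum_{k=1}^{k_n(t)-1}P_\mu(J(k\theta_n)=x)\to 0$ uniformly in $x$ and $\sum_k P_x(J(k\theta_n)=x)$ bounded — is the transience/null-recurrence input of Lemma \ref{lemma1.2}: for $d\geq 3$ the VSRW is transient and these hold for any $\theta_n\gg1$; for $d=2$ it is recurrent but the diagonal decay $P_x(J(m)=x)\asymp 1/m$ and the spacing $\theta_n=n^{\alpha\gamma_2}$ growing polynomially make the sum $\sum_k 1/(k\theta_n)\asymp (\log k_n(t))/\theta_n\to0$, while the off-diagonal $P_\mu(J(k\theta_n)=x)\lesssim 1/(k\theta_n)\to0$ uniformly.

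The heart of the matter is Condition (B-2), the convergence $k_n(t)\sum_x\overline\pi_n^t(x)Q_n^u(x)\to t{\cal K}u^{-\alpha}$, which requires understanding the tail $Q_n^u(x)=\PP_x(Z^J_{n,1}>u)$ of a single block increment $Z^J_{n,1}=n^{-1}\sum_y \tau(y)(\ell_{\theta_n}(y)-\ell_0(y))$. The mechanism — as in \cite{BCM1}, \cite{BC07}, \cite{Mou11} — is that a block of length $\theta_n$ produces an increment of order $u$ when the VSRW, during time $\theta_n$, accumulates enough local time at a deep trap $\tau(y)\gtrsim n$. One then writes $Q_n^u(x)$ as a sum over the starting trap's neighborhood of the probability that the walk finds a trap of depth $\approx n\,v$ and lingers there long enough; using the i.i.d.\ structure and the regularly varying tail $\P(\tau>nv)\sim C(nv)^{-\alpha}$, a Borel--Cantelli / second-moment argument over the (spatially sparse) deep traps shows $k_n(t)\sum_x\overline\pi_n^t(x)Q_n^u(x)$ concentrates around its $\P$-mean, which is computed to be $t{\cal K}u^{-\alpha}$ for an explicit ${\cal K}(d,\alpha,\theta)$ involving the Green's function (equivalently the expected local time functional) of the VSRW and the constant $C$. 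The choice $a_n=n^\alpha$ (times $(\log n)^{1-\alpha}$ in $d=2$) is dictated precisely by balancing $k_n(t)=a_nt/\theta_n$ against the density $\theta_n\cdot\P(\tau\gtrsim n)\asymp\theta_n n^{-\alpha}$ of relevant deep traps visited per block. Condition (B-3), $\overline\sigma^t_n\to0$, is the analogous second-moment statement showing no single block dominates — it follows because $k_n(t)\sum_x\overline\pi_n^t(x)(Q_n^u(x))^2\lesssim \max_x Q_n^u(x)\cdot\overline\nu_n^t(u,\infty)$ and $\max_x Q_n^u(x)\to0$ (a single block of polylog/polynomial-but-$o(a_n)$ length cannot by itself create a macroscopic increment with non-vanishing probability). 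Condition (B-4) (truncated first moments vanish) and (A-0) (the initial increment, zero here since $J$ is continuous time so $\ell_0\equiv0$ and $Z^J_{n,0}=0$, and $Z^J_{n,1}$ with $\theta_n\ll a_n$ is negligible by (B-2)) are comparatively routine; (B-4) reduces to showing $k_n(t)\sum_x\overline\pi_n^t(x)\EE_x Z^J_{n,1}\1_{\{Z^J_{n,1}\leq\varepsilon\}}$ is, up to $o(1)$, $t\int_0^\varepsilon v\,d\nu(v) = t{\cal K}\frac{\alpha}{1-\alpha}\varepsilon^{1-\alpha}\to0$ as $\varepsilon\to0$, which is an integration-by-parts consequence of the (B-2)-type estimate applied at all levels $u\leq\varepsilon$ uniformly, again by concentration.

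I expect the main obstacle to be the quantitative control of $Q_n^u(x)$ — i.e.\ the single-block tail estimate — uniformly enough in the starting point $x$ and with strong enough error terms (together with their $\P$-concentration over the environment) to push through the $\P$-a.s.\ statement rather than merely convergence in $\P$-probability. In $d=2$ this is genuinely delicate: the VSRW is recurrent, local times at a given site over a block of length $\theta_n=n^{\alpha\gamma_2}$ are of order $\log\theta_n$, the trap depths that matter are of order $n$, and one must carefully track the interplay of the logarithmic local-time fluctuations with the polynomial rescaling, which is exactly why $a_n$ carries the extra $(\log n)^{1-\alpha}$ factor and why $\gamma_2$ must be small ($<1/6$) — small blocks keep the walk from "mixing over" too many traps and spoiling the i.i.d.\ Borel--Cantelli argument, but must still be large enough ($\gg1$, ensuring many independent blocks and decorrelation). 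The constraint $\gamma_3>12/(1-\alpha)$ in $d\geq3$ similarly buys enough blocks for concentration while the transient Green's function makes the trap-counting clean. Verifying (B-5) — effectively a uniform local CLT with multiplicative (not additive) error on the diffusive scale and with the exceptional-set decomposition tuned so that the errors are killed after weighting by $Q_n^u$ and by truncated first moments — is the second substantial technical ingredient, but it can be imported wholesale from the random-conductance literature \cite{BaDe10}, \cite{ABDH} once one checks the moment conditions on $(\tau(x)\tau(y))^\theta$, which hold since these conductances are bounded below by $\bar c^{2\theta}$ and the upper tail $\P((\tau(x)\tau(y))^\theta>s)$ decays like $s^{-\alpha/\theta}$ (polynomially, sufficing for the Barlow--Deuschel hypotheses).
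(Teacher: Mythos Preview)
Your high-level strategy matches the paper's: apply Theorem \ref{theorem1.3} with $h_n(x)=\E q_n(0,x)$, verify (A-0)--(A-1) from the heat-kernel decay, verify (B-5) from the quenched local CLT of \cite{BaDe10}, and reduce (B-2)--(B-4) to computing and concentrating the single-block tail $Q_n^u$. You also correctly identify the bottleneck as the quantitative control of $Q_n^u$ and its $\P$-a.s.\ concentration.

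There are, however, three genuine gaps in the sketch. First, you omit the reduction to the \emph{large-trap} process. The paper does not analyse $Q_n^u(x)=\PP_x(Z_{n,1}^J>u)$ directly; it first replaces $S_n^{J,b}$ by $\overline S_n^{J,b}$ in which only sites $y\in T_n=\{\gamma_n(y)>\epsilon_n,\ \max_{y'\sim y}\tau(y')\leq \epsilon_n^{-2/\alpha}\}$ contribute (Lemma \ref{lemma3.5}). This truncation is essential: it guarantees that within a block at most one trap contributes, and---crucially for $\theta>0$---that the hitting probability $P(\sigma(y)\leq\theta_n)$ and the mean local time $g_{B_n^i}(y)$ can be decoupled from $\gamma_n(y)$ (via the approximations \eqref{4.17}, \eqref{4.20}) before integrating out $\gamma_n(y)$. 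Without it your plan to ``use the i.i.d.\ structure and the regularly varying tail'' cannot be executed, since for the asymmetric model the local time at $y$ depends on $\tau(y)$ itself.

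Second, your concentration argument is too weak in $d=2$. A second-moment/Borel--Cantelli argument works for $d\geq 3$ (Lemma \ref{lemma4.2}, \eqref{lemma421}), but in $d=2$ the variance bound on $\nu_n^t$ is only of order $(\log\theta_n)^{-1}$, which is not summable along any subsequence giving the full $\P$-a.s.\ statement. The paper upgrades to a fourth-moment bound \eqref{lemma4211} in $d=2$, and this is where the delicate spatial decorrelation estimates (the intersection-range bounds \eqref{4.82}--\eqref{4.84}) enter.

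Third, your argument for (B-3), ``$\max_x Q_n^u(x)\to 0$'', is not justified and in fact is the wrong mechanism. Without the $T_n$ truncation there is no a priori reason that $\sup_x Q_n^u(x)\to 0$ $\P$-a.s.\ (a starting point adjacent to a trap of depth $\gg n$ would spoil it). The paper instead bounds $\E(\sum_y Q_n^u(0,y))^2$ directly (Lemma \ref{lemma4.3}) and shows the weighted sum $\wt\sigma_n^t$ is small in expectation and concentrates; the gain comes from the $k$th-moment bound $\E(Q_n^u)^k\leq \rho_N(d,k)$ with an extra factor $(\log\theta_n)^{-k+\alpha}$ in $d=2$, not from a uniform bound on $Q_n^u$.
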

All earlier papers dealing with the clock process $S_n^{J}$ focused on proving scaling limits for BATM. It was first proved in \cite{BaCe11} that the properly rescaled process converges to a fractional kinetics process for $d\geq 3$. This was extended to $d=2$ in \cite{Ce11}. Shortly after \cite{BaCe11}, \cite{Mou11} gave an alternative proof of this result for $d\geq 5$. The method of \cite{BaCe11,Ce11} relies on blocking with block length $\theta_n=\e n^{\a}$. In contrast, \cite{Mou11} proposed a method of proof that does not use blocking. Both approaches resulted in $M_1$ convergence for $S_n^{J}$. (Note that because $S^{J}$ is a continuous time clock process, the method of \cite{Mou11} does not allow to obtain $J_1$ convergence statements for $S^{J}$.)

Let us comment on our choices of $\theta_n$. Because $J$ is recurrent  when $d=2$ and  transient otherwise
two cases  must be distinguished. When $d=2$ we first remark that (A-1) would be satisfied for any $\theta_n\gg\log n$. There, our constraint on $\theta_n$ comes from (A-2)-(A-4). In the course of verifying these conditions one sees that $\theta_n$ must be chosen in such a way that the mean values of local times in the time interval $[0,\theta_n]$ are of the order of  $\log n$.
Since these mean values are of order $\log\theta_n$ we take $\theta_n=n^{\a\gamma_2}$. 
When $d\geq 3$ Conditions (A-1)-(A-4) can a priori be verified for any diverging $\theta_n$. Here, the constraint \eqv(1.44) on  $\theta_n$ comes from using precise heat kernel estimates for $J$, taken from \cite{BaDe10}, which are only valid for large enough time intervals (of course this was already the case in $d=2$).

We now present our results on aging. Theorem \ref{theorem1.5} allows to control several correlation functions, which we now introduce. The first is the classical correlation function 
\be\label{C1}
{\cal C}_s^1(1,\rho)\equiv \textstyle{\PP\left(X(s)=X(s(1+\rho))\right),\quad s>0,\ \rho>0},
\ee
which is the probability that at the beginning and the end of the time interval $(s,s(1+\rho))$ the process is in the same site. The second correlation function is the probability that during a certain time interval the process stays inside a ball of a certain radius. Specifically, writing $\theta_s\equiv\theta_{\lfloor s\rfloor}$,
\be
\label{C2}
{\cal C}_s^2(1,\rho)\equiv \textstyle{\PP\left(\max_{v\in (s,s(1+\rho))}|X(s)-X(v)|\leq (\theta_{s}\log\theta_{s})^{1/2}\right),\  s> 0,\ \rho> 0}.
\ee
Notice that ${\cal C}^1_s$ and ${\cal C}^2_s$ clearly contain different information. Our third and last correlation function combines them both. For $s>0$, $\rho>0$ we define
\be\label{C3}
{\cal C}_s^3(1,\rho)\equiv \PP\Bigl(X(s)=X(s(1+\rho)),\max_{v\in (s,s(1+\rho))}|X(s)-X(v)|\leq (\theta_{s}\log\theta_{s})^{1/2} \Bigr).
\ee
The proof of the next theorem relies on a well-known scheme, that goes back to 
\cite{BC06b}, that links aging to the arcsine law for subordinators through the convergence of the clock process $S_n^{J,b}$. In this scheme, one aims at deducing convergence of correlation functions from convergence of the overshoot function of the blocked clock process, $\chi(S_n^{J,b})$. (The overshoot function of $Y\in D[0,\infty)$ is given by  $\chi_u(Y)=Y({\cal L}_u(Y))-u$, $u>0$, where ${\cal L}_u$ is the first passage time (see \eqref{os} for a definition).~)
For this, one needs $\chi$ to be continuous with respect to the topology in which $S_n^{J,b}$ converges, and because this is only true in the $J_1$ topology, it is all important that  $S_n^{J,b}$ converges in that topology.

Let $\asl_\a$ denote the distribution function of the generalized arcsine law, 
\be\label{5.1}
\textstyle{\asl_\a(u)\equiv\frac{\sin \a \pi}{\pi} \int_0^{u} (1-x)^{-\a} x^{\a-1} dx, \quad u\in[0,1].}
\ee
\begin{theorem}\label{theorem1.4}
Let $d\geq 2$. Under the assumptions of Theorem \ref{theorem1.5}, for $i=1,2,3,$ $\P$-a.s.,
\be\label{1.39}
\lim_{s\rightarrow\infty}{\cal C}^i_s(1,\rho) =  \asl_{\a}(1/(1+\rho)), \quad \rho>0.
\ee
\end{theorem}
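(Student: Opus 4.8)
The plan is to deduce the aging statement \eqref{1.39} from the convergence of the blocked clock process $S_n^{J,b}\Rightarrow V_\a$ (Theorem \ref{theorem1.5}) via the classical arcsine scheme of \cite{BC06b}. First I would establish the key probabilistic identity relating each of the three correlation functions to an event about the clock process. For ${\cal C}_s^1$, the process $X$ at times $s$ and $s(1+\rho)$ sits at the same site if and only if, during the time interval $(s,s(1+\rho))$, the underlying chain $J$ does not leave the site it occupies at time $s$; translated through the time change \eqref{1.3a}, this says that the first jump of the clock process $S^J$ after $S^{J,\leftarrow}(s)$ overshoots the level $s(1+\rho)$ — i.e.\ it is governed by the overshoot (and undershoot) of $S^J$ at level $s$ relative to $s(1+\rho)$. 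I would make this precise by writing, with $c_n$ and $a_n$ the scaling sequences and $s = a_n$ roughly playing the role of a continuous parameter, that ${\cal C}^1_s(1,\rho)$ equals (up to negligible error) the probability that the rescaled blocked clock process has a single block carrying the whole passage from level $1$ past level $1+\rho$. The same holds for ${\cal C}^2_s$ with the geometric event ``$X$ stays in a ball of radius $(\theta_s\log\theta_s)^{1/2}$'' encoding exactly the statement that $J$ does not move to a \emph{different block} during that time window — which is why the radius is tuned to the block length $\theta_s$ — and ${\cal C}^3_s$ is the intersection, which under the blocking picture coincides asymptotically with the other two.

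Second, I would pass to the scaling limit. By Theorem \ref{theorem1.5}, $S_n^{J,b}\Rightarrow V_\a$ in the $J_1$ topology, $\P$-a.s. The crucial analytic input is that the overshoot functional $\chi_u$, together with the first-passage time ${\cal L}_u$, is continuous (at the relevant sample paths) with respect to $J_1$, as emphasized in the discussion below \eqref{C3}; this is precisely where the $J_1$ statement — rather than a mere $M_1$ statement — is indispensable, because $\chi_u$ is \emph{not} $M_1$-continuous. Applying the continuous mapping theorem, the rescaled pair $(S_n^{J,b}({\cal L}_1(S_n^{J,b})^-),\,S_n^{J,b}({\cal L}_1(S_n^{J,b})))$ converges in law to the corresponding undershoot/overshoot pair of $V_\a$ at level $1$. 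Since $V_\a$ is an $\a$-stable subordinator, the classical Dynkin--Lamperti arcsine theorem gives that the ratio of undershoot to level at level $1$ — equivalently, the position $1/(1+\rho)$ relative to the last jump straddling $1$ — has law with distribution function $\asl_\a$. Concretely, $\PP(V_\a(t^-)\le 1,\ V_\a(t)>1+\rho \text{ for the straddling jump at } 1) \to \asl_\a(1/(1+\rho))$, which is exactly the right-hand side of \eqref{1.39}.

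Third, I would remove the discretization: the correlation functions are indexed by a continuous time $s\to\infty$, whereas Theorem \ref{theorem1.5} is along the integer sequence $n$. I would interpolate by choosing, for given $s$, the integer $n=n(s)$ with $a_n\le s<a_{n+1}$ and check that the scaling sequences $a_n,c_n,\theta_n$ vary slowly enough (which is clear from \eqref{1.44}--\eqref{1.45}, as they are regularly varying) that the error incurred in replacing $s$ by $a_n$ is $o(1)$; a standard monotonicity/sandwiching argument for the first-passage and overshoot functionals handles this. One also needs the initial-increment contribution $Z_{n,0}^J$ to be negligible — but for the continuous-time clock process $S^J$ of BATM this term vanishes, so (A-0) reduces to controlling $Z_{n,1}^J$ started from $\mu=\delta_0$, which is part of Theorem \ref{theorem1.5}'s hypotheses.

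I expect the main obstacle to be Step one for ${\cal C}^2_s$ and ${\cal C}^3_s$, namely showing that the geometric confinement event ``$\max_{v\in(s,s(1+\rho))}|X(s)-X(v)|\le(\theta_s\log\theta_s)^{1/2}$'' is asymptotically equivalent to the clock-process event ``the straddling jump comes from a single block''. This requires a quantitative two-sided estimate: on one hand, within one block of $J$ (a time window of length $\theta_n$ for the VSRW), $J$ — hence $X$ — does not exit a ball of radius $\asymp(\theta_n\log\theta_n)^{1/2}$ except with probability $o(1)$, which follows from Gaussian heat-kernel upper bounds for the VSRW (the estimates of \cite{BaDe10} already invoked for Theorem \ref{theorem1.5}); on the other hand, if $J$ \emph{does} change blocks, then with high probability it has travelled farther than $(\theta_n\log\theta_n)^{1/2}$, which needs a matching lower bound / anti-concentration for the displacement accumulated over a fresh block, again from \cite{BaDe10}. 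Reconciling these with the deep-trap structure (the block that carries the large clock increment is the one containing an atypically deep trap, where $X$ is essentially frozen) is the technical heart; once it is in place, the remaining steps are routine applications of the continuous mapping theorem and the arcsine law.
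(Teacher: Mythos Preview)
Your overall architecture is right and matches the paper: reduce each ${\cal C}^i_s(1,\rho)$ to the overshoot event ${\cal M}_{s,\rho}=\{{\cal R}_s\cap(1,1+\rho)=\emptyset\}$ for the blocked clock $S_s^{J,b}$, then use $J_1$-continuity of $\chi_u$ together with Theorem \ref{theorem1.5} and the Dynkin--Lamperti arcsine law. Your discussion of ${\cal C}^2_s$ (two-sided exit-time estimates for the VSRW over a block, from \cite{BaDe10}) is also on target.

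The genuine gap is your treatment of ${\cal C}^1_s$. Your claimed identity ``$X(s)=X(s(1+\rho))$ if and only if $J$ does not leave the site during $(s,s(1+\rho))$'' is false: $J$ can leave and return, and then $X(s)=X(s(1+\rho))$ still holds. (Also, for BATM the clock $S^J$ is a continuous functional of the VSRW, so speaking of ``the first jump of $S^J$'' is not meaningful; only $S^{J,b}$ has jumps.) The implication you need in the direction $({\cal M}_{s,\rho})^c\Rightarrow({\cal M}^1_{s,\rho})^c$ is therefore not free, and in $d=2$ it is genuinely delicate because $J$ is recurrent. The paper handles this by showing that on $({\cal M}_{s,\rho})^c$ the VSRW runs for a time $m_{s,\rho}\gtrsim\theta_s$ between the two epochs, then bounding $\PP_x(J(m_{s,\rho})=x,\ m_{s,\rho}\ge\theta_s)$: for $d\ge 3$ the heat-kernel bound $q_t(x,x)\le c_1 t^{-d/2}$ gives $\int_{\theta_s}^\infty v^{-d/2}\,dv\to 0$ directly, but for $d=2$ one must argue more carefully via local-time increments over a block and properties of the limiting subordinator. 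Conversely, the direction ${\cal M}_{s,\rho}\Rightarrow{\cal M}^1_{s,\rho}$ also needs work you have not sketched: one shows that on ${\cal M}_{s,\rho}$ the overshooting block is produced by a \emph{single} deep trap $x\in T_s$ (uniqueness), that in fact $\gamma_s(x)$ is large (so $x\in T_s'$), and then that $X$ is actually sitting at $x$ at both endpoints $s$ and $s(1+\rho)$ rather than at a neighbor --- this last step uses the asymmetry $\theta>0$ to force quick return from neighbors, with a separate (easier) argument when $\theta=0$. In short, ${\cal C}^1_s$ is not the easy case; in the paper it is the longest of the three, and your proposal misidentifies where the difficulty lies.
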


As pointed out below Theorem \ref{theorem1.5}, it was proved that the rescaled process 
\be
X_s(t)\equiv a_s^{-1/2}X(s t), \quad t\geq 0,
\ee
 converges to the fractional kinetics process. Observe that the radius of the balls in \eqv(C2) for which Theorem
 \ref{theorem1.4} holds is very small compared to the normalization of $X_s$, namely,
 $(\theta_{s}\log\theta_{s})^{1/2}\ll a_s^{1/2}$.
From this and Theorem \ref{theorem1.4} one readily deduces that the correlation function defined, for $\e>0$, by
\be\label{1.56}
{\cal C}_s^\e(1,\rho)\equiv \textstyle{\PP\left(\max_{v\in (1,1+\rho)}|X_s(1)-X_s(v)|\leq \e\right), \quad s>0, \ \rho> 0},
\ee
converges to the arcsine distribution function. Interestingly, this, in turn, enables us to deduce results on the aging behavior of the fractional kinetics process itself. This is the content of Theorem \thv(theorem1.6) below.
Recall that the fractional kinetics process is defined by
\be\label{1.39aa}
 Z_{d,\a}(t)\equiv B_d(V_\a^{\leftarrow}(t)), \quad t\geq 0,
\ee
where $B_d$ is a standard Brownian motion on $\R^d$ started in $0$, $V_\a$ is an $\a$-stable subordinator with zero drift that is independent of $B_d$, and $V_\a^{\leftarrow}(t)=\inf\{v: V_\a(v)>t\}$ its generalized right-continuous inverse. 
By analogy with \eqref{1.56} define
\be\label{1.56a}
{\cal C}^\e(1,\rho)\equiv \textstyle{\PP\left(\max_{v\in (1,1+\rho)}|Z_{d,\a}(1)-Z_{d,\a}(v)|\leq \e\right), \quad\rho> 0.}
\ee
\begin{theorem}\label{theorem1.6}
Let $d\geq 2$. Under the assumptions of Theorem \ref{theorem1.5}, $\P$-a.s.,
\be\label{1.39a}
\lim_{\e\rightarrow 0}\lim_{s\rightarrow\infty}{\cal C}_s^\e(1,\rho) =\lim_{\e\rightarrow 0}{\cal C}^\e(1,\rho)= \asl_{\a}(1/(1+\rho)), \quad \rho>0.
\ee
\end{theorem}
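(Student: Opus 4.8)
The plan is to transfer the aging statement for BATM established in Theorem~\ref{theorem1.4} to the fractional kinetics process through the scaling limit of the rescaled walk, and then to identify the resulting $\e\downarrow0$ limit directly on the fractional kinetics process by means of the classical arcsine law for subordinators. A preliminary observation sets up the link between the two correlation functions appearing in the statement: writing $v=sv'$ and $X(st)=a_s^{1/2}X_s(t)$ one gets $\mathcal{C}^2_s(1,\rho)=\mathcal{C}^{\e_s}_s(1,\rho)$ with $\e_s\equiv(\theta_s\log\theta_s/a_s)^{1/2}$, and since $(\theta_s\log\theta_s)^{1/2}\ll a_s^{1/2}$ by \eqref{1.44}--\eqref{1.45} we have $\e_s\to0$. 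Combined with Theorem~\ref{theorem1.4} for $i=2$ and the monotonicity of $\e\mapsto\mathcal{C}^\e_s(1,\rho)$, this already yields $\liminf_{s\to\infty}\mathcal{C}^\e_s(1,\rho)\geq \asl_\a(1/(1+\rho))$ for every fixed $\e>0$, $\P$-a.s.

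The core step is to show that, $\P$-a.s., $\mathcal{C}^\e_s(1,\rho)\to\mathcal{C}^\e(1,\rho)$ as $s\to\infty$ for every $\e>0$. For this I would invoke the quenched invariance principle $X_s\Rightarrow Z_{d,\a}$ in $D([0,\infty),\R^d)$ with the $J_1$ topology, which, under the assumptions of Theorem~\ref{theorem1.5}, follows from that theorem combined with the (almost sure) invariance principle for the VSRW and the continuity of the time-change map, and is the scaling-limit result of \cite{BaCe11,Ce11}. Since the $\a$-stable subordinator is strictly increasing, $V_\a^{\leftarrow}$ is continuous, hence $Z_{d,\a}=B_d(V_\a^{\leftarrow})$ is a.s.\ continuous; consequently $J_1$-convergence to it is uniform on compacts and the functional $Y\mapsto\sup_{v\in(1,1+\rho)}|Y(1)-Y(v)|$ is continuous at $Z_{d,\a}$, so $M_s\equiv\sup_{v\in(1,1+\rho)}|X_s(1)-X_s(v)|\Rightarrow M\equiv\sup_{v\in(1,1+\rho)}|Z_{d,\a}(1)-Z_{d,\a}(v)|$. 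One then checks that the law of $M$ has no atom in $(0,\infty)$: conditionally on $\sigma(V_\a)$, on the event that $V_\a^{\leftarrow}$ is non-constant on $[1,1+\rho]$, $M$ equals the maximal displacement of a $d$-dimensional Brownian motion over a non-degenerate interval, which has a density. Hence $\PP(M_s\leq\e)\to\PP(M\leq\e)$ for every $\e>0$, i.e.\ $\mathcal{C}^\e_s(1,\rho)\to\mathcal{C}^\e(1,\rho)$.

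Next I would compute $\lim_{\e\downarrow0}\mathcal{C}^\e(1,\rho)=\lim_{\e\downarrow0}\PP(M\leq\e)=\PP(M=0)$ (monotone convergence), and $M=0$ means, by continuity of $Z_{d,\a}$, that $Z_{d,\a}$ is constant on $[1,1+\rho]$. If $V_\a^{\leftarrow}(1)=V_\a^{\leftarrow}(1+\rho)$ then, $V_\a^{\leftarrow}$ being non-decreasing, it is constant on $[1,1+\rho]$ and so is $Z_{d,\a}$; conversely, on $\{V_\a^{\leftarrow}(1)<V_\a^{\leftarrow}(1+\rho)\}$ the event $\{Z_{d,\a}(1)=Z_{d,\a}(1+\rho)\}$ has, conditionally on $\sigma(V_\a)$, probability zero since the corresponding Brownian increment is non-degenerate. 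Thus $\{M=0\}=\{V_\a^{\leftarrow}(1)=V_\a^{\leftarrow}(1+\rho)\}$ up to a null set, equivalently the event that the overshoot of $V_\a$ over level $1$ exceeds $\rho$; the classical arcsine law for the overshoot of an $\a$-stable subordinator --- the same identity underlying Theorem~\ref{theorem1.4} and used throughout the trap-model literature (\cite{FIN02,BC06b}) --- gives $\PP(V_\a^{\leftarrow}(1)=V_\a^{\leftarrow}(1+\rho))=\asl_\a(1/(1+\rho))$. Combining with the previous paragraph proves both equalities in \eqref{1.39a}: $\lim_{\e\to0}\mathcal{C}^\e(1,\rho)=\asl_\a(1/(1+\rho))$, and $\lim_{\e\to0}\lim_{s\to\infty}\mathcal{C}^\e_s(1,\rho)=\lim_{\e\to0}\mathcal{C}^\e(1,\rho)=\asl_\a(1/(1+\rho))$, $\P$-a.s.

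The main obstacle is the second step, namely securing the scaling limit $X_s\Rightarrow Z_{d,\a}$ in the \emph{quenched} ($\P$-a.s.) sense and in the $J_1$ topology --- not merely $M_1$-convergence of the clock --- and then handling the functional entering $\mathcal{C}^\e_s$ with care: the open time-interval $(1,1+\rho)$, the (vanishing) jumps of the c\`adl\`ag process $X_s$, and the discontinuity of the indicator $\1_{\{\,\cdot\,\leq\e\}}$. It is precisely the atomlessness of the law of $M$ on $(0,\infty)$ that removes this last difficulty and lets the convergence hold for \emph{all} $\e>0$. The remaining ingredients --- the identification of $\{M=0\}$ and the arcsine identity for the overshoot --- are classical and routine.
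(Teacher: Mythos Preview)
Your argument is correct and follows a genuinely different route from the paper's. The paper handles $\mathcal{C}^\e_s(1,\rho)$ by mimicking the clock-process/overshoot argument of Section~\ref{s52} for $\mathcal{C}^2_s$, and then treats $\mathcal{C}^\e(1,\rho)$ via a sandwich inequality
\[
\mathcal{C}_s^{\e/2}\Bigl(1-\e^2,\tfrac{\rho+2\e^2}{1-\e^2}\Bigr)-\bigl[1-\mathcal{C}_s^{\e/2}\bigl(1-\e^2,\tfrac{2\e^2}{1-\e^2}\bigr)\bigr]-\delta_s\leq \mathcal{C}^\e(1,\rho)\leq \mathcal{C}^\e_s(1,\rho)+\delta_s,
\]
where $\delta_s$ is controlled using the $J_1$ convergence $X_s\Rightarrow Z_{d,\alpha}$ from \cite{BaCe11,Ce11} and the definition of Skorohod's metric. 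You instead use that same $J_1$ convergence once, at the outset, combined with the a.s.\ continuity of $Z_{d,\alpha}$: this upgrades $J_1$ convergence to local uniform convergence, makes the sup-displacement functional continuous at the limit, and after checking that $M=\sup_{v\in(1,1+\rho)}|Z_{d,\alpha}(1)-Z_{d,\alpha}(v)|$ has no atom in $(0,\infty)$ yields the sharper pointwise statement $\lim_{s\to\infty}\mathcal{C}^\e_s(1,\rho)=\mathcal{C}^\e(1,\rho)$ for every $\e>0$. You then read off $\lim_{\e\downarrow0}\mathcal{C}^\e(1,\rho)=\PP(M=0)$ directly on the fractional kinetics process and identify it with the arcsine law via the overshoot of $V_\alpha$.

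Your route is shorter and yields a bit more (existence of $\lim_s\mathcal{C}^\e_s$ for each fixed $\e$), at the cost of importing the quenched $J_1$ invariance principle $X_s\Rightarrow Z_{d,\alpha}$ wholesale from \cite{BaCe11,Ce11}; the paper also invokes this result, but only to control the Skorohod error $\delta_s$, keeping the first half of the theorem within the clock-process framework developed earlier. Two minor remarks: your preliminary Step~1 (the lower bound via $\mathcal{C}^2_s=\mathcal{C}^{\e_s}_s$) is redundant once Steps~2--3 are in place; and strictly speaking the $J_1$ convergence of $X_s$ is established in \cite{BaCe11,Ce11} rather than being a direct corollary of Theorem~\ref{theorem1.5} (which concerns the blocked clock), though the ingredients overlap.
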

\begin{remark}
As a final remark notice that our results are only valid for $d\geq 2$. It is known that the situation in $d=1$ is completely different, see \cite{FIN02}, \cite{BC05}. The clock process converges to the integral of the local time of a Brownian motion on $\R$ with respect to the so-called random speed measure -- a scaling limit of the random environment -- and the scaling limit of $X$ is a singular diffusion on $\R$; see e.g.~\cite{BC07} and \cite{Ce11} for further discussions.
\end{remark}

The remainder of the paper is structured as follows. Section \ref{S2} contains the proof of Theorem 
\ref{theorem1.1} and Theorem \ref{theorem1.3}. In Section \ref{S3} we collect preparatory results for the proof of Theorem \ref{theorem1.5}. The latter is carried out in Section \ref{S4}. 
Finally, Section \ref{S5} contains the proofs of Theorem \ref{theorem1.4} and Theorem \ref{theorem1.6}. Two lemmata are proven in the Appendix.

\textbf{Acknowledgement.} We thank Pierre Mathieu for pointing out that the proof of \eqref{4.28} in an earlier version was incomplete. We are grateful to an anonymous referee for identifying an error in our use of Theorem \ref{theorem3.1}. 


 \section{Proof of Theorem \ref{theorem1.1} and Theorem \ref{theorem1.3}}\label{S2}
We now come to the proofs of the abstract theorems of Section \ref{S1}. We first prove Theorem \ref{theorem1.1}. We then show that the conditions of Theorem \ref{theorem1.3} imply those of Theorem \ref{theorem1.1}, thereby proving Theorem \ref{theorem1.3}. Finally, we prove the lemmata of Section \ref{S1}. 

\begin{proof}[Proof of Theorem \ref{theorem1.1}]
As mentioned earlier, the proof is based on a result by Durrett and Resnick \cite{DR78} that gives conditions for partial sum processes of dependent random variables to converge. We use this result in a specialized form suitable for our application that we take from \cite{G10a}, namely Theorem 2.1 p.~7.

Throughout we fix a realization $\omega \in \Omega$ of the random environment but do not make this explicit in the notation. We set
\be
\wh S_n^{J,b}(t)\equiv S_n^{J,b}(t)-\left(Z_{n,1}^J+ Z_{n,0}^J\right), \quad t>0.
\ee
Condition (A-0) ensures that $\wh S_n^{J,b}- S_n^{J,b}$ converges to zero, uniformly. Thus, we must show that under Conditions (A-1)-(A-4)
\be
\wh S_n^{J,b}\Rightarrow V_{\nu}.
\ee
This will follow if we can verify Conditions (D1)-(D3) of Theorem 2.1 in \cite{G10a} for $\wh S_n^{J,b}$.

For this, let $\{\FF_{n,k}, k\geq 0\}$ be an array of sigma algebras, where for $k\geq 0$, $\FF_{n,k}$ is generated by $\{\ell_s(x),\ s\leq\theta_nk,\ x\in \Z^d\}$. When $J$ is continuous $\FF_{n,k}$ is generated by $\{J(s),\ s\leq \theta_nk\}$, whereas when $J$ is discrete $\FF_{n,k}$ is generated by $\{J(i), e_i,\ i\leq\theta_nk\}$. Note that for $n,k\geq 1$, $Z^J_{n,k}$ is $\FF_{n,k}$ measurable and $\FF_{n,k-1}\subset\FF_{n,k}$. 

We first establish that Condition (D1) is satisfied. For $t>0$ and $u>0$ we define
\be\label{2.2}
\textstyle{\nu_n^{J,t}(u,\infty)\equiv\sum_{k=1}^{k_n\left(t\right)-1} \PP_{\mu}\left(Z^J_{n,k+1} >u \mid \FF_{n,k}\right) .}
\ee
This conditions then states that for all $u>0$ such that $\nu(\{u\})=0$ and all $t>0$ we have in $\PP_{\mu}$-probability
\be\label{2.1}
\textstyle{\lim_{n\rightarrow \infty}\nu_n^{J,t}(u,\infty)= t\nu(u,\infty).}
\ee
By the Markov property, $\nu_n^{J,t}(u,\infty)$ can be rewritten as
\be\label{2.3}
\textstyle{\nu_n^{J,t}(u,\infty)=\sum_{k=1}^{k_n(t)-1}\sum_{x\in\VV} \mathbbm{1}_{J(k\theta_n)=x}Q_n^u(x)=k_n(t)\sum_{x\in \VV} \pi_n^{J,t}(x)Q_n^u(x),}
\ee
where, for $x\in \VV$,
\be
\textstyle{\pi_n^{J,t}(x)\equiv (k_n(t))^{-1}\sum_{k=1}^{k_n(t)-1} \mathbbm{1}_{J(k\theta_n)=x},}
\ee
denotes the empirical measure induced by the sequence $\{J(k\theta_n), k=1, \ldots, k_n(t)-1\}$. Taking the expectation with respect to $\PP_{\mu}$, \eqref{2.3} yields
\be\label{2.4}
\textstyle{\EE_{\mu}\nu_n^{J,t}(u,\infty)=k_n(t)\sum_{x\in\VV}\EE_{\mu}\left(\pi_n^{J,t}(x)\right)Q_n^u(x) =\nu_n^t(u,\infty).}
\ee
Since (A-2) ensures that $\lim_{n\rightarrow\infty}\nu_n^{t}(u,\infty)=t\nu(u,\infty)$ it suffices to prove that 
\be\label{2.5}
\lim_{n\rightarrow\infty} \PP_{\mu}\left(\left|\nu_n^{J,t}(u,\infty)-\nu_n^{t}(u,\infty)\right|>\varepsilon\right)=0, \quad \quad \forall \varepsilon>0,
\ee
i.e.~that we may replace $\pi_n^{J,t}$ by its mean value. We do this by means of a second order Chebyshev inequality. For $x,y\in\VV$ and $k,j\in\N$ write
\bea\label{2.6}
\bar q_{k,j}(x,y)\equiv P_{\mu}(J(k)=x, J(j)=y) \quad \mbox{ and } \quad  q_{k}(x,y)\equiv P_{x}(J(k)=y) ,
\eea
with the convention that $q_{k}(y)\equiv P_{\mu}(J(k)=y)$. Then, on the one hand,
\bea
\textstyle{\EE_{\mu} \left(\nu_n^{J,t}(u,\infty)\right)^2}&=&\textstyle{\sum_{x\in\VV} \left(Q_n^u(x)\right)^2 \left[ k_n(t)\pi_n^t(x)+ 2 \sum_{k=1}^{k_n(t)-2} \sum_{j=k+1}^{k_n(t)-1} \bar q_{k\theta_n,j\theta_n}(x,x)\right]}\nonumber\\
&+&\textstyle{2\sum_{{x,x'\in\VV}\atop{x\neq x'}} Q_n^u(x) Q_n^u(x') \sum_{k=1}^{k_n(t)-2} \sum_{j=k+1}^{k_n(t)-1} \bar q_{k\theta_n,j\theta_n}(x,x')\ ,\label{2.7}}
\eea
and on the other hand, 
\bea
\left(\EE_{\mu}\nu_n^{J,t}(u,\infty)\right)^2&\geq&\textstyle{2\sum_{x\in\VV} \left(Q_n^u(x)\right)^2\sum_{k=1}^{k_n(t)-2} \sum_{j=k+1}^{k_n(t)-1} q_{k\theta_n}(x)q_{j\theta_n}(x)}\nonumber\\
&+&\textstyle{2\sum_{{x,x'\in\VV}\atop{x\neq x'}}Q_n^u(x)Q_n^u(x')\sum_{k=1}^{k_n(t)-2}\sum_{j=k+1}^{k_n(t)-1} q_{k\theta_n}(x)q_{j\theta_n}(x')}.\quad\quad\label{2.8}
\eea
Combining \eqref{2.7} and \eqref{2.8}, we obtain that
\bea\label{2.9}
&&\textstyle{\EE_{\mu}\left(\nu_n^{J,t}(u,\infty)\right)^2 - \left(\EE_{\mu}\nu_n^{J,t}(u,\infty)\right)^2}\nonumber\\
&\leq&\sigma^t_n(u,\infty)\nonumber\\ 
&+&\textstyle{ \sum_{x\in\VV} \left(Q_n^u(x)\right)^2\sum_{k=1}^{k_n(t)-2} \sum_{j=k+1}^{k_n(t)-1}\left[\bar q_{k\theta_n,j\theta_n}(x,x) - q_{k\theta_n}(x) q_{j\theta_n}(x)\right]}\nonumber\\
&+&\textstyle{ \sum_{{x,x'\in\VV}\atop{x\neq x'}} Q_n^u(x)Q_n^u(x')\sum_{k=1}^{k_n(t)-2} \sum_{j=k+1}^{k_n(t)-1} \left[\bar q_{k\theta_n,j\theta_n}(x,x') - q_{k\theta_n}(x) q_{j\theta_n}(x')\right]}\nonumber\\
&\equiv& (I) + (II) + (III).
\eea
By (A-3), $(I)$ tends to zero as $n\rightarrow \infty$. To bound $(II)$, we drop the terms involving 
$q_{k\theta_n}(x)q_{j\theta_n}(x)$, and use the Markov property to write 
\bea
(II)&\leq&\textstyle{\sum_{x\in\VV} \left(Q_n^u(x)\right)^2\sum_{k=1}^{k_n(t)-2} \sum_{j=k+1}^{k_n(t)-1} q_{k\theta_n}(x)P_{x}\left( J((j-k)\theta_n)=x \right)}\nonumber\\
&\leq&\textstyle{ \sum_{x\in\VV} \left(Q_n^u(x)\right)^2   \sum_{k=1}^{k_n(t)-2} q_{k\theta_n}(x)\sum_{j=1}^{k_n(t)-1} P_{x}\left( J(j\theta_n)=x\right)}\nonumber\\
&\leq&\textstyle{ k_n(t)\sum_{x\in\VV} \left(Q_n^u(x)\right)^2   \pi_n^t(x) \sum_{j=1}^{k_n(t)-1} P_{x}\left( J(j\theta_n)=x\right)}\nonumber\\
&\leq&\textstyle{\sigma_n^t(u,\infty)\sup_{x\in\VV} \sum_{j=1}^{k_n(t)-1} P_{x}\left( J(j\theta_n)=x\right)} \ .\label{2.10}
\eea
By (A-1) and (A-3), $(II)\rightarrow 0$ as $n\rightarrow\infty$. 

Let us now show, using (A-1) and (A-2), that also $(III)$ vanishes. Fix $x\in \VV$, $k\geq 1$, and $j\geq k+1$. For every $x'\neq x$ we bound the
 term $Q_n^u(x')$ by $1$.
 Now
\bea\label{2.11}
\textstyle{\sum_{x':\ x'\neq x}\bar q_{k\theta_n,j\theta_n}(x,x') = P_{\mu}(J(k\theta_n)=x, J(j\theta_n)\neq x)\leq P_{\mu}(J(k\theta_n)=x),}
\eea
and
\bea\label{2.12}
\textstyle{\sum_{x':\ x'\neq x}q_{k\theta_n}(x)q_{j\theta_n}(x') =  P_{\mu}(J(k\theta_n)=x)P_{\mu}(J(j\theta_n)\neq x),}
\eea
so that, combining \eqref{2.11} and \eqref{2.12},
\bea\label{2.13}
(III)&\leq&\textstyle{\sum_{x\in\VV} Q_n^u(x)\sum_{k=1}^{k_n(t)-2}P_{\mu}(J(k\theta_n)=x)\sum_{j=1}^{k_n(t)-1} P_{\mu}(J(j\theta_n)= x)}\nonumber\\
&\leq&\textstyle{ \nu_n^t(u,\infty)\sup_{x\in\VV}\sum_{j=1}^{k_n(t)-1} P_{\mu}(J(j\theta_n)=x)}.
\eea
By (A-2), $\nu_n^t(u,\infty)$ converges as $n\rightarrow \infty$ to $t\nu(u,\infty)$, which is a finite number. Thus invoking (A-1), $(II)\rightarrow 0$ as $n\rightarrow\infty$.
Inserting our bounds in \eqref{2.9}, the variance of $\nu_n^{J,t}(u,\infty)$ tends to zero as $n\rightarrow\infty$. The verification of Condition (D1) is complete.

Next we show that Condition (D2) of Theorem 2.1 in \cite{G10a} is satisfied. For $t>0$, $u>0$ we define 
\bea\label{2.14a}
\textstyle{\sigma_n^{J,t}(u,\infty)\equiv \sum_{k=1}^{k_n(t)-1}\left(\PP_\mu\left( Z_{n,k+1}^J>u| \FF_{n,k}\right)\right)^2.}
\eea
This condition then states that for all $u>0$ such that $\nu(\{u\})=0$ and all $t>0$,
\bea\label{2.14}
\textstyle{\lim_{n\rightarrow\infty} \PP_{\mu}\left(\sigma_n^{J,t}(u,\infty)>\varepsilon\right)=0, \quad \quad \forall \varepsilon>0.}
\eea
By the Markov property,
\bea\label{2.15}
\textstyle{\sigma_n^{J,t}(u,\infty)=k_n(t) \sum_{x\in \VV} \pi_n^{J,t}(x)\left(Q_n^u(x)\right)^2 .}
\eea
The expectation of $\sigma_n^{J,t}(u,\infty)$ with respect to $\PP_{\mu}$ is equal to $\sigma^t_n(u,\infty)$ and tends by (A-3) to zero. Thus, Condition (D2) is satisfied. 

It remains to verify Condition (D3) of Theorem 2.1 in \cite{G10a}. It is in particular satisfied if
\bea\label{2.16}
\textstyle{\lim_{\varepsilon\rightarrow 0}\limsup_{n\rightarrow\infty}\sum_{k=1}^{k_n(t)-1} \EE_{\mu} Z^J_{n,k+1}\1_{Z^J_{n,k+1}\leq \varepsilon}=0.}
\eea
By the Markov property the left hand side of \eqref{2.16} is equal to the left hand side of \eqref{a4} and vanishes by (A-4). This proving that Condition (D3) is satisfied. Therefore, the conditions of Theorem 2.1 in \cite{G10a} are verified, and so $\wh S_n^{J,b}\Rightarrow V_{\nu}$ where convergence holds weakly in the space $D[0,\infty)$ equipped with Skorohod's $J_1$ topology and $V_{\nu}$ is a subordinator with L\'evy measure $\nu$ and zero drift. \end{proof}
In the verification of Condition (D1) of Theorem 2.1 in \cite{G10a}, more precisely in the proof of the claim $(II)$, $(III)\rightarrow 0$, one sees that Condition (A-1) is used to replace $\pi_n^{J,t}$ by its average over $P_\mu$. This is to be contrasted with the setting of \cite{BG10} where $(II)$ and $(III)$ vanish because $J$ is already in the invariant measure after $\theta_n$ steps, and hence for $x,x'\in\VV$ and $j>k$ the event $\{J(k\theta_n)=x\}$ is essentially independent of $\{J(j\theta_n)=x'\}$.

\begin{proof}[Proof of Theorem \ref{theorem1.3}]
As in the proof of Theorem \ref{theorem1.1} we show that for given sequences $a_n,c_n,\theta_n$, a given initial distribution $\mu$ and for fixed $\omega\in \Omega$ (B-2)-(B-5) $\Rightarrow$ (A-2)-(A-4). Since both Theorems require that  the conditions are satisfied $\P$-a.s.~for all $t>0$ and all $u>0$, it suffices to consider a fixed realization $\omega\in \Omega$ and fixed $u>0$, $t>0$. Let us first establish that, under the assumptions of Theorem \ref{theorem1.3},
\be\label{2.17}
\textstyle{\lim_{n\rightarrow\infty} \bigl|\nu_n^t(u,\infty)-\overline\nu_n^t(u,\infty)\bigr|=0}.
\ee
By (B-2), \eqref{2.17} implies (A-2). Next
\bea
&&\textstyle{\bigl|\nu_n^t(u,\infty)-\overline\nu_n^t(u,\infty)\bigr|}\nonumber\\&\leq&
\textstyle{ \sum_{(x,k)\in {\cal A}_n^1} |P(J(k\theta_n)=x)-h_{k\theta_n}(x) | Q_n^u(x)}\nonumber\\
 &+& \textstyle{\sum_{(x,k)\in {\cal A}_n^2} |P(J(k\theta_n)=x)-h_{k\theta_n}(x) | Q_n^u(x) }.\label{2.18}
\eea
By \eqref{b52} of (B-5) the second summand tends to zero. The first summand is smaller than
\bea
&&\sup_{(x,k)\in {\cal A}_n^2}\frac{|P(J(k\theta_n)=x)-h_{k\theta_n}(x)|}{h_{k\theta_n}(x)}\sum_{(x,k)\in {\cal A}_n^2} h_{k\theta_n}(x)Q_n^u(x) \nonumber\\
&\leq&\sup_{(x,k)\in {\cal A}_n^2}\frac{|P(J(k\theta_n)=x)-h_{k\theta_n}(x) |}{h_{k\theta_n}(x)}\nu_n^t(u,\infty) ,\label{2.19}
\eea
and \eqref{b51} of (B-5) guarantees that it vanishes as $n\rightarrow\infty$, proving that (A-2) is satisfied. To establish that
\be\label{2.20}
\textstyle{\lim_{n\rightarrow\infty} \bigl|\sigma_n^t(u,\infty)-\overline\sigma_n^t(u,\infty)\bigr|=0},
\ee
we proceed as in \eqref{2.18}. Bounding $Q_n^u(x)\leq 1$, the claim of \eqref{2.20} follows from \eqref{2.18}-\eqref{2.19} and (A-3) is satisfied as well. Condition (A-4) follows in a similar way. This finishes the proof of Theorem \ref{theorem1.3}.
\end{proof}
\begin{proof}[Proof of Lemma \ref{lemma1.2}]
Let us show that \eqref{a11} and \eqref{a12} are always satisfied for transient $x$ and never for positive recurrent $x$. Since the ideas of proof are similar, we restrict ourselves to continuous time $J$'s. Let $x\in \VV$ be transient. Then, for $\mu'\in \{\d_x,\mu\}$ and any $\theta_n\gg1$,
\be\label{2.21}
\textstyle{\lim_{n\rightarrow\infty}\int_{\theta_n}^{\infty} P_{\mu'}(J(t)=x)dt=0}.
\ee
Now, for all $s<t$, $P_{\mu'}(J(t)=x)\geq P_{\mu'}(J(t-s)=x)\exp(-s\wt\lambda^{-1}(x))$, and so
\bea\label{2.23}
\textstyle{\sum_{k=1}^{k_n(t)-1} P_{\mu'}(J(k\theta_n)=x)}&=&\textstyle{\sum_{k=1}^{k_n(t)-1} \int_{k\theta_n}^{k\theta_n+1}P_{\mu'}(J(k\theta_n)=x)dt}\nonumber\\
&\leq&\textstyle{ e^{\wt\lambda^{-1}(x)}\int_{\theta_n}^{\infty}P_{\mu'}(J(t)=x)dt},
\eea
which by \eqref{2.21} tends to zero. This proves that \eqref{a11} and \eqref{a12} hold for transient $x\in \VV$. 

Since \eqref{a12} can only be satisfied if $P_{x}(J(t)=x)\rightarrow 0$ and since by Theorem 1.8.3 in \cite{Norris} $\lim_{t\rightarrow\infty}P_{x}(J(t)=x)>0$ for positive recurrent $x\in \VV$, (A-1) cannot hold for positive recurrent $x\in \VV$. By Theorem 3.5.3 in \cite{Norris} this also proves that 
 (A-1) cannot hold for $J$ that admit for an invariant probability measure.
\end{proof}
\begin{proof}[Proof of Lemma \ref{ergthmzdcor}]
Since the proofs are the same, we only prove the claim for (A-0)-(A-4). 
Assume that  (A-0)-(A-4) are satisfied $\P$-a.s.~for fixed $u>0$, $t>0$ and given $a_n,c_n,\theta_n$, and $\mu$.
We construct a set $\Omega^{\tau}\subseteq \Omega$ of full measure on which (A-0)-(A-4) are satisfied for all $u>0$, $t>0$. The sums on the left hand sides of \eqref{a11}, \eqref{a12}, \eqref{a4}, and the quantities $\nu_n^t(u,\infty)$, and $\sigma_n^t(u,\infty)$ depend on $t$ through $k_n(t)\pi_n^t(x)$, $x\in \VV$, which is increasing in $t$. Moreover, as sums of tail distributions, the quantities $\PP_\mu(S_n^{J,b}(0)>u)$, $\nu_n^t(u,\infty)$, and $\sigma_n^t(u,\infty)$ are decreasing in $u$. The right hand sides of \eqref{a11}-\eqref{a4} are continuous in $t$. The only right hand side that depends on $u$ is that of \eqref{a2} and in (A-2) we require that \eqref{a2} holds for all continuity points of the mapping $u\mapsto\nu(u,\infty)$. Thus, $\Omega^\tau\equiv \bigcap_{u,t>0, t\in \Q, u\in \Q, \nu(\{u\})=0}\Omega^{\tau}(u,t)\subseteq\Omega$ is of full measure and (A-0)-(A-4) hold true for all $u>0$ and all $t>0$ on $\Omega^\t$. The proof of Lemma \ref{ergthmzdcor} is finished.
\end{proof}


\section{Application to BATM} \label{S3}
This section and the next are devoted to the proof of Theorem \ref{theorem1.5}. In the present section we derive new conditions that imply (B-2)-(B-5) and are specific to BATM. We also show that (A-0) and (A-1) hold true for BATM. In Section \ref{S4} we prove that these new conditions are satisfied and give the conclusion of the proof. 
\subsection{The VSRW}\label{S3.1}
We collect results for $J$ that are used in the proof of Theorem \ref{theorem1.5}. The VSRW is a well-studied Markov jump process in random environment (see \cite{BaDe10}, \cite{BaCe11}, \cite{Ce11}, \cite{ABDH}, and the references therein). The proof of Theorem \ref{theorem1.5} relies heavily on very precise results for $J$ that can be found in \cite{BaDe10}. The results that we are using repeatedly concern the heat kernel, which we now define. For $x,y\in \Z^d$ and $t>0$ the heat kernel is given by
\be\label{3.1}
q_t(x,y)\equiv P_x(J(t)=y).
\ee
The bounds for $q_t(x,y)$ that are contained in \cite{BaDe10} allow us to control all hitting, local, and exit times of vertices and balls that we need for the proof of Theorem \ref{theorem1.5}.  Moreover, we use the local central limit theorem which can be found in \cite{BaDe10}. Note that in virtue of Theorem 6.1 in \cite{BaDe10} and Lemma 9.1 in \cite{BaCe11}, these theorems apply in the present setting. We denote by $|\cdot|$ the Euclidean distance. For convenience, we restate Theorem 1.2 (a)-(c) (heat kernel bounds) and Theorem 5.14 (uniform local central limit theorem) from \cite{BaDe10}. 
\begin{theorem}\label{theorem3.1}
There exists $c_1\in(0,\infty)$ such that for all $x,y\in \Z^d$ and $t>0$,
\be
q_t(x,y)\leq c_1 t^{-d/2}.\label{theorem311}
\ee
There exist identically distributed random variables $\{U_x\}_{x\in \Z^d}$ whose distribution satisfies
\be\label{theorem312}
\P(U_x>v)\leq c_1 \exp(-c_2 v^{1/3}), \quad v>0,
\ee
where $c_1,c_2\in(0,\infty)$, and such that we have
\bea\label{theorem313}
q_t(x,y)\hspace{-2mm}&\leq&\hspace{-2mm} c_1 t^{-d/2} e^{-c_2 |x-y|\{1\wedge |x-y|t^{-1}\}},\quad \mbox{ if }|x-y|\vee t^{1/2}\geq U_x,\\
q_t(x,y)\hspace{-2mm}&\geq&\hspace{-2mm} c_1 t^{-d/2} e^{-c_2 |x-y|^2t^{-1}},\quad \mbox{ if }t\geq U_x^2\vee |x-y|^{4/3}\label{theorem314}.
\eea
For $x\in \R^d$ write $\lfloor x\rfloor =(\lfloor x_1\rfloor, \ldots, \lfloor x_d\rfloor)$. There exists $c_v>0$ such that, for $T>0$,
\bea\label{theorem315}
\textstyle{\lim_{n\rightarrow\infty}\sup_{x\in \R^d}\sup_{t\geq T}\bigl|n^{d/2} q_{nt}(0,\lfloor n^{1/2}x\rfloor)-(2\pi c_vt)^{d/2} e^{-|x|^2/2c_vt}\bigr|=0}, \quad \P\mbox{-a.s.}.
\eea
\end{theorem}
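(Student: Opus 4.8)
The plan is to deduce Theorem~\ref{theorem3.1} directly from \cite{BaDe10}: the four displays \eqref{theorem311}--\eqref{theorem315} are, read off for the process $J$, exactly Theorem~1.2(a)--(c) and Theorem~5.14 of \cite{BaDe10}, so the whole task reduces to checking that the VSRW of BATM is admissible input to those theorems and to identifying the constant $c_v$.

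First I would recast $J$ (whose rates are given by \eqref{1.41}) as the variable speed random walk in the random conductance model with edge weights $c_{xy}=(\tau(x)\tau(y))^{\theta}$, $x\sim y$, whose speed measure is the counting measure on $\Z^d$. The environment hypotheses of \cite{BaDe10} require only that the conductances be uniformly bounded below by a fixed positive constant and that $\{c_{xy}\}$ be stationary and ergodic under the $\Z^d$-shifts; crucially, for the VSRW no upper bound and no moment condition on the conductances are needed for the heat-kernel estimates and the quenched local CLT. Both requirements hold here: since $\tau(x)\geq\bar c$ pointwise and $\theta\geq 0$ one has $c_{xy}\geq\bar c^{2\theta}>0$ (a harmless global time change brings this to the normalisation $c_{xy}\geq 1$ used in \cite{BaDe10}), while $\{c_{xy}\}$ is a deterministic, finite-range, shift-covariant functional of the i.i.d. field $\{\tau(x)\}_{x\in\Z^d}$ and is therefore stationary and ergodic. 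Feeding this into \cite{BaDe10} yields \eqref{theorem311}, \eqref{theorem313} and \eqref{theorem314} with stationary random scales $\{U_x\}$ obeying the stretched-exponential tail \eqref{theorem312}, and the $\P$-a.s. uniform local central limit theorem \eqref{theorem315} with $c_v>0$ the (deterministic, and non-degenerate because $c_{xy}\geq 1$) diffusion constant of the quenched invariance principle for $J$.

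The one point that needs care -- and the reason the i.i.d. version of \cite{BaDe10} cannot simply be quoted -- is that the edge conductances are correlated, as $c_{xy}$ and $c_{xz}$ share the vertex $x$. Hence I would invoke the stationary--ergodic extension of the Barlow--Deuschel estimates, namely Theorem~6.1 of \cite{BaDe10}, and for the verification that the precise correlation structure of the BATM conductances meets its hypotheses I would cite Lemma~9.1 of \cite{BaCe11}, where exactly this is carried out. So the main obstacle is bookkeeping rather than analysis: making sure one is using the correlated (stationary ergodic) conductance framework, that uniform ellipticity from below survives the normalisation so that $c_v$ is strictly positive, and that the scales $U_x$ may be taken identically distributed with the tail \eqref{theorem312}; none of this requires estimates beyond those already available in \cite{BaDe10} and \cite{BaCe11}.
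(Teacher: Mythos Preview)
Your proposal is correct and matches the paper's treatment exactly: Theorem~\ref{theorem3.1} is not proved in the paper but simply restated from \cite{BaDe10}, with the applicability to the BATM conductances justified precisely via Theorem~6.1 of \cite{BaDe10} and Lemma~9.1 of \cite{BaCe11}, which are the same two references you invoke. There is nothing to add.
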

For $x\in \Z^d$, define
\be\label{3.7}
\textstyle{A_n(x)\equiv\{\omega\in \Omega: \sup_{y: |x-y|\leq {2a_n}} U_y \leq c_0 (\log a_n)^3\};}
\ee 
by convention $A_n(0)\equiv A_n$. By \eqref{theorem312}, there exists $c_0\in (0,\infty)$ such that $\P(A_n^c)\leq c_1 n^{-(5\vee 2d)}$. Therefore, writing
\be\label{3.7a}
\textstyle{A\equiv \bigcup_{n\geq 1}\bigcap_{m\geq n}A_m},
\ee
we have by Borel-Cantelli Lemma that $\P(A)=1$. On the event $A$, we have for all but finitely many $n$ that $\sup_{y: |y|\leq {2a_n}} U_y \leq c_0 (\log a_n)^3$. We will make use of Theorem \ref{theorem3.1} on the events $A_n$ and $A$. Whenever we do so, we check whether, given $x,y$ such that $|x|,|y|\leq a_n$ and $t>0$, $|x-y|\wedge t^{1/2}\geq c_0 (\log a_n)^3$ or $t^{1/2}\geq c_0 (\log a_n)^3\vee |x-y|^{2/3}$. 

We now state two lemmata that are needed in the proof of Theorem \ref{theorem1.5}. Their proofs are postponed to the appendix. The first concerns the distribution of the exit times of certain balls. We denote by $B_r(x)$ the ball of radius $r$ centered at $x$; by convention $B_r\equiv B_r(0)$. We write $\eta(B_r(x))$ for the exit time of $B_r(x)$. 
\begin{lemma}\label{lemma3.2}
Let $a_n$ be as in \eqref{1.45}. There exists $c_4\in(0,\infty)$ such that the following holds. For all sequences $m_n,r_n$ such that $m_n\geq c_0^2r_n (\log a_n)^6$ and $a_n\geq m_n$, on the event $A_n$,
\be\label{lemma321}
\textstyle{P_x(\eta( B_{r_n}(x)) \leq m_n) \leq e^{-c_4 r_n^2 m_n^{-1}}, \quad \forall x\in B_{a_n}.}
\ee
For all sequences $m_n,r_n$ such that $a_n\geq r_n\geq c_0(\log a_n)^3$ and $m_n\geq 3 r_n^2$,  on the event $A_n$, 
\be\label{lemma322}
\textstyle{P_x(\eta( B_{r_n}(x)) \geq m_n) \leq e^{-c_4 m_n^{1/2} r_n^{-1}}, \quad \forall x\in B_{a_n}.}
\ee
\end{lemma}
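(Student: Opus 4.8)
The plan is to deduce both exit time estimates from the heat kernel bounds of Theorem \ref{theorem3.1}, valid on the event $A_n$ where $\sup_{y:|y|\le 2a_n}U_y\le c_0(\log a_n)^3$. For \eqref{lemma321} (fast exit), I would first observe that exiting $B_{r_n}(x)$ by time $m_n$ forces the walk to be displaced by at least $r_n$ at \emph{some} time $\le m_n$; the standard way to turn this into a heat kernel statement is to combine a union bound over the (finitely many) boundary vertices with a last-exit / reflection-type argument, or more cheaply, to write $P_x(\eta(B_{r_n}(x))\le m_n)\le \sum_{z\in B_{r_n}(x)}\int \ldots$ Actually the cleanest route is the following well-known chaining trick: if $\eta(B_{r_n}(x))\le m_n$ then either $|J(m_n)-x|\ge r_n/2$, or the walk exited and returned, i.e.\ it made a displacement of size $\ge r_n/2$ within a time interval of length $\le m_n$ starting from a point on $\partial B_{r_n}(x)$. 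Using the sub-Gaussian upper bound \eqref{theorem313} — which applies since under $m_n\ge c_0^2 r_n(\log a_n)^6\ge (c_0(\log a_n)^3)^2$ and with $|x-y|=r_n/2$ we are in the regime $|x-y|\vee t^{1/2}\ge U_x$ — we get $q_t(y,w)\le c_1 t^{-d/2}e^{-c_2|y-w|^2/t}$ for $t\le m_n$ (the relevant branch since $r_n^2/m_n$ is small here), and summing over $w$ at distance $\ge r_n/2$ yields a bound of order $e^{-c_4 r_n^2/m_n}$ after absorbing the polynomial prefactor into the constant (possible because $r_n^2/m_n\le (c_0^2(\log a_n)^6)^{-1}$ is bounded). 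An application of the strong Markov property at $\eta(B_{r_n}(x))$ closes the argument.

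For \eqref{lemma322} (slow exit), the mechanism is reversed: staying inside $B_{r_n}(x)$ up to time $m_n$ means the walk has returned to (or stayed near) a bounded region for an anomalously long time, which contradicts the diffusive spreading forced by the Gaussian \emph{lower} bound \eqref{theorem314}. The plan is a block/iteration argument: partition $[0,m_n]$ into $N_n\asymp m_n/r_n^2$ intervals of length $\asymp 3r_n^2$, and note that on each interval, starting from any point $z\in B_{r_n}(x)$, the probability of \emph{not} leaving $B_{r_n}(x)$ is bounded away from $1$ uniformly in $z$. The latter uniform bound comes from \eqref{theorem314}: with $t\asymp 3r_n^2\ge U_z^2\vee(2r_n)^{4/3}$ (using $r_n\ge c_0(\log a_n)^3$ and $r_n$ large), there is a point $w$ with $|z-w|\asymp r_n$ and $q_t(z,w)\ge c t^{-d/2}e^{-c|z-w|^2/t}\ge c\, r_n^{-d}$, so the walk reaches a neighborhood of radius $\sim r_n$ of $z$ with probability $\ge c' r_n^{-d}\cdot r_n^{d}=c'>0$ (summing the lower bound over a ball of radius $\sim r_n$ worth of target sites), hence exits $B_{r_n}(x)$ unless $z$ is deep inside — handled by enlarging the ball slightly or choosing the constant in $3r_n^2$. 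Iterating the strong Markov property over the $N_n$ blocks gives $P_x(\eta(B_{r_n}(x))\ge m_n)\le (1-c')^{N_n}\le e^{-c_4 m_n^{1/2}r_n^{-1}}$ once we note $N_n\asymp m_n/r_n^2\ge c\,m_n^{1/2}/r_n$ in the relevant range $m_n\ge 3r_n^2$ (the two are comparable only up to the exponent, but since we may weaken to $m_n^{1/2}/r_n$ the stated bound follows).

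The main obstacle I expect is the bookkeeping with the restricted validity of Theorem \ref{theorem3.1}: all heat kernel estimates are only available on $A_n$ and only for time/space scales exceeding $c_0(\log a_n)^3$, so one must check at each invocation that the parameters $m_n,r_n$ (and the auxiliary displacements $r_n/2$, $2r_n$, block lengths $\asymp r_n^2$) indeed satisfy $|x-y|\vee t^{1/2}\ge c_0(\log a_n)^3$ for \eqref{theorem313}, respectively $t\ge c_0^2(\log a_n)^6\vee|x-y|^{4/3}$ for \eqref{theorem314}. The hypotheses $m_n\ge c_0^2 r_n(\log a_n)^6$ in the first part and $r_n\ge c_0(\log a_n)^3$, $m_n\ge 3r_n^2$ in the second are exactly what is needed to make these checks go through, and the constant $c_4$ is obtained at the very end after absorbing all polynomial-in-$\log a_n$ prefactors — legitimate precisely because, in each regime, the exponent ($r_n^2/m_n$ or $m_n^{1/2}/r_n$) dominates any such prefactor. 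The uniformity over $x\in B_{a_n}$ is free since $A_n$ controls $U_y$ for all $|y|\le 2a_n$ and the arguments never leave $B_{2a_n}$.
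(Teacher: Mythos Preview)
The paper's own proof is extremely brief and proceeds by citation: for \eqref{lemma321} it invokes Proposition~2.18 of \cite{BaDe10}, after checking that on $A_n$ one has $U_z\le c_0(\log a_n)^3\le m_n/r_n$ for all $z\in B_{r_n}(x)$ (which is exactly what the hypothesis $m_n\ge c_0^2 r_n(\log a_n)^6$ gives); for \eqref{lemma322} it refers to the proof of Lemma~3.2 in \cite{Ce11}. Your plan instead re-derives both estimates directly from the heat kernel bounds of Theorem~\ref{theorem3.1} --- a chaining/reflection argument for \eqref{lemma321} and a block iteration for \eqref{lemma322}. This is essentially how the cited results are themselves proved, so your route is correct in spirit and more self-contained, but substantially longer; the paper simply outsources the work. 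Your block argument for \eqref{lemma322} is fine and in fact yields the stronger exponent $m_n/r_n^2$, from which the stated $m_n^{1/2}/r_n$ follows since $m_n\ge 3r_n^2$.

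One slip worth flagging in your treatment of \eqref{lemma321}: you justify absorbing the prefactor by writing ``$r_n^2/m_n\le (c_0^2(\log a_n)^6)^{-1}$ is bounded''. The hypothesis $m_n\ge c_0^2 r_n(\log a_n)^6$ only gives $r_n/m_n\le (c_0^2(\log a_n)^6)^{-1}$, not $r_n^2/m_n$; there is no a~priori upper bound on $r_n^2/m_n$. In every application of \eqref{lemma321} in the paper one actually has $r_n^2/m_n\to\infty$ (e.g.\ $r_n=\theta_n^{1/2}\log\theta_n$, $m_n=\theta_n$), and it is precisely in that regime that the chaining bound $C e^{-c\,r_n^2/m_n}$ is nontrivial and the constant $C$ can be absorbed into the exponent. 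So your conclusion is right, but the justification should invoke $r_n^2/m_n$ large rather than small.
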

The second lemma provides bounds on the expected number of different sites that $J$ visits in certain time intervals. Given an increasing sequence of integers, $m_n$, we define the range of $J$ in the time interval $[0,m_n]$ as
\bea\label{3.10}
\textstyle{R_{m_n}\equiv \sum_{y\in \Z^d } \1_{\sigma(y)\leq m_n},}
\eea
where $\sigma(y)\equiv\inf\{t\geq 0: J(t)=y\}$ is the hitting time of $y$. 
\begin{lemma}\label{lemma3.3}
Let $m_n$ be such that $a_n\geq m_n\geq c_0^2 (\log a_n)^{6}$. There exists $c_5\in (0,\infty)$ such that the following holds for $n$ large enough. For $d\geq 2$ and $k\in \{1,2\}$,
\bea\label{lemma331}
\textstyle{\E E_xR_{m_n}^k \leq c_5\left(m_n(\log m_n)^{-k} \1_{d=2}+m_n^{1/k}\1_{d\geq 3}\right).}
\eea
Moreover, for $d=2$ there exists $f_{m_n}:(0,\infty)\rightarrow (0,\infty)$ such that, on the event $A_n$, 
\bea
&&P(\sigma(x)\leq m_n)\leq f_{m_n}(|x|),\quad\mbox{ for all } x\in B_{m_n},\eea
and $f_{m_n}$ satisfies
\bea
\textstyle{\sum_{x\in B_{m_n} }\bigl( f_{m_n}(|x|)\bigr)^k\leq c_5m_n(\log m_n)^{-k}\label{lemma332},\quad k\in\{1,2,4\}.\quad\quad}
\eea
\end{lemma}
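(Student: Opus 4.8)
The plan is to reduce the whole statement to heat-kernel and exit-time estimates for the VSRW, which on the good events $A_n$ are exactly what Theorem \ref{theorem3.1} and Lemma \ref{lemma3.2} provide. Write $G_T(a,b)\equiv\int_0^T q_s(a,b)\,ds$ for the truncated Green function. First I would expand the moments over sites, $E_xR_{m_n}=\sum_{y}P_x(\sigma(y)\le m_n)$ and $E_xR_{m_n}^2=\sum_{y,z}P_x(\sigma(y)\le m_n,\sigma(z)\le m_n)$. The workhorse is the strong Markov property at $\sigma(y)$: since on $\{\sigma(y)\le m_n\}$ the chain started at $x$ spends, in expectation, at least $G_{m_n}(y,y)$ units of time at $y$ before time $2m_n$, one gets $P_x(\sigma(y)\le m_n)\le G_{2m_n}(x,y)/G_{m_n}(y,y)$, and, applying the same idea at whichever of $\sigma(y),\sigma(z)$ comes first,
\[
P_x(\sigma(y)\le m_n,\sigma(z)\le m_n)\le P_x(\sigma(y)\le m_n)\,P_y(\sigma(z)\le m_n)+P_x(\sigma(z)\le m_n)\,P_z(\sigma(y)\le m_n),
\]
so that $E_xR_{m_n}^2$ is controlled by $\bigl(\sup_yE_yR_{m_n}\bigr)\,E_xR_{m_n}$ up to the lower-order diagonal term. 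Sites with $|y-x|\ge(m_n\log m_n)^{1/2}$ are set aside first: $\{\sigma(y)\le m_n\}\subseteq\{\eta(B_{|y-x|-1}(x))\le m_n\}$, and \eqref{lemma321} (applied on $A_n$ over dyadic annuli, so that its hypothesis $m_n\ge c_0^2 r_n(\log a_n)^6$ holds on each) gives $P_x(\sigma(y)\le m_n)\le e^{-c|y-x|^2/m_n}$, whence their total contribution to every moment is negligible.

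The crux is a lower bound on $G_{m_n}(y,y)$ uniform over the relevant $y$. On $A_n$ one has $U_y\le c_0(\log a_n)^3$ for $|y|\le 2a_n$, so the hypothesis $m_n\ge c_0^2(\log a_n)^6\ge U_y^2$ lets me apply \eqref{theorem314} for every $s\in[U_y^2,m_n]$, giving $q_s(y,y)\ge c_1s^{-d/2}$ there; integrating yields $G_{m_n}(y,y)\ge c\log(m_n/U_y^2)\ge c'\log m_n$ when $d=2$ (this is precisely where $m_n\gg(\log a_n)^6$ is needed, so that the subtracted $\log U_y^2$ does not eat $\log m_n$), and a corresponding lower bound when $d\ge3$. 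Feeding this into the single-site inequality, in $d=2$ the bulk contribution to $E_xR_{m_n}$ is $\le (c'\log m_n)^{-1}\sum_yG_{2m_n}(x,y)=2m_n/(c'\log m_n)$ because $\sum_yq_s(x,y)=1$; in $d\ge3$ I would instead use $G_{2m_n}(x,y)\le c(1\vee|x-y|)^{2-d}$ (split the $s$-integral at $|x-y|^2$ and use \eqref{theorem311} and \eqref{theorem313}) and sum over the diffusive range. Combined with the pairing inequality above, this gives the stated bounds on $\E E_xR_{m_n}^k$ for $k\in\{1,2\}$ on $A_n$.

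For the pointwise bound when $d=2$ I would read an explicit decreasing majorant off the Gaussian upper heat kernel \eqref{theorem313}: for $r\le(m_n\log m_n)^{1/2}$ take $f_{m_n}(r)=1\wedge\bigl(C\log(2+m_n/r^2)/\log m_n\bigr)$, the near region $r\le c_0(\log a_n)^3$ (where \eqref{theorem313} is unavailable) being swallowed by the $1\wedge$, and $f_{m_n}(r)=e^{-cr^2/m_n}$ beyond that scale. Then $P(\sigma(x)\le m_n)\le f_{m_n}(|x|)$ on $A_n$ for $x\in B_{m_n}$ by the estimate just described, and $\sum_{x\in B_{m_n}}f_{m_n}(|x|)^k$ is, up to a negligible contribution from $|x|>(m_n\log m_n)^{1/2}$, at most
\[
\int_0^{(m_n\log m_n)^{1/2}}\Bigl(1\wedge\tfrac{\log(2+m_n/r^2)}{\log m_n}\Bigr)^{k}r\,dr\ \asymp\ \frac{m_n}{(\log m_n)^k},
\]
by passing to polar coordinates, substituting $s=r^2/m_n$, and using $\int_0^1(\log(2/s))^k\,ds<\infty$; this covers $k\in\{1,2,4\}$ simultaneously and gives \eqref{lemma332}.

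Finally, to pass to the environment average note that everything above is deterministic on $A_n$, while $\P(A_n^c)\le c_1n^{-(5\vee2d)}$; on $A_n^c$ one bounds $R_{m_n}$ crudely (by the number of jumps of $J$ in $[0,m_n]$, or via Cauchy--Schwarz in $\omega$ against a rough moment bound), so the $A_n^c$-contribution to $\E E_xR_{m_n}^k$ is of lower order, which completes \eqref{lemma331}. I expect the main obstacle to be two linked points: obtaining the lower bound $G_{m_n}(y,y)\gtrsim\log m_n$ \emph{uniformly} over $y\in B_{2a_n}$ on $A_n$, and, for $k=2$, organising the double sum so that the single-site estimates combine with the right power of $\log m_n$ rather than merely with the square of the first-moment bound — it is the quantitative decay carried by $f_{m_n}$, as opposed to the trivial $P_x(\sigma(y)\le m_n)\le1$, that makes the latter work.
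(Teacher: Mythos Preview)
Your approach for the first moment and for the construction of $f_{m_n}$ is sound and is essentially the paper's: the paper also bounds $P_x(\sigma(y)\le m_n)$ by a Green-function ratio, though it uses the killed version $g_{B_{\sqrt{m_n}\log m_n}}(0,y)/g_{B_{\sqrt{m_n}\log m_n}}(y,y)$ (hit before exit from a large ball) rather than your time-truncated $G_{2m_n}/G_{m_n}$, and then splits $B_{\sqrt{m_n}\log m_n}$ into $|y|\le\sqrt{m_n}/2$ and an outer annulus. Your choice of $f_{m_n}$ and the polar-coordinate computation $\sum_x f_{m_n}(|x|)^k\lesssim m_n(\log m_n)^{-k}$ are exactly what the paper obtains.

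The genuine gap is the case $k=2$. Read literally as the second moment of the range, the bound \eqref{lemma331} is \emph{false}: by Jensen $E_xR_{m_n}^2\ge(E_xR_{m_n})^2$, and $E_xR_{m_n}$ is of order $m_n$ for $d\ge3$ (respectively $m_n/\log m_n$ for $d=2$), so one cannot possibly have $\E E_xR_{m_n}^2\le c\,m_n^{1/2}$ (respectively $\le c\,m_n(\log m_n)^{-2}$). Your decoupling inequality correctly yields $E_xR_{m_n}^2\lesssim(\sup_yE_yR_{m_n})\,E_xR_{m_n}$, which is the square of the first-moment bound, and no refinement via $f_{m_n}$ can beat Jensen here. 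What the paper actually proves --- and the only thing it ever uses, see e.g.\ \eqref{4.68} --- is
\[
\sum_{y}\E\bigl(P_x(\sigma(y)\le m_n)\bigr)^k
\;\le\;c_5\Bigl(m_n(\log m_n)^{-k}\1_{d=2}+m_n^{1/k}\1_{d\ge3}\Bigr),
\]
namely the $\ell^k$-sum over sites of the single-site hitting probabilities, not the $k$-th moment of $R_{m_n}$; the notation in the statement is misleading. Under this reading the $k\ge2$ cases require no pairing argument at all: the pointwise majorant $P_x(\sigma(y)\le m_n)\le f_{m_n}(|y-x|)$ you already have, raised to the $k$-th power and summed, gives the claim directly --- for $d=2$ this is precisely your computation of $\sum f_{m_n}^k$, and for $d\ge3$ it is $\sum_{|y|\le c\sqrt{m_n}}\bigl((1\vee|y|)^{2-d}\bigr)^k\lesssim m_n^{1/k}$. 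So discard the attempt to organise the double sum; the rest of your sketch is the paper's argument.
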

Notice that by our choices of $\theta_n$ we may use Lemma \ref{lemma3.3} for $m_n\geq \theta_n^{\d}$ for $\d>1/2$. 
\subsection{Specializing Theorem \ref{theorem1.3} for BATM}\label{s3.2}
In this section we specialize Theorem \ref{theorem1.3} to the setting of BATM. 
More precisely, we will not study $S_n^{J,b}$ directly, but another process, $\overline S_n^{J,b}$, to which only those $x$ contribute for which $\tau(x)$ is 'large enough'. For $x\in\Z^d$ we set
\bea\label{3.16}
\gamma_{n}(x)\equiv c_n^{-1} \tau(x) .
\eea
Let  $\epsilon_n(d)\equiv (\log \theta_n )^{-6/(1-\a)} \1_{d=2} +\theta_n^{-1/3} \1_{d\geq 3}$ and denote  the collection of 'large' traps by $T_n\equiv\{x\in \Z^d: \gamma_n(x)>\epsilon_n, \max_{y\sim x}\t(y)\leq\epsilon_n^{-2/\a}\}$. Then,
\be\label{3.17}
\textstyle{\overline S_n^{J,b}(t) \equiv \sum_{k=0}^{k_n(t)-1}\sum_{x\in\Z^d}\gamma_{n}(x)\1_{x\in T_n}\bigl(\ell_{\theta_n(k+1)}(x)-\ell_{\theta_nk}(x)\bigr) , \quad t>0,}
\ee
where $\ell_{t}(y)=\int_0^t \1_{J(s)=y}ds$. 

Roughly speaking, the following lemma states that, $\P$-a.s., $\overline S_n^{J,b}$ is a good approximation for $S_n^{J,b}$. To simplify notation, we write $P\equiv P_0$, respectively $\PP\equiv\PP_0$.
\begin{lemma}\label{lemma3.5}
$\P$-a.s., $\limsup_{n\rightarrow\infty}\PP(\rho_{\infty}(S_n^{J,b},\overline S_n^{J,b})>\d_n)=0$, where $\d_n\equiv\epsilon_n^{(1-\a)/2}$.
\end{lemma}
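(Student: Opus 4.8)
\textbf{Proof proposal for Lemma \ref{lemma3.5}.}
The plan is to show that $S_n^{J,b}$ and $\overline S_n^{J,b}$ differ, with $\PP$-probability tending to zero, by at most $\d_n$ in the uniform metric $\rho_\infty$ on $D[0,T]$ for every $T$, and that this holds for $\P$-almost every realization of the environment. Write the difference as
\be
S_n^{J,b}(t)-\overline S_n^{J,b}(t)=\sum_{k=0}^{k_n(t)-1}\sum_{x\in\Z^d}\gamma_n(x)\1_{x\notin T_n}\bigl(\ell_{\theta_n(k+1)}(x)-\ell_{\theta_n k}(x)\bigr)=c_n^{-1}\sum_{x\notin T_n}\tau(x)\,\ell_{\theta_n k_n(t)}(x),
\ee
which is nondecreasing in $t$; hence $\rho_\infty(S_n^{J,b},\overline S_n^{J,b})$ on $[0,T]$ is bounded by the total contribution $D_n(T)\equiv c_n^{-1}\sum_{x\notin T_n}\tau(x)\ell_{a_nT}(x)$ up to an extra $\theta_n$-block, whose contribution is negligible by (A-0)-type control of a single block. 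So it suffices to prove $\PP(D_n(T)>\d_n)\to0$ for fixed $T$, $\P$-a.s.

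I would split the complement of $T_n$ into two parts: (i) the \emph{small traps}, $\{x:\gamma_n(x)\le\epsilon_n\}$, i.e. $\tau(x)\le c_n\epsilon_n$; and (ii) the traps that are large but have a large neighbor, $\{x:\gamma_n(x)>\epsilon_n,\ \max_{y\sim x}\tau(y)>\epsilon_n^{-2/\a}\}$. For (i), bound $\E_\P\PP\bigl(c_n^{-1}\sum_{\tau(x)\le c_n\epsilon_n}\tau(x)\ell_{a_nT}(x)>\tfrac{\d_n}{2}\bigr)$ by Markov's inequality: the mean is $c_n^{-1}\sum_x \E_\P\!\bigl[\tau(x)\1_{\tau(x)\le c_n\epsilon_n}\bigr]\,E_0\ell_{a_nT}(x)$, and $\sum_x E_0\ell_{a_nT}(x)=a_nT$. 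Since $\alpha<1$, $\E_\P[\tau(0)\1_{\tau(0)\le M}]\asymp M^{1-\alpha}$, so the truncated first moment is of order $(c_n\epsilon_n)^{1-\alpha}$, giving a mean of order $c_n^{-1}a_n(c_n\epsilon_n)^{1-\alpha}=a_nc_n^{-\alpha}\epsilon_n^{1-\alpha}$. With $c_n=n$ and $a_n$ as in \eqref{1.45} one has $a_nc_n^{-\alpha}\asymp(\log n)^{1-\alpha}\1_{d=2}+\1_{d\ge3}$, while $\epsilon_n^{1-\alpha}$ is $(\log\theta_n)^{-6}\1_{d=2}+\theta_n^{-(1-\alpha)/3}\1_{d\ge3}$; in either case this is $o(\d_n)=o(\epsilon_n^{(1-\alpha)/2})$, so by Markov the probability tends to zero, and by a further Borel--Cantelli / monotone-in-$n$ argument (as in Lemma \ref{ergthmzdcor}) it holds $\P$-a.s. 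For (ii), first-moment it in the environment: $\PP(\exists x\in B_{a_nT}\text{ visited by }J,\ \gamma_n(x)>\epsilon_n,\ \max_{y\sim x}\tau(y)>\epsilon_n^{-2/\a})$ is, after conditioning on $J$, at most $E_0 R_{a_nT}\cdot 2d\,\P(\tau(0)>\epsilon_n^{-2/\a})\asymp E_0R_{a_nT}\cdot\epsilon_n^{2}$, and $\E_\P E_0R_{a_nT}$ is controlled by (the $m_n=a_nT$ version of) Lemma \ref{lemma3.3}, giving something like $a_n(\log a_n)^{-1}\1_{d=2}+a_n^{1/2}\1_{d\ge3}$; multiplied by $\epsilon_n^2$ this is again $o(1)$ and in fact summable along a subsequence, so $\P$-a.s. $J$ never even visits such an $x$ for large $n$, whence their contribution to $D_n(T)$ vanishes.

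The main obstacle I expect is bookkeeping the interplay of the three scales $c_n,a_n,\theta_n$ in the two dimensional regimes $d=2$ and $d\ge3$ so that, in \emph{both}, the small-trap estimate beats $\d_n=\epsilon_n^{(1-\alpha)/2}$ with room to spare for summability in $n$ (needed to upgrade ``in $\P$-probability'' to ``$\P$-a.s.''); this forces the specific exponent $6/(1-\alpha)$ in the definition of $\epsilon_n$ when $d=2$ and the truncation $\max_{y\sim x}\tau(y)\le\epsilon_n^{-2/\alpha}$ everywhere. A secondary technical point is justifying that a single $\theta_n$-block's contribution to $D_n$ is negligible uniformly — this is where one invokes that $\theta_n\ll a_n$ together with the crude bound on $c_n^{-1}\sum_x\tau(x)(\ell_{\theta_n(k+1)}(x)-\ell_{\theta_nk}(x))$ over small traps, handled exactly as in (i) with $a_nT$ replaced by $\theta_n$. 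Finally, one assembles: off an event of $\PP$-probability $o(1)$ (valid for $\P$-a.e. $\omega$), $D_n(T)\le\d_n$, and since the difference process is monotone this controls $\rho_\infty$ on $[0,T]$; letting $T\uparrow\infty$ along integers gives $\rho_\infty$ on $[0,\infty)$ in the standard way. $\hfill\square$
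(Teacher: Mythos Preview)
Your overall strategy---reduce to a one-sided bound on $D_n(T)=c_n^{-1}\sum_{x\notin T_n}\tau(x)\ell_{a_nT}(x)$, split $T_n^c$, and use a first-moment/range argument---is in the right spirit, but there are two genuine gaps.

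\textbf{Dependence between the walk and the environment.} In part (i) you write ``the mean is $c_n^{-1}\sum_x \E_\P[\tau(x)\1_{\tau(x)\le c_n\epsilon_n}]\,E_0\ell_{a_nT}(x)$'', i.e.\ you factor the expectation. For $\theta>0$ the VSRW depends on the environment, so $E_0\ell_{a_nT}(x)$ is a function of $\{\tau(z)\}$ and is \emph{not} independent of $\tau(x)$; the factorization is unjustified (and in fact small $\tau(x)$ makes the holding time at $x$ large, so the correlation goes the wrong way for a naive bound). The same problem recurs in (ii), where ``conditioning on $J$'' does not decouple $J$ from $\tau$. The paper circumvents this: it first restricts to the high-probability event $A_n$ on which the heat-kernel bounds of Theorem~\ref{theorem3.1} give \emph{deterministic} upper bounds on $\EE_0\ell_{a_n}(x)$ (separately on three spatial regions $D_1,D_2,D_3$), and only then integrates out $\tau(x)$. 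For the large-trap piece it exploits that $P_0(\min_{y\sim x}\sigma(y)\le a_n)$ does not depend on $\tau(x)$.

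\textbf{The bound in part (ii) fails as written.} Lemma~\ref{lemma3.3} with $k=1$ gives $\E_\P E_0 R_{m_n}\le c_5 m_n$ for $d\ge3$, not $m_n^{1/2}$. With the correct range bound and only the ``large neighbour'' probability $\P(\tau>\epsilon_n^{-2/\a})\asymp\epsilon_n^{2}$, your estimate becomes $a_n\epsilon_n^{2}\asymp n^{\alpha}\theta_n^{-2/3}\to\infty$ for $d\ge3$, so nothing is proved. The paper uses a different split of $T_n^c$: not ``small versus large-with-large-neighbour'' but $B_n=T_n^c\cap\{\gamma_n\le\epsilon_n^{-1}\}$ versus $\{\gamma_n>\epsilon_n^{-1}\}$. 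The second set consists of \emph{very} large traps, rare enough ($\P\asymp c_n^{-\a}\epsilon_n^{\a}$) that the range bound suffices; the first set has $\gamma_n$ bounded by $\epsilon_n^{-1}$, which is what makes the first-moment computation on $B_n$ close. Your split could be made to work if you also retained the factor $\P(\gamma_n(0)>\epsilon_n)\asymp c_n^{-\a}\epsilon_n^{-\a}$ in (ii), but you dropped it.

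A smaller point: the upgrade from ``$\E_\P\PP(\cdots)\to0$'' to ``$\P$-a.s.'' is not provided by Lemma~\ref{ergthmzdcor}; the paper passes to the exponential subsequence $c_{N,r}=e^{N+r}$ and proves summability over $N$, which is where the specific rates (and the constraint $\gamma_3>12/(1-\a)$) enter.
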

\begin{proof}
By definition of $\rho_{\infty}$ it suffices to show this result with $\rho_{\infty}$ replaced by $\rho_r$, Skorohod's $J_1$ metric on $D[0,r]$, for all $r>0$. For convenience we take $r=1$ and we get
\bea\label{3.18}
\PP\Bigl( S_n^{J,b}(1)-\overline S_n^{J,b}(1)>\d_n\Bigr)\hspace{-2mm}&=&\hspace{-2mm}\textstyle{ \PP\Bigl( \int_0^{a_n} \gamma_{n}(J(s)) \1_{J(s)\notin T_n}  >\d_n\Bigr)}\nonumber\\
\hspace{-2mm}&\leq&\hspace{-2mm}\textstyle{\PP\bigl(\int_0^{a_n} \gamma_{n}(J(s)) \1_{J(s)\notin T_n, \g_n(J(s))\leq \epsilon_n^{-1}}  >\d_n\bigr)}\nonumber\\
\hspace{-2mm}&+&\hspace{-2mm}\textstyle{\PP\bigl(\exists x\in \Z^d:  \g_n(x)>\epsilon_n^{-1},\ell_{a_n}(x)>0\bigr)}.\quad\quad
\eea
To shorten notation, set $B_n\equiv T_n^c\cap \{y\in \Z^d: \g_n(y)\leq \epsilon_n^{-1}\}$. 
Using a first order Chebyshev inequality to bound the first term in the right hand side of \eqref{3.18} and Boole's inequality for the second, we get that their sum is bounded above by
\bea\label{3.19a}
\textstyle{\d_n^{-1}\int_{0}^{a_n}\EE\gamma_{n}(J(s)) \1_{J(s)\in B_n} ds +\sum_{x\in \Z^d} P(\ell_{a_n}(x)>0) \1_{\g_n(x)>\epsilon_n^{-1}}}. \quad\quad\eea
In order to establish that \eqref{3.19a} tends $\P$-a.s.~to zero, let us first consider subsequences of the form $c_{N,r}=\exp(N+r)$ for $r\in [0,1]$ and establish that, uniformly in $r\in [0,1)$, \eqref{3.19a} tends $\P$-a.s.~to zero as $N\rightarrow \infty$. Since $c_n=n=\exp(\lfloor \log n\rfloor + (\log n-\lfloor \log n\rfloor ))$ this implies that \eqref{3.19a} vanishes $\P$-a.s.~as $n\rightarrow\infty$. To ease notation we write $a_{N,r}\equiv a_{\exp(N+r)}$ for $r\in [0,1]$ and $N\in \N$,  and use the same abbreviation for all $n$ dependent quantities. 

Now, the first summand in \eqref{3.19a} satisfies
\be\label{3.19}
\textstyle{\sup_{r\in[0,1)}\d_{N,r}^{-1} \sum_{y\in B_{N,r}} \EE\ell_{a_{N,r}}(y)\gamma_{N,r}(y)\leq \d_{N,1}^{-1}\sum_{y\in B_{N,1}} \EE\ell_{a_{N,1}}(y)\gamma_{N,0}(y)},
\ee
and the supremum over $r\in[0,1)$ of the second is bounded above by
\be\label{3.19aa}
\textstyle{\sum_{x\in \Z^d}P(\ell_{a_{N,1}}(x)>0)\1_{\g_{N,0}(x)>\epsilon_{N,1}^{-1}}\leq\sum_{x}P(\max_{y\sim x}\ell_{a_{N,1}}(y)>0)\1_{\g_{N,0}(x)>\epsilon_{N,1}^{-1}}}.
\ee
The lemma will be proven if we can show that the sum of the expectation of the right hand side of \eqref{3.19} and \eqref{3.19aa} with respect to the random environment, that is
\bea \label{3.20}
\textstyle{\sum_{x}\frac{1}{\d_{N,1}}
\E\bigl[\EE\ell_{a_{N,1}}(x) \gamma_{N,0}(x) \1_{ x\in B_{N,1}}\bigr]+\sum_{x}\E\bigl[P(\ell_{a_{N,1}}(x)>0)\bigr] \P(\g_{N,0}(0)>\epsilon_{N,1}^{-1})},\ \ \ 
\eea
tends to zero fast enough. Notice that the second sum in \eqref{3.20} bounds \eqref{3.19aa} because $P(\max_{y\sim x}\ell_{a_{N,1}}(y)>0)$ is independent of $\gamma_{N,0}(x)$. By \eqref{lemma331} of Lemma \ref{lemma3.3}, the second sum in \eqref{3.20} is bounded above by 
\be\label{3.20a}
c_5 a_{N,1} c_{N,0}^{-\a} \epsilon_{N,1}^{\a}(1/\log a_{N,1} \1_{d=2}+\1_{d\geq 3})\leq \epsilon_{N,1}^{\a} ,
\ee
which is summable in $N$. We now decompose the first sum in \eqref{3.20} into three sums according to the size of $|x|$. Namely, we set $D_1\equiv B_{c_0 (\log a_{N,1})^3}$, $D_2\equiv B_{a_{N,1}^{1/2} \log\log a_{N,1}}\setminus D_1$, and $D_3\equiv (B_{a_{N,1}^{1/2} \log\log a_{N,1}})^c$. When $x\in D_1$, we know by \eqref{theorem311} of Theorem \ref{theorem3.1}  that $\EE\ell_{a_{N,1}}(x)\leq c_1\log a_{N,1} $, and so
\bea \label{3.21}
\textstyle{\tfrac{1}{\d_{N,1}}\sum_{x\in D_1} 
\E\bigl[\EE\ell_{a_{N,1}}(x) \gamma_{N,0}(x) \1_{x\in B_{N,1}}\bigr]\leq \tfrac{c_1|D_1|\log a_{N,1}}{\d_{N,1}}
\E\bigl[ \gamma_{N,0}(0)  \1_{0\in B_{N,1}}\bigr]}.\ \ \ 
\eea
One can check that $\E\gamma_{N,0}(0) \1_{0\in B_{N,1}} \leq c c_{N,1}^{-\alpha} \epsilon_{N,1}^{1-\alpha}$, and so \eqref{3.21} is smaller than, say, $c_{N,1}^{-\alpha/2}$ which is summable in $N$. 
For $x\in D_3$ we derive from \eqref{theorem313} and \eqref{theorem311} of Theorem \ref{theorem3.1} that, $\EE\ell_{a_{N,1}}(x)\leq e^{-c_2 |x|^2/ a_{N,1}}+\log a_{N,1}\1_{U_x> |x|}$. Thus, by \eqref{theorem312}  of Theorem \ref{theorem3.1}, 
\bea\label{3.22}
\hspace{-2mm}&&\hspace{-2mm}\tfrac{1}{\d_{N,1}}\textstyle{\sum_{x\in D_3} 
\E\bigl[\EE\ell_{a_{N,1}}(x) \gamma_{N,0}(x)  \1_{x\in B_{N,1}}\bigr]}\nonumber\\
\hspace{-2mm}&\leq&\hspace{-2mm}\tfrac{c}{\d_{N,1}}\textstyle{\sum_{x\in D_3}\bigl\{e^{-c_2/2 |x|^2/ a_{N,1}}c_{N,1}^{-\alpha}+\epsilon_{N,1}^{-1/\a}\log a_{N,1} \P(U_x> |x|)\bigr\}}
\leq e^{-c'(\log\log a_{N,1})^2},\quad\quad\ \ 
\eea
which  is summable in $N$. Finally, let $x\in D_2$. Let $A_{N,1}\equiv A_{\exp(N+1)}(0)$  be as in \eqref{3.7}. In order to bound  the contribution to the first sum in \eqref{3.20}  coming from $D_2$ we distinguish between the events $A_{N,1}$ and $A_{N,1}^c$. By definition of $B_{N,1}$, we have that
\bea
\hspace{-2mm}&&\hspace{-2mm}\tfrac{1}{\d_{N,1}}\textstyle{\sum_{x\in D_2} 
\E\left[\EE\ell_{a_{N,1}}(x) \gamma_{N,0}(x)  \1_{x\in B_{N,1}}\1_{A_{N,1}^c}\right]}
\nonumber\\
\hspace{-2mm}&\leq&\hspace{-2mm} \tfrac{\epsilon_{N,1}}{\d_{N,1}}\textstyle{
\E\left[ \1_{A_{N,1}^c}\sum_{x\in D_2} \EE\ell_{a_{N,1}}(x)\right]}\leq a_{N,1} \P(A_{N,1}^c)\leq a_{N,1}^{-4}.
\eea
On $A_{N,1}$ we have by \eqref{theorem313} of Theorem \ref{theorem3.1} that $\EE\ell_{a_{N,1}}(x)\leq c_{3} |x|^{2-d}$ if $d\geq 3$, and $\EE\ell_{a_{N,1}}(x)\leq c_2\log a_{N,1}$ if $d=2$, and so,
\bea \label{3.23}
\hspace{-2mm}&&\hspace{-2mm}\tfrac{1}{\d_{N,1}}\textstyle{\sum_{x\in D_2} 
\E\left[\EE\ell_{a_{N,1}}(x) \gamma_{N,0}(x)  \1_{x\in B_{N,1}}\1_{A_{N,1}}\right]}\nonumber\\
\hspace{-2mm}&=&\hspace{-2mm}\tfrac{c}{\d_{N,1}}\textstyle{\sum_{x\in D_2}c_{N,1}^{-\alpha} \epsilon_{N,1}^{1-\alpha} \left(|x|^{2-d} \1_{d\geq 3} + \log a_{N,1} \1_{d=2} \right) }\nonumber\\
\hspace{-2mm}&=&\hspace{-2mm}\tfrac{c}{\d_{N,1}}\textstyle{c_{N,1}^{-\alpha} \epsilon_{N,1}^{1-\alpha} \sum_{k=1}^{a_{N,1}^{1/2} \log\log a_{N,1}} 
k\left( \1_{d\geq 3} + \log a_{N,1} \1_{d=2} \right)} \nonumber\\
&\leq& c'\e^{-1} N^{-1-\a}\log N,\quad
\eea
where $c,c'\in(0,\infty)$ and where the last line follows from \eqref{1.44}, \eqref{1.45}, and the construction of $c_{N,r}$. Collecting \eqref{3.20a}-\eqref{3.23}, the proof of Lemma \ref{lemma3.5} is complete.
\end{proof}
Using Theorem \ref{theorem1.3}, we can derive new conditions for the process $\overline S_n^{J,b}$ to converge. To present these conditions, we introduce the following quantities. For $x,y\in \Z^d$, $u>0$, and $\e>0$ we define
\bea\label{3.24}
Q_n^u(x,y)&\equiv& \PP_{x}\left(\ell_{\theta_n}(y)\gamma_{n}(y)> u, \eta(B_{\theta_n}(x))> \theta_n\right) \1_{y\in T_n},\\
M_n^\e(x,y)&\equiv&\EE_x\left(\ell_{\theta_n}(y)\gamma_{n}(y)\1_{\gamma_{n}(y)\ell_{\theta_n}(y)\leq \varepsilon}\1_{\eta(B_{\theta_n}(x))> \theta_n}\right) \1_{y\in T_n}\label{3.25}.
\eea
Note that $Q_n^u(x,y)=M_n^\e(x,y)=0$ for $y\notin B_{\theta_n}(x)$. For $t>0$ we set $d_n(t)\equiv \lfloor a_n t \rfloor^{1/2} \log \lfloor a_n t \rfloor$. For $n\in \N$ and $x\in \Z^d$ we take $h_{n}(x)=\E P(J(n)=x)$. Thus, $\overline\pi_n^t(x)=\E\pi_n^t(x)$. By analogy with \eqref{1.26} and \eqref{1.27} we write, for $u>0$, $t>0$,
\be\label{3.26}
\textstyle{ \wt\nu_n^t(u,\infty)\equiv k_n(t)\sum_{x\in B_{d_n(t)}}\overline\pi_n^t(x)\sum_{y\in \Z^d}Q_n^u(x,y)}, 
\ee
and
\be\label{3.27}
\textstyle{\wt\sigma_n^t(u,\infty)\equiv k_n(t)\sum_{x\in B_{d_n(t)}}\overline\pi_n^t(x)\sum_{y\in \Z^d}(Q_n^u(x,y))^2.}
\ee
We also define for $\varepsilon>0$, $t>0$ 
\be\label{3.28}
\textstyle{m_n^t(\e)\equiv k_n(t)\sum_{x\in B_{d_n(t)}}\overline\pi_n^t(x)\sum_{y\in \Z^d}M_n^\e(x,y)},
\ee
and finally we introduce for $\e>0$ the set
\be\label{3.28a}
{\cal B}_n\equiv\{(x,k)\in B_{d_n(t)}\times [k_n(t)-1]\setminus\{0\}: |x|^2< \e k\theta_n,\  |x|^2> k\theta_n/\e\}.
\ee
We are now ready to present our new conditions. They are stated for fixed $\omega\in \Omega$.

{\bf (C-2)}
For all $u>0$, $t>0$
\be
\textstyle{\lim_{n\rightarrow\infty} \wt\nu_n^t(u,\infty) = t \cal{K} u^{-\alpha}.\label{c2}}
\ee

{\bf (C-3)}
For all $u>0$, $t>0$
\be
\textstyle{\lim_{n\rightarrow\infty}\wt\sigma_n^t(u,\infty)= 0.\label{c3}}
\ee

{\bf (C-4)}
For all $t>0$ there exists $C(t)>0$ such that for all $\varepsilon>0$ 
\be
\textstyle{\limsup_{n\rightarrow\infty}m_n^t(\e)\leq C(t)\varepsilon^{1-\alpha}.\label{c4}}
\ee

{\bf (C-5)}
For all $u>0$, $t>0$, $\e>0$ there exists $C(u,t)\in(0,\infty)$ and $N(\e)$ such that for $n\geq N(\e)$,
\bea
\sum_{(x,k)\in{\cal B}_n}\sum_{y} (k\theta_n)^{-d/2}e^{-c_2|x|^2/k\theta_n}Q_n^u(x,y)\hspace{-2mm}&\leq&\hspace{-2mm} C(u,t)\varepsilon,\label{c51}\\
\sum_{(x,k)\in{\cal B}_n}\sum_{y}(k\theta_n)^{-d/2}e^{-c_2|x|^2/k\theta_n}M_n^\e(x,y)\hspace{-2mm}&\leq&\hspace{-2mm}  C(u,t)\varepsilon.\label{c52}\quad
\eea
\begin{proposition}\label{proposition3.6}
Assume that Conditions (C-2)-(C-5) are satisfied $\P$-a.s. for fixed $u>0$, $t>0$, and $\e>0$. Then, $\P$-a.s., $\overline S_n^{J,b}\stackrel{J_1}{\Longrightarrow} V_\a$, as $n\rightarrow\infty$, where $\stackrel{J_1}{\Longrightarrow}$ denotes weak convergence in the space $D[0,\infty)$ equipped with Skorohod's $J_1$ topology. 
\end{proposition}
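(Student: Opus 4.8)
The plan is to deduce Proposition \ref{proposition3.6} from Theorem \ref{theorem1.3} applied to the truncated clock process $\overline S_n^{J,b}$, which is itself a blocked clock process for the VSRW $J$ but with the modified ``speed'' $\gamma_n(x)\1_{x\in T_n}$ in place of $\wt\lambda(x)\lambda^{-1}(x)=\tau(x)c_n^{-1}$. Thus the relevant increments are $\overline Z_{n,k}^J\equiv\sum_{x}\gamma_n(x)\1_{x\in T_n}(\ell_{\theta_n k}(x)-\ell_{\theta_n(k-1)}(x))$, and the corresponding tail function is $\overline Q_n^u(x)\equiv\PP_x(\overline Z_{n,1}^J>u)$. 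First I would verify the hypotheses of Theorem \ref{theorem1.3}: Conditions (A-0), (A-1) hold for $J$ by the earlier discussion in Section \ref{S3} (which the excerpt announces is established there), since $\mu=\delta_0$ and $J$ is transient for $d\ge3$ and null-recurrent for $d=2$ with the chosen $\theta_n\gg\log n$; and (B-5) is exactly the statement that the local CLT of Theorem \ref{theorem3.1} (equation \eqref{theorem315}) holds uniformly enough to replace $P(J(k\theta_n)=x)$ by $h_{k\theta_n}(x)=\E P(J(k\theta_n)=x)$ on the bulk of space-time, with the complement ${\cal A}_n^2$ absorbing the tails via the Gaussian heat-kernel bounds \eqref{theorem313}.

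The heart of the argument is to show that (C-2)--(C-5) imply (B-2)--(B-4) for $\overline S_n^{J,b}$. The point is a \emph{localization} estimate: by the Markov property, $\overline Q_n^u(x)$ differs from $\sum_y Q_n^u(x,y)$ only by the contribution of paths that either leave $B_{\theta_n}(x)$ before time $\theta_n$, or spend time at two distinct large traps within the block; using Lemma \ref{lemma3.2}(i) to bound $P_x(\eta(B_{\theta_n}(x))\le\theta_n)$ and Lemma \ref{lemma3.3} to bound the range $R_{\theta_n}$ (hence the number of distinct large traps visited), together with the tail $\P(\gamma_n(x)>\epsilon_n)\asymp c_n^{-\a}\epsilon_n^{-\a}$ of the environment and $\e_n\to0$, one shows $\sum_x\overline\pi_n^t(x)|k_n(t)\overline Q_n^u(x)-k_n(t)\sum_y Q_n^u(x,y)|\to0$. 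This reduces (B-2)--(B-3) to (C-2)--(C-3) once one also replaces the full sum over $x\in\Z^d$ by the sum over $x\in B_{d_n(t)}$, which costs only $\sum_{|x|>d_n(t)}\overline\pi_n^t(x)\lesssim\sum_{|x|>d_n(t)}(k\theta_n\vee a_n)^{-d/2}e^{-c|x|^2/a_n}$, a quantity that is superpolynomially small because $d_n(t)=\lfloor a_nt\rfloor^{1/2}\log\lfloor a_nt\rfloor\gg a_n^{1/2}$. The same two reductions, applied to the truncated-increment small-jump expectation, turn Condition (A-4) into (C-4): here (C-4) gives the $C(t)\e^{1-\a}$ bound, so $\lim_{\e\to0}\limsup_n(\cdots)=0$ as required by (B-4). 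Condition (C-5) is used precisely to handle the space-time region ${\cal B}_n$ where $|x|^2/(k\theta_n)$ is either very small or very large, i.e.\ where the Gaussian factor in the local CLT is not comparable to a constant; on ${\cal B}_n^c$ one has $\overline\pi_n^t(x)\asymp (k\theta_n)^{-d/2}e^{-c|x|^2/k\theta_n}$ up to constants by \eqref{theorem315}, and on ${\cal B}_n$ one simply bounds $\overline\pi_n^t(x)$ by the heat-kernel estimate \eqref{theorem313} and invokes \eqref{c51}--\eqref{c52}. Finally, since Theorem \ref{theorem1.3} then yields $\overline S_n^{J,b}\Rightarrow V_\nu$ with $\nu(u,\infty)={\cal K}u^{-\a}$, i.e.\ $V_\nu=V_\a$, in the $J_1$ topology, the proposition follows; one invokes Lemma \ref{ergthmzdcor} to pass from the ``for fixed $u,t,\e$, $\P$-a.s.'' hypotheses to a single full-measure event valid for all $u,t,\e$.

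I expect the main obstacle to be the localization step in $d=2$: there $\ell_{\theta_n}(x)$ is typically of order $\log\theta_n$ rather than $O(1)$, so the truncation level $\e_n=(\log\theta_n)^{-6/(1-\a)}$ and the trap threshold must be balanced delicately against the mean local time, and controlling the error from paths that exit $B_{\theta_n}(x)$ or visit several traps requires the quantitative exit-time bound \eqref{lemma321} with $m_n=\theta_n$, $r_n=\theta_n$ combined with the refined range bound \eqref{lemma332} (the reason (C-5) is phrased with the extra powers $k\in\{1,2,4\}$ available in Lemma \ref{lemma3.3}). A secondary technical point is that the argument must be run along the geometric subsequence $c_{N,r}=e^{N+r}$, exactly as in the proof of Lemma \ref{lemma3.5}, so that the ``for fixed $u,t$'' a.s.\ statements can be upgraded to uniform-in-$r$ a.s.\ statements and hence to an a.s.\ statement along the full sequence $c_n=n$; this monotonicity-in-$t$, monotonicity-in-$u$ bookkeeping is routine but must be done carefully.
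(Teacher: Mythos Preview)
Your proposal is correct and follows essentially the same route as the paper: apply Theorem \ref{theorem1.3} to $\overline S_n^{J,b}$, verify (A-0)--(A-1) directly from the heat-kernel bound \eqref{theorem311} and the subsequence argument, reduce (B-2)--(B-4) to (C-2)--(C-4) via the three error terms you describe (restriction to $B_{d_n(t)}$, the exit-time event $\{\eta(B_{\theta_n}(x))\le\theta_n\}$, and the event that two distinct large traps are hit), and derive (B-5) from the local CLT \eqref{theorem315} on ${\cal A}_n^1$ together with (C-5) and the heat-kernel bound \eqref{theorem313} on ${\cal B}_n$. The paper handles the two-trap term by a first-moment bound using $\theta_n^d\P(y\in T_n)^2$ and Lemma \ref{lemma3.7} rather than the range bound you propose, but either works.
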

The next lemma gives us a very helpful bound for $\pi_n^t(x)$ which we will use in the proof of Proposition \ref{proposition3.6} and in the following sections.
\begin{lemma}\label{lemma3.7}
There exists $c_3\in(0,\infty)$ such that for all $t>0$ and large enough $n$ we have that if $d\geq 3$,
\bea
\overline\pi_n^t(x)\leq 
(k_n(t)\theta_n)^{-1}\begin{cases}
c_3 (|x|\vee 1)^{2-d},  & \mbox{ if }|x|\leq a_n^{1/2} \log \theta_n, \\
c_3 |x|^{2-d}e^{-1/2(\log \theta_n)^2},  & \mbox{ else},
\end{cases}\label{4.76}
\eea
and if $d=2$,
\bea\label{4.74}
\overline\pi_n^t(x)\leq(k_n(t)\theta_n)^{-1}
\begin{cases}
 c_3 (\log (a_n/|x|^2)\vee (\log\log a_n)), & \mbox{ if } |x|\leq a_n^{1/2} \log \log a_n,\\
 e^{-c_2/2 (\log\log a_n)^2 },& \mbox{ if }  a_n^{1/2} \log \log a_n<|x|.
\end{cases}\quad
\eea
\end{lemma}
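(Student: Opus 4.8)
The plan is to start from the representation
$\overline\pi_n^t(x)=(k_n(t))^{-1}\sum_{k=1}^{k_n(t)-1}\E\,q_{k\theta_n}(0,x)$,
which is immediate from the choice $h_n(x)=\E P(J(n)=x)=\E\,q_n(0,x)$ together with the definition \eqref{1.26a}, and then to bound each summand $\E\,q_{k\theta_n}(0,x)$ by means of the heat kernel estimates of Theorem \ref{theorem3.1}. I would split the expectation over the good event $A_n$ of \eqref{3.7} and its complement, $\E\,q_{k\theta_n}(0,x)=\E[q_{k\theta_n}(0,x)\1_{A_n}]+\E[q_{k\theta_n}(0,x)\1_{A_n^c}]$.

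The contribution of $A_n^c$ I would dispose of first and cheaply: using only the deterministic bound $q_{k\theta_n}(0,x)\leq c_1(k\theta_n)^{-d/2}$ from \eqref{theorem311}, this contribution to $\overline\pi_n^t(x)$ is at most $c_1\P(A_n^c)(k_n(t))^{-1}\sum_{k\geq1}(k\theta_n)^{-d/2}$, which is $\leq c_1\P(A_n^c)(k_n(t)\theta_n)^{-1}\,C$ for $d\geq3$ (the series converges) and $\leq c_1\P(A_n^c)(k_n(t)\theta_n)^{-1}\,C\log(a_nt/\theta_n)$ for $d=2$. Since $\P(A_n^c)\leq c_1 n^{-(5\vee 2d)}$ and $a_n,\theta_n$ are as in \eqref{1.44}--\eqref{1.45}, this is of much smaller order than the right-hand sides of \eqref{4.76}, \eqref{4.74}; this step is a routine computation, entirely parallel to the treatment of the event $A_{N,1}^c$ in the proof of Lemma \ref{lemma3.5}, and I would only carry it out after the main term is in place.

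The main work is the $A_n$ term. Here the crucial point is that, by \eqref{1.44}--\eqref{1.45}, $\theta_n^{1/2}$ eventually exceeds $c_0(\log a_n)^3$ (for $d\geq3$ this is where $\gamma_3>6$ is used), so that on $A_n$ one has $(k\theta_n)^{1/2}\geq\theta_n^{1/2}\geq U_0$ for every $k\geq1$ and every large $n$; consequently the hypothesis of the Gaussian upper bound \eqref{theorem313} is met with $y=0$ for every term, giving the environment-free estimate $q_{k\theta_n}(0,x)\leq c_1(k\theta_n)^{-d/2}e^{-c_2|x|\{1\wedge|x|(k\theta_n)^{-1}\}}$ on $A_n$. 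It then remains to estimate $(k_n(t))^{-1}\sum_{k=1}^{k_n(t)-1}(k\theta_n)^{-d/2}e^{-c_2|x|\{1\wedge|x|(k\theta_n)^{-1}\}}$, which I would do by the standard device of comparing the sum with $\theta_n^{-1}\int_{\theta_n}^{a_nt}s^{-d/2}e^{-c_2|x|\{1\wedge|x|s^{-1}\}}\,ds$: the integrand is unimodal in $s$ with a maximum of order $|x|^{-d}$ near $s=|x|^2$, so the sum is at most a constant times $\theta_n^{-1}$ times the integral plus $|x|^{-d}$, and the $|x|^{-d}$ error is absorbed by the main bound precisely when $|x|^2\geq\theta_n$.

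Finally I would evaluate the integral by the substitution $v=|x|^2/s$. For $d\geq3$ the part of the range with $s\geq|x|$ becomes $|x|^{2-d}\int_{|x|^2/(a_nt)}^{|x|^2/\theta_n}v^{d/2-2}e^{-c_2v}\,dv$; since $\int_0^\infty v^{d/2-2}e^{-c_2v}\,dv<\infty$ this is $O(|x|^{2-d})$, while when $|x|$ lies beyond $a_n^{1/2}\log\theta_n$ the lower endpoint of the $v$-integral is large and the tail of the Gamma-type integral produces the stated far-field decay; the part $s<|x|$ of the range contributes at most $e^{-c_2|x|}$ times a bounded factor and is negligible. For $d=2$ the same substitution produces $\int v^{-1}e^{-c_2v}\,dv$, which behaves like $\log(a_nt/|x|^2)$ near its lower endpoint and is exponentially small once $|x|^2$ exceeds $a_nt$, giving the two alternatives in \eqref{4.74}. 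The small-$|x|$ forms — the $(|x|\vee1)^{2-d}$ for $d\geq3$ and the $\log\log a_n$ clipping for $d=2$ — come from the complementary regime $|x|^2\leq\theta_n$, where one instead uses $\sum_{k\geq1}(k\theta_n)^{-d/2}\leq C\theta_n^{-d/2}$ (resp. $C\theta_n^{-1}\log(a_nt/\theta_n)$) together with the elementary inequality $\theta_n^{1-d/2}\leq(|x|\vee1)^{2-d}$, which holds exactly for $|x|\leq\theta_n^{1/2}$, so the two regimes patch together. The main obstacle I anticipate is purely the bookkeeping: one must match the several regimes of $|x|$ (below, comparable to, or above $\theta_n$, and below or above $a_n^{1/2}\log\theta_n$, resp. $a_n^{1/2}\log\log a_n$) so that a single constant $c_3$, independent of $t$, works for all large $n$; but every analytic input needed — the heat kernel bounds on $A_n$, the trivial bound on $A_n^c$, and the Gaussian/Gamma integrals — has already appeared in the proof of Lemma \ref{lemma3.5}, so no genuinely new ingredient is required.
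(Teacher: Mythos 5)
Your proposal is correct and follows essentially the same route as the paper: apply the Gaussian upper bound \eqref{theorem313} on a good event, dispose of the bad event with \eqref{theorem311} (resp.\ \eqref{theorem312}) and the tail of $\P(A_n^c)$, compare the sum over $k$ with $\theta_n^{-1}\int_{|x|\vee\theta_n}^{a_nt}s^{-d/2}e^{-c_2|x|^2/s}\,ds$, and evaluate via the substitution $v=|x|^2/s$ to get incomplete-Gamma asymptotics and the regime-by-regime bounds. The only (immaterial) difference is that the paper splits on the single-site event $\{U_x\leq\theta_n\}$ rather than on $A_n$; both choices make the hypothesis of \eqref{theorem313} available for every $k\geq1$.
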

We first prove Proposition \ref{proposition3.6} assuming Lemma \ref{lemma3.7} and the lemma next.
\begin{proof}[Proof of Proposition \ref{proposition3.6}]
Let us apply Theorem \ref{theorem1.3} to $\overline S_n^{J,b}$. By Lemma \ref{ergthmzdcor} it suffices to prove that the conditions of Theorem \ref{theorem1.3} are verified $\P$-a.s.~for fixed $u>0$, $t>0$, and $\e>0$. 

We first prove that Condition (A-1) is satisfied. By \eqref{theorem311} of Theorem \ref{theorem3.1} we have for all $\omega\in \Omega$, all $x,y\in \Z^d$, and all $t>0$,
\bea\label{3.36}
\textstyle{\sum_{k=1}^{k_n(t)-1}q_{k\theta_n}(x,y)}\leq\textstyle{\sum_{k=1}^{k_n(t)-1}c_1(\theta_nk)^{-d/2}}\leq\textstyle{\theta_n^{-1}\sum_{k=1}^{k_n(t)-1}c_1k^{-1}}\leq \textstyle{2c_1\frac{\log a_n t}{\theta_n}}.
\eea
By \eqref{1.44} and \eqref{1.45} this vanishes as $n\rightarrow\infty$, and hence (A-1) is satisfied $\P$-a.s.

To verify (A-0) and establish that (C-2)-(C-4) $\Rightarrow$ (B-2)-(B-4) we proceed as in the proof of Lemma \ref{lemma3.5} and consider subsequence $c_{N,r}=\exp(N+r)$ first (see the paragraph below \eqref{3.19a}). Since 
$
\overline Z_{N,r, 1}^{J}\equiv
\sum_{x\in\Z^d}\gamma_{n}(x)\1_{x\in T_n}\ell_{\theta_{N,r}(x)}
$
is zero unless there is $y\in T_{N,r}$ for which $\ell_{\theta_{N,r}}(y)>0$,
\be\label{3.35a}
\textstyle{\sup_{r\in[0,1)}\PP(\overline Z_{N,r, 1}^{J}> u)\leq P(\eta(B_{\theta_{N,1}^{3/4}})\leq\theta_{N,1})+\sum_{y:|y|\leq\theta_{N,1}^{3/4}}\1_{y\in T_{N,0}}.}
\ee
By definition of $A$ (see \eqref{3.7a}) and Lemma \ref{lemma3.2}, the first term in the right hand side of \eqref{3.35a} tends $\P$-a.s.~to zero. The second term in the right hand side of \eqref{3.35a} is bounded above, when taking expectation with respect to the random environment, by $\theta_{N,1}^{3d/4} c_{N,0}^{-\a}\epsilon_{N,0}^{-\a}$. This is summable in $N$ and hence (A-0) is satisfied $\P$-a.s.

We establish now that (C-2)-(C-4) $\Rightarrow$ (B-2)-(B-4). First we prove that (C-2) $\Rightarrow$ (B-2), i.e.~that $\sup_{r\in [0,1)}|\overline\nu_{N,r}^t(u,\infty)-\wt \nu_{N,r}^t(u,\infty)|$ tends $\P$-a.s.~to zero as $N\rightarrow\infty$. We have that
\bea
\textstyle{\sup_{r\in[0,1)}|\overline\nu_{N,r}^t(u,\infty)-\wt \nu_{N,r}^t(u,\infty)|\leq \Delta_{N}^1+\Delta_{N}^2+\Delta_{N}^3,}
\eea
where
\bea
\Delta_{N}^1\hspace{-2mm}&\equiv&\hspace{-2mm}\textstyle{\sup_{r\in[0,1)} k_{N,r}(t)\sum_{x\notin B_{d_{N,r}(t)}} \overline\pi_{N,r}^t(x)},\\
\Delta_{N}^2\hspace{-2mm}&\equiv&\hspace{-2mm}\textstyle{\sup_{r\in[0,1)}k_{N,r}(t)\sum_{x\in B_{d_{N,r}(t)}} \overline\pi_{N,r}^t(x)P_x(\eta(B_{\theta_{N,r}}(x))\leq \theta_{N,r})},\\
\Delta_{N}^3\hspace{-2mm}&\equiv&\hspace{-2mm}\textstyle{\sup_{r\in[0,1)}k_{N,r}(t)\sum_{x\in B_{d_{N,r}(t)}}\overline\pi_{N,r}^t(x)\sum_{y,y': |x-y|, |x-y'|\leq \theta_n}\1_{y,y'\in T_{N,r}}}.
\eea
Let us now prove that, $\P$-a.s., $\Delta_{N}^i$ vanishes for $i=1,2,3$ as $N\rightarrow\infty$. We have that
\bea
\Delta_{N}^1\hspace{-2mm}&=&\hspace{-2mm}\textstyle{\sup_{r\in [0,1)} \sum_{k=1}^{k_{N,r}(t)-1} \sum_{x\notin B_{d_{N,r}(t)}}\E P(J(k\theta_{N,r})=x)}\nonumber\\
\hspace{-2mm}&\leq&\hspace{-2mm}\textstyle{\sum_{k=1}^{k_{N,1}(t)} \E P(\eta(B_{d_{N,0}(t)})\leq k\theta_{N,1})}
\leq\textstyle{k_{N,1}(t) e^{-c_4\theta_{N,0}}+k_{N,1}(t)\P(A_{N,1}^c)},\quad\quad \label{3.40}
\eea
where we used \eqref{lemma321} of Lemma \ref{lemma3.2} on $A_{N,1}=A_{\exp(N+1)}$ in the last step. By construction, $k_{N,1}(t) \P(A_{N,1}^c)\leq \exp(-cN)$, and so \eqref{3.40} is bounded above by $\exp(-cN)$.
Thus, $\P$-a.s., $\Delta_{N}^1\rightarrow 0$. The same arguments yield that $\P$-a.s., $\Delta_{N}^2\rightarrow 0$.
Finally, writing 
\be\label{defpi}
\textstyle{\overline\pi_{N}^t(x)\equiv\sup_{r\in[0,1)} \overline \pi_{N,r}^t(x)},
\ee
we get by a first order Chebyshev inequality that
\be\label{belowpi}
\textstyle{\P\left(\Delta_{N}^3>\varepsilon \right)\leq \frac{k_{N,1}(t)}{\e}\sum_{x\in B_{d_{N,1}(t)}}\overline\pi_{N}^t(x)\sum_{y,y'\in B_{\theta_{N,1}}(0), y\neq y'}\E(\1_{y,y'\in T_{N,0}}).}
\ee
By Lemma \ref{lemma3.7} one can show that the sum over $x\in B_{d_{N,1}(t)}$ of $\overline\pi_{N}^t(x)$ is bounded above by $c_3(\log\log a_{N,1})^3$. The sum over $y,y'\in B_{\theta_{N,1}}$ in \eqref{belowpi} is equal to $\theta_{N,1}^d c_{N,0}^{-2\a}\epsilon_{N,0}^{-2\a}$. 
Thus, $\P\left(\Delta_{N}^3>\varepsilon \right)\ll c_{N,0}^{-\a/4}$. 
This is summable in $N$, and so, $\P$-a.s, $\Delta_{N}^3\rightarrow0$. Therefore, (C-2) $\Rightarrow$ (B-2).
In a similar way one can show that (C-3) $\Rightarrow$ (B-3).
We now prove that (C-4) $\Rightarrow$ (B-4). Observe that for $r\in [0,1)$,
\bea\label{3.4111}
\hspace{-2mm}&&\hspace{-2mm}\textstyle{\sum_{x\in \Z^d}\overline\pi_{N,r}^t(x)\EE_x\Bigl[ \overline Z_{{N,r},1}^{J}\1_{\overline Z_{{N,r},1}^{J}\leq \e}\Bigr]}\nonumber\\
\hspace{-2mm}&\leq&\hspace{-2mm}\textstyle{ \Delta_{N}^1+\Delta_{N}^2+ \sum_{x\in B_{d_{N,r}(t)}}\overline\pi_{N,r}^t(x)\EE_x \Bigl[\overline Z_{{N,r},1}^{J}\1_{\overline Z_{{N,r},1}^{J}\leq\e}\1_{\eta(B_{\theta_{N,r}}(x))>\theta_{N,r}}\Bigr]}.\ 
\eea
By \eqref{3.40}, $\Delta_{N}^1,\Delta_{N}^2\rightarrow 0$. It remains to establish that for all $r\in [0,1)$, $m_{N,r}^t(\e)$ is an upper bound for the last term in the right hand side of \eqref{3.4111}. This term is equal to
\be
\textstyle{\sum_{x\in B_{d_{N,r}(t)}}\overline\pi_{N,r}^t(x)\sum_{y\in \Z^d \cap T_{N,r}}\EE_x \Bigl[\gamma_n(y)\ell_{\theta_{N,r}}(y)\1_{\overline Z_{{N,r},1}^{J}\leq \e}\1_{\eta(B_{\theta_{N,r}}(x))>\theta_{N,r}}\Bigr]}.
\ee
Since $\{\overline Z_{{N,r},1}^{J}\leq \e\}\subset \{\ell_{\theta_{N,r}}(y)\gamma_n(y)\leq \varepsilon\}$ for all $y\in \Z^d$ for which $\ell_{\theta_{N,r}}(y)>0$, $m_{N,r}^t(\e)$ bounds the last term in the right hand side of \eqref{3.4111} from above. Thus, (C-4) $\Rightarrow$ (B-4).

Finally we prove that (C-5) $\Rightarrow$ (B-5). The local central limit theorem \eqref{theorem315} of Theorem \ref{theorem3.1} implies that $\P$-a.s., for ${\cal A}_n^1=\{(x,k): k\geq 1,|x|^2\in (\e k\theta_n, k\theta_n/\e) \}$,
\be\label{3.44}
\textstyle{\lim_{n\rightarrow\infty}\sup_{(x,k)\in {\cal A}_n^1}| (k\theta_n)^{d/2}q_{k\theta_n}(x)- (2\pi c_v)^{d/2} e^{-|x|^2/(2c_v k\theta_n )}|=0,}
\ee
where $q_t(x)\equiv q_t(0,x)$. By \eqref{theorem312} of Theorem \ref{theorem3.1}, $(k\theta_n)^{d/2}q_{k\theta_n}(x)\leq c_1$ for all $x\in \Z^d$, $k\in \N$ and so, by bounded convergence, 
\be\label{1.33}
\textstyle{\lim_{n\rightarrow\infty}\sup_{(x,k)\in {\cal A}_n^1}| (k\theta_n)^{d/2}h_{k\theta_n}(x)- (2\pi c_v)^{d/2} e^{-|x|^2/(2c_v k\theta_n T)}|=0,}
\ee
where $h_n(x)=\E q_n(x)$. Thus,  $\P$-a.s.,
\be\label{1.35}
\textstyle{\lim_{n\rightarrow\infty}\sup_{(x,k)\in {\cal A}_n^1}\frac{|q_{k\theta_n}(x)-h_{k\theta_n}(x)|}{h_{k\theta_n}(x)}=0,}
\ee
proving that \eqref{b51} is satisfied for ${\cal A}_n^1$. Hence, it suffices to verify \eqref{b52} and \eqref{b53} for the set ${\cal A}_n^2\equiv\Z^d\times [k_n(t)-1]\setminus {\cal A}_n^1$. The set ${\cal A}_n^2$ is the disjoint union of $[k_n(t)-1]\times \Z^d\setminus B_{d_n(t)}$ and ${\cal B}_n$. Let us now verify \eqref{b52} and \eqref{b53} for each of these sets separately.
Since $\Delta_n^1\rightarrow 0$,  $\P$-a.s., we know that $[k_n(t)-1]\times \Z^d\setminus B_{d_n(t)}$ satisfies \eqref{b52} and \eqref{b53}.
By \eqref{theorem313} of Theorem \ref{theorem3.1} we have that on the event $A_n$ $q_{k\theta_n}(x)\leq c_1 (k/\theta_n)^{-d/2}e^{-c_2|x|^2/k\theta_n}$ for all $x\in B_{d_n(t)}$ and all $k\geq 1$. By construction, $\P(A_n^c)|B_{d_n(t)}|\ll a_n^{d/2} n^{-(4\vee d)}\ll n^{-2}$ which is summable in $n$. Thus, by Borel-Cantelli Lemma, (C-5) implies \eqref{b52} and \eqref{b53} for ${\cal B}_n$. Thus, (C-5) $\Rightarrow$ (B-5). The proof of Proposition \ref{proposition3.6} is complete.
\end{proof}
\begin{proof}[Proof of Lemma \ref{lemma3.7}]
Let us construct a bound on $\overline\pi_n^t$. By \eqref{theorem313}  of Theorem \ref{theorem3.1} we have
\bea\label{4.72} 
\E(\pi_n^t(x) \1_{U_x\leq\theta_n})\hspace{-2mm}&\leq&\hspace{-2mm} \textstyle{\frac{1}{k_n(t)}\Bigl(\sum_{k=\lfloor|x|^2/\theta_n\rfloor\vee1}^{k_n(t)-1}(k\theta_n)^{-d/2}e^{-c_2 |x|/(k\theta_n)} + \sum_{k=1}^{\lfloor|x|/\theta_n\rfloor\wedge1}e^{-c_2|x|}\Bigr)}\ \ \ \nonumber\\
\hspace{-2mm}&\leq&\hspace{-2mm}  \textstyle{ c_1(k_n(t)\theta_n)^{-1} \int_{1/2 |x|\vee\theta_n}^{ a_nt}  s^{-d/2} e^{- c_2 |x|^2 s^{-1}}ds,}\ \ \ \ 
\eea
with the convention that $\sum_{k=1}^0=0$. Let $d\geq 3$ first. We substitute $u=c_2 |x|^2 s^{-1}$ and get
\be\label{4.75}
\textstyle{\E(\pi_n^t(x) \1_{U_x\leq\theta_n}) \leq c''(k_n(t) \theta_n)^{-1} |x|^{2-d}\Gamma\bigl(d/2-2, |x|^2/ a_n\bigr)},
\ee
where $c''\in(0,\infty)$. By \eqref{theorem312}, $\E\pi_n^t(x)\1_{U_x>\theta_n}\leq c_1e^{-c_2 \theta_n^{1/3}}$, and so,  taking $c_3$ large enough we get that \eqref{4.76} holds. 
Now let $d=2$. An asymptotic analysis reveals in \eqref{4.72} that for $|x|\leq\sqrt{a_n/\log a_n}$ we have for some $c'\in(0,\infty)$ that
\be\label{4.73}
\E(\pi_n^t(x) \1_{U_x\leq\theta_n})\leq c'(k_n(t)\theta_n)^{-1}\log(a_n/|x|^2).
\ee
Moreover, when $|x|\geq \sqrt{a_n}\log \log a_n$, $\E(\pi_n^t(x) \1_{U_x\leq\theta_n})\leq e^{-c_2/2 |x|^2/a_n}$. Since \eqref{4.72} is a decreasing function of $|x|$, we may bound $\E(\pi_n^t(x) \1_{U_x\leq\theta_n})\leq \log\log a_n$ for $|x|\in [\sqrt{a_n/\log a_n},\sqrt{a_n}\log \log a_n]$. Since $\E\pi_n^t(x)\1_{U_x>\theta_n}\leq c_1e^{-c_2/2 \theta_n^{1/3}}$, this proves \eqref{4.74}. This finishes the proof of Lemma \ref{lemma3.7}.
\end{proof}


\section{Verification of Conditions (C-2)-(C-5)}\label{S4}
In this section we show that (C-2)-(C-5) are satisfied. Let $u>0$, $t>0$ and $\e>0$ be fixed. In Section \ref{S4.1} we establish that $\lim_{n\rightarrow\infty}\E\wt \nu_n^t(u,\infty)=t \nu(u,\infty)$. Next, in Section \ref{S4.2}, we prove that $\P$-a.s., $\lim_{n\rightarrow\infty}\wt \nu_n^t(u,\infty)=t \nu(u,\infty)$. In Section \ref{S4.3} we prove that $\lim_{n\rightarrow\infty}\E\wt \sigma_n^t(u,\infty)=0$ and show that, $\P$-a.s.~$\wt \sigma_n^t(u,\infty)$ tends to zero. In Section \ref{S4.4} we establish that $\E m_n^t(\e)\leq C(t)\e^{1-\a}$ and show that $m_n^t(\e)$ concentrates $\P$-a.s.~around its mean value. We verify Condition (C-5) in Section \ref{S4.4}. Finally, we conclude the proof of Theorem \ref{theorem1.3} in Section \ref{S4.5}. 
\subsection{Convergence of $\E\wt\nu_n^t(u,\infty)$}\label{S4.1}
This section is devoted to the proof of convergence of $\E\wt\nu_n^t(u,\infty)$, which is the most demanding part of the proof of Theorem \ref{theorem1.3}. 
\begin{lemma}\label{lemma4.1}
 For all $u>0$, $t>0$, $\lim_{n\rightarrow\infty}\E\wt{\nu}_n^t(u,\infty)= t \nu(u,\infty)$.
\end{lemma}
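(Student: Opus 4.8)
The plan is to first remove everything that has already been averaged out. Since $k_n(t)$ and $\overline\pi_n^t$ are deterministic, $\E\wt\nu_n^t(u,\infty)=k_n(t)\sum_{x\in B_{d_n(t)}}\overline\pi_n^t(x)\sum_{z\in\Z^d}\E Q_n^u(x,z)$, where for each fixed $x$ the inner sum ranges over the finite set $B_{\theta_n}(x)$ only. Because the field $\{\tau(z)\}_{z\in\Z^d}$ is i.i.d.\ and both the VSRW dynamics and the set $T_n$ are translation covariant, $\E Q_n^u(x,z)$ depends on $(x,z)$ only through $z-x$, so $g_n(u):=\sum_z\E Q_n^u(x,z)$ does not depend on $x$. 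Moreover $\sum_{x\in\Z^d}\overline\pi_n^t(x)=1-1/k_n(t)$, each $h_{k\theta_n}$ being a sub-probability of total mass one, and $k_n(t)\sum_{x\notin B_{d_n(t)}}\overline\pi_n^t(x)\to0$ by the estimate already established for $\Delta_N^1$ in the proof of Proposition \ref{proposition3.6}. Hence it suffices to prove that $k_n(t)\,g_n(u)\to t\,\mathcal K u^{-\alpha}$, equivalently (since $k_n(t)\theta_n\sim a_nt$) that $g_n(u)\sim(\theta_n/a_n)\,\mathcal K u^{-\alpha}$.

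For the single-trap quantity $g_n(u)=\sum_z\E Q_n^u(0,z)$ the plan is to decompose each $Q_n^u(0,z)$, $z\in T_n$, at the first hitting time $\sigma(z)$. On $\{\sigma(z)\le\theta_n\}$ the strong Markov property expresses $\ell_{\theta_n}(z)$ as the local time at $z$ of the chain restarted from $z$ and run for the residual time $\theta_n-\sigma(z)$; restricting to early hitting $\{\sigma(z)\le\theta_n/2\}$ (the complementary contribution being negligible after summation over $z$ by \eqref{lemma331}) and to the good events $A_n$, one sandwiches $Q_n^u(0,z)$ between $\PP_0\big(\sigma(z)\le\theta_n/2,\ \eta(B_{\theta_n})>\theta_n\big)\,\PP_z\big(\ell_{\theta_n/2}(z)\gamma_n(z)>u\big)$ and $\PP_0(\sigma(z)\le\theta_n)\,\PP_z\big(\ell_{\theta_n}(z)\gamma_n(z)>u\big)$, the discrepancy being a $(1+o(1))$ factor controlled by the heat-kernel bounds of Theorem \ref{theorem3.1} and the exit-time bounds of Lemma \ref{lemma3.2}. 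Summing the hitting factor over $z$ produces $\E E_0 R_{\theta_n}$, which by \eqref{lemma331} is of order $\theta_n(\log\theta_n)^{-1}$ for $d=2$ and $\theta_n$ for $d\ge3$.

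The third ingredient is the limiting law of the local time at an isolated trap. For $d\ge3$ the VSRW is transient, so its local time at $z$ started from $z$ is $\mathrm{Exp}(p_z\wt\lambda(z))$-distributed with $p_z$ the escape probability; as $\tau(z)\to\infty$ with the rest of the environment frozen the walk ``glues'' $z$ to its neighbours and $p_z\wt\lambda(z)\to\kappa(z)$ for a local functional $\kappa(z)$, whence $\PP_z(\ell_{\theta_n}(z)\gamma_n(z)>u)=(1+o(1))\exp(-\kappa(z)\,u\,n/\tau(z))$. For $d=2$ the recurrent VSRW is in the Darling--Kac regime: Theorem \ref{theorem3.1} gives $\int_0^{\theta_n}q_t(z,z)\,dt\asymp\log\theta_n$, and $\ell_{\theta_n}(z)/\log\theta_n$ converges, uniformly over the relevant $z$, to an $\mathrm{Exp}(\mu_z)$ variable, so $\PP_z(\ell_{\theta_n}(z)\gamma_n(z)>u)=(1+o(1))\exp\big(-\mu_z\,u\,n/(\tau(z)\log\theta_n)\big)$. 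Since under $\P$ the variable $\tau(z)$ is independent of $\kappa(z)$ (resp.\ $\mu_z$) and, up to an $O(1/\tau(z))$ correction from the gluing, of the hitting event as well, one integrates $\tau(z)$ out against its tail $Cv^{-\alpha}(1+L(v))$; the substitution $v=nw$ (resp.\ $v=(n/\log\theta_n)w$) and the regular variation of the tail give $\int e^{-\kappa un/v}\,d(-Cv^{-\alpha})\sim C\alpha\Gamma(\alpha)\kappa^{-\alpha}(un)^{-\alpha}$, and averaging over $\kappa$ (resp.\ $\mu$), multiplying by $\E E_0 R_{\theta_n}$, and inserting \eqref{1.44}--\eqref{1.45} yields $g_n(u)\sim(\theta_n/a_n)\,\mathcal K u^{-\alpha}$ with $\mathcal K$ an explicit positive constant built from $C,\alpha,\theta,d$ and the Green's-function and escape-probability constants of the VSRW. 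All discarded terms --- $z\notin B_{d_n(t)}$, the event $A_n^c$, $U_z$ atypically large, late hitting, and the $(1+o(1))$ corrections --- are $o(g_n(u))$ by the now-standard combination of Lemmas \ref{lemma3.2} and \ref{lemma3.3}, the stretched-exponential tail \eqref{theorem312}, and Borel--Cantelli, as in the proofs of Lemma \ref{lemma3.5} and Proposition \ref{proposition3.6}.

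The step I expect to be hardest is the decoupling just used: the hitting probability $\PP_0(\sigma(z)\le\theta_n)$ and the law of the local time at $z$ both depend on the environment near $z$, so one must establish, uniformly over all $z$ with $\tau(z)$ in the relevant window, that freezing $\tau$ on $B_{\theta_n}(0)\setminus B_{r_n}(z)$ for a slowly growing $r_n$ decouples the two up to a $(1+o(1))$ factor; this is where the heat-kernel and local central limit estimates of Theorem \ref{theorem3.1} are essential, and it forces $\theta_n$ to be large enough for those asymptotics to apply while remaining $\ll a_n$, which is exactly the role of the constraints on $\gamma_2$ and $\gamma_3$ in \eqref{1.44}. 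The two-dimensional case is the more delicate one, since there the local time is only asymptotically exponential after the $\log\theta_n$ renormalization and the required uniformity has to be obtained in the spirit of \eqref{theorem315}.
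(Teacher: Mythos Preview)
Your overall architecture matches the paper's: reduce by translation invariance to $k_n(t)\sum_z\E Q_n^u(0,z)$, decompose at the hitting time $\sigma(z)$, sandwich the residual local time between local times up to exit from balls (which are exponential with mean $g_{B_n^i}(z)$), and integrate out the heavy tail of $\tau(z)$ to extract the factor $\Gamma(1+\alpha)u^{-\alpha}c_n^{-\alpha}$. For the decoupling from $\tau(z)$ itself the paper uses exactly the device you describe as ``gluing'': it replaces $P(\sigma(z)\le\cdot)$ by $P(\min_{z'\sim z}\sigma(z')\le\cdot)$ and $g_{B_n^i}(z)$ by the modified Green function $\wt g_{B_n^i}(z)$ obtained by pinning $f\equiv1$ on $N(z)=\{z\}\cup\{z':z'\sim z\}$ in the variational formula, both of which are independent of $\tau(z)$; the comparison $\wt g\le g\le(1+\e)\wt g$ is taken from \cite{BaCe11}.

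The one genuine divergence, and the place where your plan is incomplete, is the endgame for $d\ge3$. After integrating out $\tau(z)$ you are left with
\[
\theta_n^{-1}\sum_{z}\E\bigl[\wt g_\infty^\alpha(z)\,P(\textstyle\min_{z'\sim z}\sigma(z')\le\theta_n)\bigr],
\]
and you propose to factor this as $\E[\kappa^{-\alpha}]\cdot\theta_n^{-1}\E E_0 R_{\theta_n}$. That requires two things you do not establish: (i) convergence of $\theta_n^{-1}\E E_0 R_{\theta_n}$ to a limit (Lemma \ref{lemma3.3} gives only the upper bound), and (ii) the asymptotic decorrelation of $\wt g_\infty(z)$ from the hitting event, which you correctly flag as the hardest step. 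The paper sidesteps both issues entirely: it sets $f_n^\beta=\sum_z\E[(\wt g_\infty(z)/c_6)^\beta P(\min_{z'\sim z}\sigma(z')\le n)]$ and proves a \emph{quasi-subadditivity} inequality $f_{n+m}^\beta\le(1+\e)f_m^\beta+f_n^\beta$ for $n,m$ large (using the local CLT of Theorem \ref{theorem3.1} to control $\sum_z\E|q_n(z)-\E q_n(z)|$), from which $f_n^\beta/n\to K'=\inf_n f_n^\beta/n\in(0,\infty)$ follows by a Fekete-type argument. This yields the limit directly, with no factorization and no decoupling beyond that already achieved by $\wt g$. Your route may well work, but the decorrelation you would need is not obviously a consequence of the heat-kernel estimates alone.

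For $d=2$ your plan essentially coincides with the paper's: there $\wt g_{B_n^i}(z)/(\bar K\log\theta_n)\to1$ (Proposition~3.1 of \cite{Ce11}), so the local-time normalization becomes deterministic, the factorization is legitimate, and the paper does compute $\lim_n(\log\theta_n/\theta_n)\E[ER_{\theta_n}\mid A_n]=\bar K^{-1}$ explicitly via sandwiching $\theta_n=\sum_z E\ell_{\theta_n}(z)$ between hitting-probability-weighted local times.
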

\begin{proof}[Proof of Lemma \ref{lemma4.1}]
Since the $\tau$'s are identically distributed we have
\be\label{4.1}
\textstyle{\sum_{y\in \Z^d}\E Q_n^u(x,y)=\sum_{y\in \Z^d}\E Q_n^u(0,y)}, \quad \forall x\in B_{d_n}(t).
\ee
The statement of the lemma is thus equivalent to
\be\label{4.2}
\textstyle{\lim_{n\rightarrow\infty} k_n(t)\sum_{y\in \Z^d}\E Q_n^u(0,y) = t \nu(u,\infty)}, \quad \forall u>0, \ \forall t>0. 
\ee
In view of \eqref{3.24}, the sum in \eqref{4.2} is over $y\in B_{\theta_n}$. In fact, we can restrict it to $y\in B_{\theta_n}\setminus\{0\}$ because $\E Q_n(0,0)\leq \E(\1_{0\in T_n})= c_n^{-\a} \epsilon_n^{-\a}\ll k_n(t)$. Also, we have that
\be\label{4.2i}
\textstyle{\P(A_n)\E(\wt{\nu}_n^t(u,\infty)|A_n)\leq \E\wt{\nu}_n^t(u,\infty)\leq \P(A_n)\E(\wt{\nu}_n^t(u,\infty)|A_n)+ k_n(t)\theta_n^{d}\P(A_n^c)},
\ee
where $A_n$ is as in \eqref{3.7} and satisfies $k_n(t)\theta_n^{d}\P(A_n^c)\leq c_1 a_n \theta_n^{d-1} n^{-5}$. Therefore, it suffices to calculate $\E(\wt{\nu}_n^t(u,\infty)|A_n)$.  Let us also distinguish two cases depending on whether $d\geq 3$ or $d=2$.

\noindent \textbf{\emph{Case 1.}} Let $d\geq 3$ and take $y\in B_{\theta_n}\setminus\{0\}$. Set $k(\theta_n)=\theta_n(\log \theta_n)^{-1}$ and $h(\theta_n)=\theta_n-k(\theta_n)$. By the Markov property, writing $f_{\sigma(y)}$ for the density function of the hitting time, $\sigma(y)$, of $y$, we have on $A_n$
\bea
 \hspace{-1mm}&&\hspace{-1mm}\PP(\ell_{\theta_n}(y)\gamma_{n}(y)> u,\eta(B_{\theta_n})>\theta_n)\nonumber\\ \hspace{-1mm}
 &\geq&\hspace{-1mm}\textstyle{\int_0^{\theta_n} f_{\sigma(y)}(t)\PP_y (\ell_{\theta_n-t}(y)\gamma_{n}(y)> u)dt}-P(\eta(B_{\theta_n})\leq\theta_n)\nonumber\\
\hspace{-1mm}&\geq&\hspace{-1mm}\textstyle{ \int_0^{h(\theta_n)} f_{\sigma(y)}(t)dt\ \PP_y (\ell_{k(\theta_n)}(y)\gamma_{n}(y)> u)}-e^{-c_4 \theta_n^{1/2}}\nonumber\\
\hspace{-1mm}&=&\hspace{-1mm}\textstyle{P(\sigma(y)\leq h(\theta_n))\ \PP_y (\ell_{k(\theta_n)}(y)\gamma_{n}(y)> u) -e^{-c_4 \theta_n^{1/2}},}\label{4.4}
\eea
where we used \eqref{lemma321} of Lemma \ref{lemma3.2} in the second step. We first deal with the second probability in \eqref{4.4}. Setting $B_n^1\equiv B_{\sqrt{k(\theta_n)} (\log \theta_n)^{-2}}(y)$ we have,
\bea\label{4.5}
\PP_y (\ell_{k(\theta_n)}(y)\gamma_{n}(y)> u)
 &\geq& \PP_y (\ell_{\eta(B_n^1)}(y)\gamma_{n}(y)> u, \eta(B_n^1)< k(\theta_n))\nonumber\\
 &\geq& \PP_y (\ell_{\eta(B_n^1)}(y)\gamma_{n}(y)> u ) -  \PP_y (\eta(B_n^1)\geq k(\theta_n)). \quad \quad
\eea
By \eqref{lemma322} of Lemma \ref{lemma3.2}, on $A_n$, the second term in \eqref{4.5} is smaller than $e^{-c_4 (\log \theta_n)^2}$.
To bound the first term in \eqref{4.5} we use the well-know fact that when $J$ starts in $y$, $\ell_{\eta(B_n^1)}(y)$ is exponentially distributed.  
Let
\bea\label{4.7}
\textstyle{g_{B_n^1}(y)\equiv E_y\left[\int_0^{\eta(B_n^1(y))} \1_{J(s)=y} ds\right]}
\eea
denote the mean value of $\ell_{\eta(B_n^1}(y)$.
Eq. \eqref{4.4} then becomes
\be\label{4.8}
\textstyle{\PP_y (\ell_{k(\theta_n)}(y)\gamma_{n}(y)> u)\geq e^{- u\left(\gamma_{n}(y)g_{B_n^1}(y)\right)^{-1}} - e^{-c_4 (\log \theta_n)^2}},
\ee
and we get
\be\label{4.9}
Q_n^u(0,y)\geq P(\sigma(y)\leq h(\theta_n)) \bigl(e^{-u(\gamma_n(y) g_{B_n^1}(y))^{-1}} - e^{-c_4 (\log \theta_n)^2}\bigr)\1_{y\in T_n}.
\ee
To get an upper bound we write (using the Markov property)
\bea\label{4.10}
\textstyle{\PP (\ell_{\theta_n}(y)\gamma_{n}(y)> u,\eta(B_{\theta_n})>\theta_n)}&\leq&\textstyle{\PP (\ell_{\theta_n}(y)\gamma_{n}(y)> u) }\nonumber\\&=&\textstyle{\int_0^{\theta_n} f_{\sigma(y)}(t)\PP_y (\ell_{\theta_n-t}(y)\gamma_{n}(y)> u)dt}\nonumber\\
&\leq&P\left(\sigma(y)\leq \theta_n\right) \PP_y (\ell_{\theta_n}(y)\gamma_{n}(y)> u).\quad \quad
\eea
Set $B_n^2\equiv B_{\sqrt{\theta_n} \log \theta_n}(y)$. By \eqref{lemma321} of Lemma \ref{lemma3.2} we know on $A_n$ that $J$ exits $B_n^2$ before time $\theta_n$ with a probability smaller than $e^{-c_4(\log \theta_n)^2}$. Thus, proceeding as in \eqref{4.5}
\bea
Q_n^u(0,y)\leq P\left(\sigma(y)\leq \theta_n\right) \bigl(e^{-u(\gamma_n(y) g_{B_n^2}(y))^{-1}} + e^{-c_4 (\log \theta_n)^2}\bigr)\1_{y\in T_n}\label{4.12}
\eea
The contribution to $\E\wt \nu_n^t(u,\infty)$ coming from the error terms $\exp(-c_4(\log \theta_n)^2)$ in \eqref{4.9} and \eqref{4.12} is negligible because
\be\label{4.13}
\textstyle{k_n(t) \sum_{|y|\leq \theta_n} \E\bigl[\1_{y\in T_n}e^{-c_4(\log \theta_n)^2}\bigr]\ll e^{-c_4/2(\log \theta_n)^2}.}
\ee
To calculate $\E( Q_n^u(0,y)|A_n)$, we distinguish whether $\theta>0$ or $\theta=0$. In the first case several objects depend on the random environment: the distribution of $\sigma(y)$, the mean local time $g_{B_n^i}(y)$, and $\gamma_n(y)$. Thus we first seek upper and lower bounds on the distribution of $\sigma(y)$ and on $g_{B_n^i}(y)$ that are independent of $\gamma_n(y)$. Moreover, we look for upper and lower bounds for $g_{B_n^i}(y)$ that are independent of $n$. 

Let us begin with bounds for $P\left(\sigma(y)\leq \theta_n\right)$. We show now that we may approximate the distribution of $\sigma(y)$ by that of $\min_{y'\sim y}\sigma(y')$, which is independent of $\gamma_n(y)$. Since $y\neq 0$ we know that $\min_{y'\sim y} \sigma(y')\leq \sigma(y)$, implying that
\be\label{4.14}
\textstyle{P(\sigma(y)\leq \theta_n)\leq P(\min_{y'\sim y} \sigma(y')\leq \theta_n).}
\ee
By the definition of $T_n$ we know that all the traps in the neighborhood $y\in T_n$ have size smaller than $\epsilon_n^{-2/\a}$. 
This implies that, as soon as $J$ visits a neighbor $y'$ of $y$, it jumps to $y$ with probability larger than $1-2d (\epsilon_n^{-2/\a}c_n^{-1})^{\theta}$. This term goes to $1$ when $\theta>0$ and we get, for all $\varepsilon>0$ and $y'\sim y$, that
\bea
P_{y'}( \varepsilon < \sigma(y)\leq h(\theta_n)) \leq \epsilon_n^{-2\theta/\a} c_n^{-\theta}\ll c_n^{-\theta/2}.\label{4.16}
\eea 
Thus,
\be\label{4.17}
\textstyle{P\left(\sigma(y)\leq h(\theta_n)\right) \1_{y\in T_n}\geq \bigl(P\left(\min_{y'\sim y} \sigma(y')\leq h(\theta_n)\right)-c_n^{-\theta/2}\bigr)\1_{y\in T_n}.}
\ee
As in \eqref{4.13}, we see that the contribution of the error $c_n^{-\theta/2}$ to $\E\wt \nu_n^t(u,\infty)$ is of order $o(1)$. 

Let us now approximate $g_{B_n^i}(y)$ by random variables, $\wt g_{\infty}(y)$, that are independent of $\gamma_n(y)$. This approximation follows closely the ideas of \cite{BaCe11}. For $i=1,2$ we use the classical variational representation (see e.g. Chapter 3 in \cite{Bov09}) to write
\be\label{4.18}
\textstyle{(g_{B_n^i}(y))^{-1}=\inf\left\{\tfrac12 \sum_{ x\sim z}\wt \lambda(x,z)(f(x)-f(z))^2: f|_y=1, f|_{B_n^i}=0\right\}}, 
\ee
and define, setting $N(y)\equiv\{y': y'\sim y\}\cup \{y\}$,
\be\label{4.19}
\textstyle{(\wt g_{B_n^i}(y))^{-1}\equiv\inf\left\{\tfrac12 \sum_{ x\sim z}\wt \lambda(x,z)(f(x)-f(z))^2: f|_{N(y)}=1, f|_{B_n^i}=0\right\}}.
\ee
As in the proof of Lemma 6.2 in \cite{BaCe11} one can show on $A_n$ that for all $\e>0$ there exists $N(\e)$ uniform in the realization of the random environment, such that for $n\geq N(\e)$,
\be\label{4.20}
\wt g_{B_n^i}(y)\leq g_{B_n^i}(y)\leq (1+\e) \wt g_{B_n^i}(y), \quad \forall y\in T_n \cap B_{\theta_n}.
\ee
Combining \eqref{4.17}, 
\eqref{4.20}, \eqref{4.9}, and \eqref{4.2i} we get that $\E\wt\nu_n^{t}(u,\infty)$ is bounded below by
\bea\label{4.21}
\textstyle{ k_n(t)\sum_{|y|\leq\theta_n}\E\bigl[P(\min_{y'\sim y}\sigma(y')\leq h(\theta_n))e^{-u(\gamma_n(y) \wt g_{B_n^1}(y))^{-1}}\1_{y\in T_n}|A_n\bigr]-o(1).}
\eea
Similarly, we obtain by \eqref{4.17}, \eqref{4.20}, and \eqref{4.12} that  $\E\wt\nu_n^{t}(u,\infty)$ is smaller than
\bea\label{4.22}
\textstyle{ k_n(t)\sum_{|y|\leq\theta_n}\E\bigl[P(\min_{y'\sim y}\sigma(y')\leq \theta_n)e^{-u(1-\e_n)(\gamma_n(y) \wt g_{B_n^2}(y))^{-1}}\1_{y\in T_n}|A_n\bigr]+o(1).}
\eea
Let $g_{\infty}(y)=\lim_{n\rightarrow \infty} g_{B_n^1}(y)= \lim_{n\rightarrow \infty} g_{B_n^2}(y)$. As in the proof of Lemma 3.5 in \cite{BaCe11}, one can show that, on the event $A_n$, for all $\e'>0$,  there exists $N(\e')$, uniform in the random environment, such that for $n\geq N(\e')$ we have 
\be\label{4.23}
(1-\e')g_{\infty}(y)\leq g_{B_n^i}(y)\leq g_{\infty}(y), \quad \quad \forall y\in B_{d_n(t)}.
\ee
This with \eqref{4.20} implies that for all $\e''>0$ there exists $N(\e'')$ such that for $n\geq N(\e'')$, for all $y\in T_n \cap B_{\theta_n}$, $(1-\e'')\wt g_{\infty}(y)\leq g_{B_n^i}(y)\leq (1+\e'') \wt g_{\infty}(y)$, 
where $\wt g_{\infty}(y)=\lim_{n\rightarrow \infty} \wt g_{B_n^1}(y) = \lim_{n\rightarrow \infty} \wt g_{B_n^2}(y)$. Equipped with \eqref{4.21} and \eqref{4.2i} 
we take expectation with respect to $\gamma_n(y)$ and obtain that $\E\wt \nu_n^t(u,\infty)$ is bounded below by 
\bea\label{4.25}
\textstyle{\frac{t(1-\e'')^{\a}\Gamma(1+\a, \epsilon_n)}{u^{\a}\theta_n} \sum_{|y|\leq\theta_n}\E\bigl[\wt g_{\infty}^\a(y) P(\min_{y'\sim y}\sigma(y')\leq h(\theta_n))(1-\1_{T_n(y)^c})\bigr]}-o(1),\quad
\eea
where $T_n(y)^c=\{\max_{y'\sim y}\tau(y')> \epsilon_n^{-2/\a}\}$. The contribution to \eqref{4.25} coming from $T_n(y)^c$ is of order $o(1)$:  using \eqref{4.20}, and \eqref{theorem311} of Theorem \ref{theorem3.1}, and proceeding as in \eqref{4.14}, it is by Lemma \ref{lemma3.3} smaller than $c_1c_5 4d^2\epsilon_n^{2}$. 
Also, as in the proof of Lemma \ref{lemma3.3} one sees that adding $|y|>\theta_n$ in \eqref{4.25} produces at most an error of the order of $e^{-c_4/2\theta_n}$, and so
\bea\label{4.26}
\E\wt \nu_n^t(u,\infty)\geq\frac{t(1-\e'')^{\a}\Gamma(1+\a, \epsilon_n)}{u^{\a}\theta_n} \sum_{y\in \Z^d}\E\bigl[\wt g_{\infty}^\a(y) P(\min_{y'\sim y}\sigma(y')\leq h(\theta_n))\bigr]-o(1).\quad
\eea
Similarly, 
\bea\label{4.27}
\E\wt \nu_n^t(u,\infty)\leq\frac{t\Gamma(1+\a)}{u^{\a}\theta_n} \sum_{y\in \Z^d}\E\bigl[\wt g_{\infty}^\a(y) P(\min_{y'\sim y}\sigma(y')\leq \theta_n)\ \bigr]+o(1).
\eea
Since $\epsilon_n\rightarrow 0$, $\Gamma(1+\a,\epsilon_n)\rightarrow \Gamma(1+\a)$. It remains to establish that
\be\label{4.28}
\textstyle{\lim_{n\rightarrow\infty}\theta_n^{-1}\sum_{y\in \Z^d}\E\bigl[ \wt g_{\infty}^\a(y)P(\min_{y'\sim y}\sigma(y')\leq h_i(\theta_n))\bigr] =K}, \quad \mbox{ for } i=1,2,
\ee
where $h_1(\theta_n)=\theta_n$ and $h_2(\theta_n)=h(\theta_n)$. Since $h_2=h_1-o(h_1)$, we only present the proof for $h_1$.  For $\b\in[0,1]$, set 
$f_{n}^\b(x)\equiv \sum_{y\in \Z^d}\E\bigl[ (\wt g_{\infty}(y)/c_6)^\b P(\min_{y'\sim y}\sigma(y')\leq n)\bigr]$, where $c_6\in(0,\infty)$ is such 
that $c_6\geq g_{\infty}(y)\geq \wt g_{\infty}(y)$ for all $y\in \Z^d$. 
Using a 'quasi' sub-additivity argument (see \eqref{4.28aa} below), we now establish that $\lim_{n\rightarrow\infty}f_n^\b/n=K'$ where 
$K'=\inf\{f_n^\b/n: n\in \N\}\in(0,\infty)$. First note that by Lemma \ref{lemma3.3} $ f_n^\b/n\leq f_n^0/n\leq 2d c_5 $, and so, $K'<\infty$. To see 
that $K'>0$, we use that $f_m^\b\geq f_m^1$ and bound $f_m^1$ from below:
\bea\label{4.30}
\textstyle{ \sum_{y\in \Z^d}g_{\infty}(y) P(\sigma(y)\leq m)\geq \sum_{y\in \Z^d}E\ell_{m}(y)\geq m}
\eea
As in the proof of \eqref{4.20} one can show that $\wt g_{\infty}(y)\geq (2d)^2 g_{\infty}(y)$, and hence $f_m^1\geq (2d)^{-2}m$, which proves that 
$K'>0$. Let us now assume that for all $\e>0$ there exists $N$ large enough such that for all $n,m\geq N$,
\be\label{4.28aa}
f_{n+m}^\b\leq(1+\e)  f_{m}^\b +f_{n}^\b.
\ee
Then convergence to $K'$ follows. Indeed, by construction of $K'$ there exists $M$ such that $f_{M}^\b/M< K'+\e/2$. Now, let $N^\star=N'M$, $N'\geq N$, be such that $f_{2M}^\b/N^\star<\e/2$. For $n\geq N^\star$ write $n=sM+r\geq N^\star$ where $s\geq N$, $r\leq M$. Then, 
by \eqref{4.28aa},
\be\label{4.28a}
f_{n}^\b/n\leq (1+\e)\tfrac{s-1}{n} f_{M}^\b + f_{M+r}^\b/n\leq (1+\e) \tfrac{s-1}{s+2}f_M^\b/M + f_{2M}^\b/N^\star\leq (1+2\e) K'-\e.
\ee
Thus, $f_n^\b/n$ converges to $K'$ because by construction, $f_n^\b/n\geq K'$. 
It remains to establish the claim of \eqref{4.28aa}. The difference $f_{n+m}^\b-f_{n}^\b$ is equal to
\bea
\hspace{-4mm}&&\hspace{-2mm}\textstyle{\sum_{y\in \Z^d} \E\bigl[(\tfrac{\wt g_{\infty}(y)}{c_6})^\b P(\min_{y'\sim y}\sigma(y')\in(n,n+m))\bigr]}\nonumber\\
\hspace{-4mm}&\leq&\hspace{-2mm}\textstyle{\sum_{z,y}\E\bigl[(q_n(z)-\E q_n(z))(\tfrac{\wt g_{\infty}(y)}{c_6})^\b P_z(\min_{y'\sim y}\sigma(y')\leq m)\bigr] + f_{m}^\b}.\quad\quad\label{4.28aaa}
\eea
The first summand in the right hand side of \eqref{4.28aaa} is smaller than $\e f_m^\b$ if
\be\label{4.30aa}
\textstyle{\e_m^\b\equiv\sum_{z}\sum_y \E\bigl[|q_n(z)-\E q_n(z)|P_z(\sigma(y)\leq m)\bigr]\leq \e m}.
\ee
We divide the sum into $z\in B_{n^{1/2}/\e'}$ and $z\notin B_{n^{1/2}/\e'}$. Let $z\in B_{n^{1/2}/\e'}$. From the proof of Lemma \ref{lemma3.3} we 
know that there exists $c''\in(0,\infty)$ such that
\bea\label{4.326}
\textstyle{\sum_{y\in \Z^d}P_z(\sigma(y)\leq m))}\hspace{-2mm}&\leq&\hspace{-2mm} \textstyle{c''\sum_{|z-y|\leq m^{1/2}\log m} |y|^{2-d} e^{-c_4/2|y|^2/m} (g_{\infty}(y))^{-1}}\nonumber\\
\hspace{-2mm}&+&\hspace{-2mm}\textstyle{m^{(d+1)/2}\1_{A_m^c(z)}+\sum_{|z-y|>m^{1/2}\log m}\1_{A_{|z-y|^2}^c(z)}},\quad\quad
\eea
where $A_m(z)$ is as in \eqref{3.7}. 
Let us first control the contribution to $\e_m^\b$ coming from the first sum in \eqref{4.326}. We bound $1/g_{\infty}(y)\leq 1/\e'+(g_{\infty}(y))^{-1}\1_{g_{\infty}(y)<\e'}$ and call $(I)$, respectively $(II)$ the contribution  to $\e_m^\b$ coming from $1/\e'$, respectively $(g_{\infty}(y))^{-1}\1_{g_{\infty}(y)<\e'}$. Now, 
\be\label{4.32aa}
\textstyle{(I) = (c''m)/\e'\sum_{|z|\leq n^{1/2}/\e'}\E\bigl[|q_n(z)-\E q_n(z)|/\E q_n(z)\bigr] \E q_n(z)}.
\ee
Since $q_n(z)\leq c_1 n^{-d/2}$, the contribution to $(I)$ from $z\in B_{\e'n^{1/2}}$ is smaller than $(\e')^{d-1} m$. 
By \eqref{theorem315} of Theorem \ref{theorem3.1} for $z\in B_{n^{1/2}/\e'}\setminus B_{\e'n^{1/2}}$, $\E\bigl[|q_n(z)-\E q_n(z)|/\E q_n(z)\bigr]$ tends uniformly to zero, and so $(I)$ is bounded above by $\e m$.
 Also,
\be\label{4.31aa}
\textstyle{
(II)\leq c_1 m(\e')^{-d}c\E\bigl[(g_{\infty}(0))^{-1}\1_{g_{\infty}(0)<1/\e'}\bigr]\leq c_1 m(\e')^{-d}\exp(-c/\e'^{1/3})<\e m },
\ee
where we used $g_{\infty}(0)\geq U_0^{1/(2-d)}$ and \eqref{theorem312}. We now bound the contribution to $\e_m^\b$ coming from the second line in \eqref{4.326}. By  \eqref{theorem311} of Theorem \ref{theorem3.1}, $|q_n(z)-\E q_n(z)|\leq n^{-d/2}$ for all $z\in  B_{n^{1/2}/\e'}$. By the identical distribution of the $U$'s and since $| B_{n^{1/2}/\e'}|n^{-d/2}\leq \e'^{-d/2}$, it suffices to prove that $m^{(d+1)/2}\P(A_m^c) +\sum_{|y|>m^{1/2}\log m}\P(A_{|y|^2}^c)$ vanishes as $m\rightarrow \infty$. This follows by definition of $A_n$ and \eqref{theorem312}. Finally, let $z\notin B_{n^{1/2}/\e'}$. 
By Cauchy-Schwarz inequality, the contribution to $\e_m^\b$ coming from ${z\notin B_{n^{1/2}/\e'}}$  is bounded above by
\be\label{4.30aaa}
\textstyle{\sum_{|z|>n^{1/2}/\e'}(\E|q_n(z)|^2)^{1/2}\sum_{y}( \E(P(\sigma(y)\leq m))^2)^{1/2}}.
\ee
By \eqref{theorem312} and \eqref{theorem313} of Theorem \eqref{theorem3.1}, $\E|q_n(z)|^2\leq n^{-d}e^{-2c_4 |z|^2/n^2}$. One can check that the sum over $z\notin B_{n^{1/2}/\e'}$ of $n^{-d/2}e^{-c_4 |z|^2/n^2}$ is bounded above by $e^{-c_4/\e'}$. It remains to establish that the sum over $y$ in \eqref{4.30aaa} is of smaller order than $m$. This follows from \eqref{4.326} and \eqref{theorem312}. This proves \eqref{4.28aa}. 
Thus \eqref{4.28} holds, and so,
\be\label{4.38}
u^{-\a} t K \Gamma(1+\a)(1+\e)^\a + o(1)\geq \E\wt\nu_n^t(u,\infty)\geq u^{-\a} t K \Gamma(1+\a, \epsilon_n) (1-\e)^\a- o(1).
\ee
Since $\e>0$ is arbitrary we see that $\lim_{n\rightarrow\infty}\E \wt \nu_n^t(u,\infty) =  u^{-\a} t {\cal K}$, where ${\cal K}\equiv K \Gamma(1+\a)$. This concludes the proof of Lemma \ref{lemma4.1} for $d\geq 3$ and $\theta>0$. 
When $\theta=0$, the proof simplifies because $J$ is independent of the random environment. More precisely, it suffices to use Lemma 3.5 in \cite{BaCe11} to replace $g_{B_n^i}(y)$ by $g_{\infty}(y)$ to get 
\bea\label{4.39}
\tfrac{t\Gamma(1+\a, \epsilon_n)}{u^{\a}\theta_n}\hspace{-3mm}&&\hspace{-3mm}\textstyle{\sum_{y\in \Z^d}g_{\infty}(y)^\a P\left(\sigma(y)\leq h(\theta_n)\right) - o(1) }\nonumber\\
\hspace{-3mm}&&\hspace{-3mm}\leq \E\wt \nu_n^t(u,\infty)\leq\textstyle{\frac{t\Gamma(1+\a)(1+\e')^{\a}}{u^{\a}\theta_n} \sum_{y\in \Z^d} g_{\infty}^\a(y)P(\sigma(y)\leq \theta_n) }+o(1).
\eea
By the same arguments as for $\theta>0$, we can show that both bounds converge to $u^{-\a} t {\cal K}$ as $n\rightarrow\infty$. This finishes the proof of Lemma \ref{lemma4.1} for $d\geq 3$.

\noindent \textbf{\emph{Case 2.}} Let $d=2$. The pattern of proof is similar to that of Case 1 and relies on \eqref{4.4} and \eqref{4.10}. The difference lies in the behavior $g_{B_n^i}(y)$. By definition in \eqref{4.7}, on $A_n$,
\be\label{4.39a}
\textstyle{g_{B_n^i}(y)=\int_0^\infty P_y(J(t)=y, \eta(B_n^1)>t)dt \geq \int_{\sqrt{\theta_n}}^{\theta_n}P_y(J(t)=y)dt -e^{-c_4(\log\theta_n)^2},}
\ee
where we used \eqref{lemma321} of Lemma \ref{lemma3.2}. By \eqref{theorem314} of Theorem 3.1, the integral in the right hand side of \eqref{4.39a} is  larger than $c_1/2 \log \theta_n$, showing that $g_{B_n^i}(y)$ diverges as $n\rightarrow\infty$. Thus, instead of substituting $\wt g_{\infty}(y)$ for $g_{B_n^i}(y)$ we use \eqref{4.20} to approximate $g_{B_n^i}(y)$ by $\wt g_{B_n^i}(y)$ for $n$ large enough. A number of results from \cite{Ce11} will allow us to deal with $\wt g_{B_n^i}(y)$.
 
Let us begin with the construction of a lower bound on $\E\wt \nu_n^t(u,\infty)$. We deduce from \eqref{4.17}, that bounding $P(\sigma(y)\leq h(\theta_n))\geq P(\min_{y\sim y'}\sigma(y')\leq h(\theta_n))$ for $y\in T_n$, produces in $\E\wt \nu_n^t(u,\infty)$ an error of the order $c_n^{-\e}$ for $\e>0$. We use \eqref{4.20} to substitute $\wt g_{B_n^1}(y)$ for $ g_{B_n^1}(y)$. Since $P(\min_{y'\sim y}\sigma(y')\leq h(\theta_n))$ and $\wt g_{B_n^1}(y)$ are independent of $\gamma_n(y)$, we can proceed as in Case 1 and take expectation with respect to $\gamma_n(y)$. Doing this yields
\bea\label{4.47}
\E\wt \nu_n^t(u,\infty)\hspace{-2mm}&\geq&\hspace{-2mm}\frac{t\log \theta_n \Gamma(1+\a,\epsilon_n)}{(u\log\theta_n)^{\a}\theta_n}\sum_{|y|\leq \theta_n}\E\bigl[\wt g_{B_n^1}^\a(y)P\bigl(\min_{y'\sim y}\sigma(y')\leq h(\theta_n)\bigr)|A_n\bigr]-o(1)\nonumber\\
\hspace{-2mm}&\geq&\hspace{-2mm}\frac{t\log \theta_n \Gamma(1+\a,\epsilon_n)}{(u\log\theta_n)^{\a}\theta_n}\sum_{|y|\leq \theta_n}\E\bigl[\wt g_{B_n^1}^\a(y)P(\sigma(y)\leq h(\theta_n))|A_n\bigr]-o(1),
\eea
since $\min_{y'\sim y}\sigma(y)\leq \sigma(y)$. We now construct an upper bound on $\E\wt \nu_n^t(u,\infty)$. Again, by \eqref{4.20} and since $\min_{y'\sim y}\sigma(y)\leq \sigma(y)$, $\E\wt \nu_n^t(u,\infty)$ is bounded above by
\bea\label{4.40}
\textstyle{\tfrac{t\log \theta_n \Gamma(1+\a)(1+\e)}{(u\log\theta_n)^{\a}\theta_n}\sum_{|y|\leq \theta_n}\E\bigl[g_{B_n^2}^\a(y)}P(\min_{y'\sim y}\sigma(y')\leq \theta_n)|A_n \bigr]+o(1). \quad
\eea
We show now that, up to a negligible error term, we may substitute $P(\sigma(y)\leq \theta_n)$ for $P(\min_{y'\sim y}\sigma(y')\leq \theta_n)$ for all $y\in B_{\theta_n}$. To see this note for $y\in B_{\theta_n}$
\be\label{4.41}
\textstyle{P(\min_{y'\sim y}\sigma(y')\leq \theta_n)\leq P(\sigma(y)\leq \theta_n) +\sum_{y'\sim y}P(\sigma(y')\leq \theta_n<\sigma(y))}.
\ee
By the Markov property, for all $y'\sim y$, $P(\sigma(y')\leq \theta_n<\sigma(y))$ is bounded above by
\bea
&&\textstyle{P(\sigma(y')\leq h(\theta_n))P_{y'}(\sigma(y)>k(\theta_n))+P(\sigma(y')\in (h(\theta_n),\theta_n))}\nonumber\\
&=& \d_n^1(y')+\d_n^2(y').\label{4.42}
\eea
By \eqref{4.2i} it thus suffices to establish that
\be\label{4.43}
\textstyle{\theta_n^{-1}(\log\theta_n)^{1-\a}\sum_{|y|\leq \theta_n}\sum_{y'\sim y}\E\bigl[ \wt g_{B_n^1}^\a(y)\bigl(\d_n^1(y')+\d_n^2(y')\bigr)| A_n\bigr]=o(1)}.
\ee
As in Lemma 3.3 in \cite{Ce11} one can show on $A_n$ that there exists $c_{9}\in(0,\infty)$ such that, for all $y\in B_{d_n(t)}$, 
$\wt g_{B_n^i}(y)\leq c_{9} \log \theta_n$ for $i=1,2$. By \eqref{lemma332} of Lemma \ref{lemma3.3}, 
$\sum_{y,y'\sim y} \d_n^2(y')\leq c_5\theta_n/(\log\theta_n)^2$. Hence the contribution  to the left hand side of \eqref{4.43} coming from $\d_n^2$ is of order $o(1)$. 
To see that the same is true for $\d_n^1$, we use \eqref{lemma332} of Lemma 
 \ref{lemma3.3} to bound 
 \be\label{4.44}
\E[P(\sigma(y')\leq h(\theta_n)) P_{y'}(\sigma(y)>k(\theta_n))|A_n]\leq f_{h(\theta_n)}(|y'|)\E[P(\sigma(x)>k(\theta_n))|A_n],\quad
\ee
where $|x|=1$. By recurrence and irreducibility, $\E[ P(\sigma(x)>k(\theta_n))|A_n]\leq \e$ for $n$ large enough 
 and \eqref{4.44} implies that $\sum_{y} \d_n^1(y)\leq c_5\e$. This concludes the proof of \eqref{4.43}. Finally, combining \eqref{4.40}-\eqref{4.44},
\be\label{4.46}
\E\wt \nu_n^t(u,\infty)\leq \textstyle{\frac{t\log \theta_n \Gamma(1+\a)(1+\e)}{(u\log\theta_n)^{\a}\theta_n}\sum_{|y|\leq \theta_n}\E\bigl[g_{B_n^2}^\a(y)P(\sigma(y)\leq \theta_n) |A_n\bigr]+o(1).}
\ee
We now show that \eqref{4.47} and \eqref{4.46} tend to the same limit ${\cal K}t u^{-\a}$. By Proposition 3.1 in \cite{Ce11} we know that there exists $\bar K$ such that, as $r\rightarrow\infty$, $(\bar K\log r)^{-1}\wt g_{B_r^{1/2}(0)}(0)$ converges $\P$-a.s.~to one for $i=1,2$. Thus, $\P$-a.s.,
\be\label{4.49a}
\textstyle{\lim_{n\rightarrow\infty}(\bar K\log\theta_n)^{-1}\wt g_{B_n^1}(0)
=\lim_{n\rightarrow\infty}(\bar K\log\theta_n)^{-1}\wt g_{B_n^2}(0)=1}.
\ee
For $\varepsilon>0$ define ${\cal B}_n(y)\equiv\{|(\bar K\log \theta_n)^{-1}g_{B_n^2}(y)-1|\leq \varepsilon\}$. Then,
\be\label{4.49}
\eqref{4.46}\leq\frac{t\log \theta_n(1+\e)^{\a}{\cal K'}}{u^{\a}\theta_n}\sum_{y\in B_{\theta_n}}\E\bigl[P(\sigma(y)\leq \theta_n)((1+\e)+ c_{9}^\a/{\cal K'} \1_{{\cal B}_n^c(y)})|A_n\bigr],
\ee
where ${\cal K'}\equiv\Gamma(1+\a) \bar K^\a$ and where we used that $\wt g_{B_n^1}(y)\leq c_9\log\theta_n$. As in Lemma 3.3 in \cite{Ce11}, on $A_n$, we have that $\wt g_{B_n^1}(y)\geq c_8\log\theta_n$, for all $y\in B_{\theta_n}$. Hence, we can bound \eqref{4.47} from below in a similar way. Thus, convergence of \eqref{4.47} and \eqref{4.46} follows if we can establish that
\bea\label{4.50}
&&\textstyle{\lim_{n\rightarrow\infty}\theta_n (\log\theta_n)^{-1}\E[ ER_{\theta_n}|A_n]=\bar K^{-1}},\\
&&\textstyle{\lim_{n\rightarrow\infty}\theta_n (\log\theta_n)^{-1}\sum_{y\in B_{\theta_n}}\E\bigl[P(\sigma(y)\leq \theta_n)\1_{{\cal B}_n^c(y)}|A_n\bigr]=0},\label{4.50a}
\eea
where $R_{\theta_n}$ is defined in \eqref{3.10}. Let us first prove \eqref{4.50a}. By \eqref{lemma332} of Lemma \ref{lemma3.3} and \eqref{4.2i}
\bea
\textstyle{\sum_{y}\E\bigl[P(\sigma(y)\leq \theta_n)\1_{{\cal B}_n^c(y)}|A_n\bigr]\leq \sum_{y}f_{\theta_n}(|y|)\P\bigl({\cal B}_n^c(y)\bigr)+\e\leq \frac{\theta_n}{\log\theta_n} \P\bigl({\cal B}_n^c\bigr)},
\eea
where we used that the $\tau$'s are i.i.d. By Proposition 3.1 in \cite{Ce11},  $\P({\cal B}_n^c)\rightarrow 0$, and so \eqref{4.50a} holds.
Let us now construct upper and lower bounds for $\theta_n (\log\theta_n)^{-1}\E[ ER_{\theta_n}|A_n]$ that coincide in the limit. We begin with the lower bound. By the Markov property,
\bea\label{4.51}
\textstyle{\theta_n=\sum_{y\in B_{\theta_n}} E \ell_{\theta_n}(y)\leq \sum_{y\in B_{\theta_n}}  P(\sigma(y)\leq\theta_n) E_y \ell_{\theta_n}(y).}
\eea
To bound $\E[ ER_{\theta_n}|A_n]$ from below it suffices to construct an upper bound on $E_y\ell_{\theta_n}(y)$. By Theorem \ref{theorem311} one can show on $A_n$ that for all $y\in B_{d_n(t)}$, $E_y\ell_{\theta_n}(y)\in (c_8\log\theta_n, c_9\log \theta_n)$, yielding
\bea
\theta_n\leq  \log\theta_n\textstyle{\sum_{|y|\leq\theta_n}\E\bigl[  P(\sigma(y)\leq\theta_n>0)\bigl(\bar K (1+\varepsilon)+ c_8 \1_{{\cal B}_n^c(y)}\bigr)|A_n\bigr]+o(1).}\label{4.52}
\eea
Together with \eqref{4.50a},
\be\label{4.54}
1\leq \theta_n^{-1}\log \theta_n\bar K(1+\varepsilon) \E[ E R_{\theta_n}|A_n] +c_8\P({\cal B}_n^c),
\ee 
i.e.~$\lim_{n\rightarrow\infty}\theta_n^{-1}\log \theta_n\E[ E R_{\theta_n}|A_n]$ is bounded below by ${\bar K}^{-1}$.
For the upper bound we again use the Markov property and get that
 \bea \label{4.55}
\textstyle{\theta_n+k(\theta_n)=\sum_{y\in B_{\theta_n}} E \ell_{\theta_n+k(\theta_n)}(y)\geq\sum_{y\in B_{\theta_n}}P(\ell_{\theta_n}(y)>0) E_y \ell_{k(\theta_n)}(y).}
\eea
Since $k(\theta_n)\log\theta_n/\theta_n\rightarrow 0$, we can show that the upper bound coincides with the lower bound. The claim of \eqref{4.50} is proved. Finally, using \eqref{4.50} and \eqref{4.50a} in \eqref{4.47} and \eqref{4.46}, 
\be\label{4.57}
\textstyle{{\cal K}u^{-\a} t(1-\e)^{1+\a}\leq\lim_{n\rightarrow\infty}\E\wt\nu_n^t(u,\infty)\leq {\cal K}u^{-\a} t(1+\e)^{1+\a},}
\ee
where ${\cal K}={\cal K'}\bar K^{-1}$. Since $\e>0$ is arbitrary this proves the convergence of $\E\wt \nu_n^t(u,\infty)$. This finishes the proof of Lemma \ref{lemma4.1} for $d=2$.
\end{proof}
\subsection{Convergence of $\wt \nu_n^t(u,\infty)$}\label{S4.2}
Let $u>0$, $t>0$.  In this section we prove that $\P$-a.s., $\lim_{n\rightarrow\infty} \wt \nu_n^t(u,\infty)=t\nu(u,\infty)$. Once more, let us consider subsequences $c_{N,r}=\exp(N+r)$, $r\in [0,1)$ (see the paragraph below \eqref{3.19a}). For $r,s\in[0,1)$, we define
\be\label{qnrs}
\textstyle{Q_{N,r,s}(x)\equiv \sum_y Q_{N,r,s}(x,y)},
\ee
where we write, for  $x,y\in \Z^d$, 
\be\label{4.59}
\textstyle{ Q_{N,r,s}(x,y)\equiv\PP_x(\ell_{\theta_{N,s}}(y)\gamma_{N,r}(y)>u, \eta(B_{\theta_{N,r}})\leq \theta_{N,s})\1_{y\in T_{N,r}}, \quad}
\ee
where $T_{N,r}$ is defined above \eqref{3.17}. For $i=0,\ldots, N$, we set $r_i\equiv i/ N$ and define
\bea
\nu_{N,i}^{1}\hspace{-2mm}&\equiv&\hspace{-2mm}\textstyle{ \sum_{x\in B_{d_{N,r_{i}}}}  Q_{N,r_{i+1},r_i,} (x)\inf_{r\in {\cal I}_i}\overline \pi_{N,r_i}^t(x)},\\
\nu_{N,i}^2\hspace{-2mm}&\equiv&\hspace{-2mm}\textstyle{\sum_{x\in B_{d_{N,r_{i+1}}}}  Q_{N,r_i,r_{i+1}} (x)\sup_{r\in {\cal I}_i}\overline \pi_{N,r_i}^t(x).}
\eea
Then we have that, for $r\in {\cal I}_i\equiv [r_i,r_{i+1})$,
\bea\label{431}
\textstyle{\nu_{N,i}^1\leq\wt\nu_{N,r}^t(u,\infty)\leq\nu_{N,i}^2.}
\eea
To prove that $\lim_{n\rightarrow\infty} \wt \nu_n^t(u,\infty)=t\nu(u,\infty)$~$\P$-a.s., we use \eqref{431} 
 as follows. First, we will derive  from Lemma \ref{lemma4.1} that,  for $j=1,2$, $\E\nu_{N,i}^j$ converges to $t\nu(u,\infty)$. Then, we will prove in Lemma \ref{lemma4.2} that, for $d\geq 3 $, respectively, $d=2$, the second and fourth moments of $\nu_{N,i}^j-\E \nu_{N,i}^j$ are bounded above by $N^{-3}$. Together with \eqref{431}, we thus get by Chebyshev's inequality that
\bea
\hspace{-2mm}&&\hspace{-2mm}\textstyle{\P(\sup_{r\in[0,1)} |\wt \nu_{N,r}^t(u,\infty)-t\nu(u,\infty)|>\e)}\nonumber\\
\hspace{-2mm}&\leq&\hspace{-2mm} \textstyle{\sum_{i=0}^{ N-1} \bigl\{\P(|\nu_{N,i}^1 -\E\nu_{N,i}^1|>\e/2)+\P(|\nu_{N,i}^2 -\E\nu_{N,i}^2|>\e/2)\bigr\}\leq 32 \e^{-4} N^{-2},}\quad
\eea
which is summable in $N$ and therefore proves that $\P$-a.s., $\lim_{n\rightarrow\infty} \wt \nu_n^t(u,\infty)=t\nu(u,\infty)$.

Fix $j\in\{1,2\}$. Let us derive from Lemma  \ref{lemma4.1} that $\lim_{N\rightarrow\infty}\E\nu_{N,i}^j=t\nu(u,\infty)$. By definition of $r_i$ this lemma implies that $k_{N,r_{i}}(t)\E( Q_{N,r_{i+1},r_i,} (0))$ and $ k_{N,r_{i+1}}(t)\E( Q_{N,r_i,r_{i+1}} (0))$ both converge to $t\nu(u,\infty)$. Convergence of $\E \nu_{N,i}^j$  to the same quantity will follow if we can establish that the following vanishes as $N\rightarrow\infty$
\be\label{ulb}
\textstyle{\sup_{k=1, \ldots, k_{N,r_{i}}(t)}\sum_{x\in B_{d_{N,r_{i+1}}}}\sup_{r,s\in{\cal I}_i}\E |P(J(k\theta_{N,r})=x)-P(J(k\theta_{N,s})=x)|}.
\ee
Fix $k\in1, \ldots, k_{N,r_{i}}(t)$. We divide $B_{d_{N,r_{i+1}}}$ into $B_{(\e k\theta_{N,1})^{1/2}}$,  $B_{d_{N,r_{i+1}}}\setminus B_{(k\theta_{N,0}/\e)^{1/2}}$, and $B_{(k\theta_{N,0}/\e)^{1/2}}\setminus B_{(\e k\theta_{N,1})^{1/2}}$ and construct bounds that are uniform in $k$. By \eqref{theorem311} of Theorem \ref{theorem3.1}, the contribution to \eqref{ulb}  coming from $B_{(\e k\theta_{N,1})^{1/2}}$ is bounded above by $\e^d$. By \eqref{theorem312} and \eqref{theorem313} of Theorem \ref{theorem3.1}, the contribution  to \eqref{ulb} coming from $B_{d_{N,r_{i+1}}}\setminus B_{(k\theta_{N,0}/\e)^{1/2}}$ is smaller than $e^{-c_4/\e}$. By definition of $r_i$, dominated convergence, and the uniform local central limit theorem (\eqref{theorem315} of Theorem \ref{theorem3.1}), the contribution to \eqref{ulb} coming from $B_{(k\theta_{N,0}/\e)^{1/2}}\setminus B_{(\e k\theta_{N,1})^{1/2}}$ tends to zero. This proves that \eqref{ulb} vanishes as $N\rightarrow\infty$.

The following lemma bounds the second and fourth moment of  $\nu_{N,i}^j-\E \nu_{N,i}^j$ for $d\geq 3$, respectively, $d=2$.
\begin{lemma}\label{lemma4.2}
Let $i\in \{0,\ldots,N-1\}$, $j\in \{1,2\}$. For $d\geq 3$, for all $u>0$, $t>0$, for $N$ large enough,  
  \be\label{lemma421}
\E(\nu_{N,i}^j - \E\nu_{N,i}^j)^2\leq K(u,t)(\log\theta_{N,1})^2\theta_{N,1}^{-1/2},
 \ee
and for $d=2$,  for all $u>0$, $t>0$, for $N$ large enough, 
 \be\label{lemma4211}
\E(\nu_{N,i}^j - \E\nu_{N,i}^j)^4\leq K(u,t) u^{-\a}(\log \log a_{N,1})^6 (\log \theta_{N,0})^{-4},
 \ee
where $K(u,t)\in(0,\infty)$.
\end{lemma}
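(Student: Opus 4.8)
The plan is to establish \eqref{lemma421} (for $d\ge3$) by a second-moment estimate and \eqref{lemma4211} (for $d=2$) by a fourth-moment estimate, the decisive structural ingredient being that the random variables $Q_N(\cdot)$ are, up to super-polynomially small corrections, \emph{local} functionals of the random environment. Write $\nu_{N,i}^{j}=\sum_{x}W_N(x)\,Q_N(x)$, where $W_N(x)$ is the (non-random) weight $\inf_{r\in{\cal I}_i}\overline\pi_{N,r_i}^{t}(x)$ if $j=1$ and $\sup_{r\in{\cal I}_i}\overline\pi_{N,r_i}^{t}(x)$ if $j=2$, the outer sum runs over $B_{d_{N,r_{i}}}$ resp.\ $B_{d_{N,r_{i+1}}}$, and $Q_N(x)=\sum_{y}Q_{N,r,s}(x,y)$ with $(r,s)=(r_{i+1},r_i)$ resp.\ $(r_i,r_{i+1})$. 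First I would argue, exactly as in the proof of Lemma~\ref{lemma3.5}, that on the event $A_{N,1}$ --- whose complement has super-polynomially small probability, cf.~\eqref{3.7}--\eqref{3.7a} --- and modulo a comparably small error obtained by confining the excursions of $J$ via Lemma~\ref{lemma3.2}, the variable $Q_N(x)$ depends only on $\{\tau(z):z\in B_{\rho_N}(x)\}$ with $\rho_N\equiv\theta_{N,1}$: indeed $\gamma_{N,r}(y)$ and $\1_{y\in T_{N,r}}$ are local at $y$, the summand vanishes unless $y$ is visited before time $\theta_{N,1}$, and on $A_{N,1}$ the process does not travel a distance $\gg\theta_{N,1}^{1/2}(\log a_{N,1})^{3}$ in that time. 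Hence $Q_N(x)$ and $Q_N(x')$ are independent as soon as $|x-x'|>2\rho_N$, so, up to super-polynomially small errors,
\be
\E\bigl(\nu_{N,i}^{j}-\E\nu_{N,i}^{j}\bigr)^{2}=\sum_{x}\sum_{x':\,|x-x'|\le2\rho_N}W_N(x)W_N(x')\,\Cov\bigl(Q_N(x),Q_N(x')\bigr),
\ee
and in the fourth centred moment only those $4$-tuples $(x_1,\dots,x_4)$ survive whose proximity graph (an edge whenever two sites lie within $2\rho_N$) has no isolated vertex; these split into the two-disjoint-close-pairs terms, contributing at most the square of the absolute-value analogue of the right-hand side above, and the terms for which all four sites lie in a common ball of radius $O(\rho_N)$.

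The core of the argument is then a bound on $\E Q_N(x)^{p}$, $p\in\{2,4\}$, which feeds into the covariance/joint-moment terms via Cauchy--Schwarz resp.\ H\"older. I would expand $Q_N(x)^{p}=\sum_{y_1,\dots,y_p}\prod_{a}Q_N(x,y_a)$ and bound each factor by $\1_{y_a\in T_{N}}\,\PP_{x}(\sigma(y_a)\le\theta_{N,1})$; on $A_{N,1}$, the uniform heat-kernel bounds \eqref{theorem311}--\eqref{theorem313} of Theorem~\ref{theorem3.1} together with the (uniform, on $A_{N,1}$) lower bound on the mean local time --- bounded below by a constant for $d\ge3$ and of order $\log\theta_n$ for $d=2$ --- turn $\PP_{x}(\sigma(y)\le\theta_{N,1})$ into an \emph{environment-free} quantity, namely $\lesssim(|x-y|\vee1)^{2-d}$ for $d\ge3$ and $O(1)$ supported on $|x-y|\lesssim\theta_{N,1}^{1/2}(\log a_{N,1})^{3}$ for $d=2$. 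The $\tau$-indicators at mutually well-separated $y_a$'s then factorize, contributing $\prod_a\P(y_a\in T_N)\lesssim(c_n^{-\a}\epsilon_n^{-\a})^{p}$, while tuples with coinciding or neighbouring $y_a$'s are controlled using $\sum_{y}\E Q_N(x,y)\le C(u,t)k_n(t)^{-1}$ (Lemma~\ref{lemma4.1}). Summing the spatial factor over $B_{\rho_N}(x)^{p}$ and collecting the $c_n,\epsilon_n,\theta_n$ powers with the help of \eqref{1.44}--\eqref{1.45} gives the desired bound on $\E Q_N(x)^{p}$; combined with the near-diagonal counting factor $\#\{x':|x-x'|\le2\rho_N\}\lesssim\rho_N^{d}$ and the total-mass bounds $\sum_x W_N(x)\lesssim1$ ($d\ge3$), $\sum_xW_N(x)\lesssim(\log\log a_{N,1})^{3}$ ($d=2$) from Lemma~\ref{lemma3.7}, this yields \eqref{lemma421} and \eqref{lemma4211} (in fact comfortably, only summability after $\sum_{i=0}^{N-1}$ being required). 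For $d=2$ one must in addition replace $g_{B_N^{i}}(y)$ by its $\P$-a.s.\ equivalent $\bar K\log\theta_n$, using Proposition~3.1 of \cite{Ce11} as in the proof of Lemma~\ref{lemma4.1}, and control the complementary event; it is the recurrence of $J$, weakening the spatial decay, that makes the fourth moment the natural object here, and the $(\log\theta_{N,0})^{-4}$ in \eqref{lemma4211} comes out of the $\log\theta_n$-bookkeeping peculiar to the planar case.

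The step I expect to be the main obstacle is the bound on $\E Q_N(x)^{p}$ in the presence of the couplings between the quenched law $\PP_{x}$, the mean local times $g_{B_N^{i}}(y)$, and the trap indicators $\1_{y\in T_{N,r}}$: one has to decouple these --- which is precisely where the \emph{uniform} (environment-independent) heat-kernel bounds of Theorem~\ref{theorem3.1} and the tail estimate \eqref{theorem312} are indispensable, as is, for $d=2$, the a.s.\ logarithmic asymptotics of \cite{Ce11} --- while at the same time checking that the correlation volume $\rho_N^{d}$, polynomial in $\theta_n$, is dominated by the smallness produced by the trap densities and the range/heat-kernel estimates; this balance is exactly what dictates the constraints $\gamma_2<1/6$ and $\gamma_3>12/(1-\a)$ in Theorem~\ref{theorem1.5}. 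A secondary but necessary technical point, as in Lemma~\ref{lemma3.5}, is to make the ``$Q_N(x)$ is a local functional'' reduction rigorous by systematically reinstating the exit-time events and the good event $A_{N,1}$ throughout.
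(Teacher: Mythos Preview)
Your overall architecture---locality of $Q_N(x)$ in the environment, Cauchy--Schwarz/H\"older for joint moments, and Lemma~\ref{lemma4.3} for $\E Q_N(0)^p$---matches the paper's. The gap is in the choice of the correlation radius. You set $\rho_N\equiv\theta_{N,1}$ and then use the counting factor $\#\{x':|x-x'|\le2\rho_N\}\lesssim\rho_N^{d}$. With this $\rho_N$ the bound fails: for $d\ge3$, combining $|\Cov(Q_N(x),Q_N(x'))|\le\rho_N(d,2)=K u^{-\a}\theta_{N,1}^{1/2}c_{N,0}^{-\a}$ with \eqref{4.86} for $r=2\theta_{N,1}$ and $k_{N,1}(t)\sum_xW_N(x)\le k_{N,1}(t)$ yields a variance bound of order $k_{N,1}(t)\,\theta_{N,1}\,\rho_N(d,2)\asymp t\,\theta_{N,1}^{1/2}$, which \emph{diverges}, not the required $\theta_{N,1}^{-1/2}(\log\theta_{N,1})^{2}$. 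For $d=2$ the discrepancy is worse since $\theta_N$ is polynomial in $n$.

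The missing idea is that the \emph{effective} correlation scale is the diffusive one, $\sqrt{\theta_{N}}\,\log\theta_{N}$, not $\theta_{N}$. You even note that the walk does not travel farther than $\theta_{N,1}^{1/2}(\log a_{N,1})^{3}$ in time $\theta_{N,1}$, but you do not exploit it. The paper splits the near-diagonal region $\{|x-x'|\le2\theta_{N,1}\}$ into $D_2(x)=B_{\sqrt{\theta_{N,1}}\log\theta_{N,1}}(x)$, where Cauchy--Schwarz suffices (giving \eqref{4.88}), and $D_1(x)=B_{2\theta_{N,1}}(x)\setminus D_2(x)$, where it bounds $\E[Q_N(x)Q_N(x')]$ directly via an \emph{intersection-range} argument \eqref{4.81}--\eqref{4.85}: on $A_{N,1}$, two independent copies of $J$ started at $x,x'$ with $|x-x'|>\sqrt{\theta_{N,1}}\log\theta_{N,1}$ meet before time $\theta_{N,1}$ only with probability $e^{-c(\log\theta_{N})^{2}}$, which kills the contribution from $D_1(x)$ outright. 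The same dichotomy (with analogous scales $\theta_{N,1}^{1/2}\log\log\theta_{N,1}$ and $\theta_{N,1}^{1/2}\log\theta_{N,1}$) drives the $d=2$ fourth-moment computation. Either reduce your $\rho_N$ to the diffusive scale and justify approximate independence at that scale via Lemma~\ref{lemma3.2}, or insert the intersection-range estimate for intermediate distances; without one of these, the bound does not close.
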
 
The proof of Lemma \ref{lemma4.2} relies on Lemma \ref{lemma4.3} below. We first prove Lemma \ref{lemma4.3} and Lemma \ref{lemma4.2} next.
\begin{lemma}\label{lemma4.3}
For all $u>0$, for $k\in\{2,4\}$, for $r,s\in[0,1)$, for $N$ large enough, 
\be\label{lemma431}
\E(Q_{N,r,s} (0))^k\leq\textstyle{ \sum_{x}\E (Q_{N,0,1} (0,x))^k +c_{N,0}^{-3\alpha/2} \leq \rho_N (d,k)\equiv K u^{-\a} \rho_N(d,k)},
\ee
where $K\in(0,\infty)$ and where we set
\be\label{lemma432}
\rho_N(d,k)\equiv \theta_{N,1} c_{N,0}^{-\alpha} (\log \theta_{N,0})^{-k+\alpha}\1_{d=2} +  
\theta_{N,1}^{1/2} c_{N,0}^{-\alpha}\1_{d\geq 3}.
\ee
\end{lemma}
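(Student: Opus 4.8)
The plan is to estimate the $k$-th moment ($k\in\{2,4\}$) of $Q_{N,r,s}(0)=\sum_y Q_{N,r,s}(0,y)$ by expanding the $k$-th power into a sum over $k$-tuples $(y_1,\dots,y_k)$ of sites, all of which must lie in $B_{\theta_{N,r}}$ (by the restriction $\eta(B_{\theta_{N,r}})\le\theta_{N,s}$, or rather by the presence of the indicator $\1_{y\in T_{N,r}}$ together with the fact that, starting from $0$, the walk cannot reach $y$ within time $\theta_{N,s}\approx\theta_{N,r}$ unless $|y|\lesssim\theta_{N,r}$ modulo the event $A_{N}$). First I would reduce the problem, exactly as in the proof of Lemma \ref{lemma4.1} (see \eqref{4.1}--\eqref{4.2i}), to working on the good event $A_N$ and to replacing $Q_{N,r,s}(0)$ by its analogue with the fixed scales $r=0$, $s=1$; the error made in this replacement is bounded by $c_{N,0}^{-3\alpha/2}$ as in \eqref{4.1}, using that the $\tau$'s are identically distributed and that $\E(\1_{0\in T_N})=c_N^{-\alpha}\epsilon_N^{-\alpha}$ is negligible. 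This accounts for the first inequality in \eqref{lemma431}.

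Next, for the main term $\sum_{y_1,\dots,y_k}\E\prod_{i=1}^k Q_{N,0,1}(0,y_i)$ I would bound each factor $Q_{N,0,1}(0,y_i)$ above using the crude estimate obtained in the proof of Lemma \ref{lemma4.1}: namely (cf. \eqref{4.12} and \eqref{4.22}) $Q_{N,0,1}(0,y)\le P(\sigma(y)\le\theta_{N,1})\,e^{-u(\gamma_N(y)g_{B_n^2}(y))^{-1}}\1_{y\in T_N}+$ (negligible error). Since the $\tau$'s are i.i.d., the expectation over the environment factorizes over the \emph{distinct} values among $y_1,\dots,y_k$, the dominant contribution coming from all $y_i$ equal to a common $y$ (diagonal term), the off-diagonal terms being smaller by the standard argument that the hitting-time factors $P_0(\sigma(y_i)\le\theta_{N,1})$, $i$ ranging over distinct sites, multiply to something summable. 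For the diagonal term one is left with $\sum_y \E\big[(P(\sigma(y)\le\theta_{N,1}))^k\, (g_\infty(y))^{k}\, \gamma_N(y)^{k}\wedge 1\big]$-type quantities; taking expectation over $\gamma_N(y)$ (which has an $\alpha$-stable tail) produces, for each fixed realization of the walk, a factor $\sim u^{-\alpha}\,\mathbb E[(g_\cdot(y))^{\alpha}\,(P(\sigma(y)\le\theta_{N,1}))^{k}]$, the exponent $\alpha$ (rather than $k$) coming precisely from the heavy tail and the $k$-th power of the exponential $e^{-u/(\gamma_N(y)g)}$. Summing over $y\in B_{\theta_{N,1}}$ and invoking Lemma \ref{lemma3.3} for the sum of $k$-th powers of hitting probabilities — which gives $\sum_y(P(\sigma(y)\le\theta_{N,1}))^k\lesssim \theta_{N,1}(\log\theta_{N,1})^{-k}$ for $d=2$ (via \eqref{lemma332}) and $\lesssim \theta_{N,1}^{1/k}$, hence after raising to the power $k$ inside the sum, $\lesssim\theta_{N,1}^{1/2}$ when $k\le 2$... — more precisely, using $\E E_0 R_{\theta_{N,1}}^k$ bounds — yields exactly $\rho_N(d,k)$ up to the constant $Ku^{-\alpha}$.

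The main obstacle I anticipate is the bookkeeping of the off-diagonal terms in the $k$-fold expansion together with the $n$-dependence of the local-time Green's functions $g_{B_n^i}(y)$: one must show that replacing $g_{B_n^i}(y)$ by the $n$-independent, $\gamma_N(y)$-independent quantities $g_\infty(y)$ (for $d\ge3$) or $\wt g_{B_n^i}(y)$ (for $d=2$) before integrating over the environment costs only a multiplicative $(1\pm\epsilon)$, which is exactly what \eqref{4.20} and \eqref{4.23} provide on $A_N$; and, for $d=2$, that the sum over $y$ of $(\wt g_{B_n^i}(y))^{k}(P(\sigma(y)\le\theta_{N,1}))^k$ is controlled by $(\log\theta_{N,0})^{k}$ times $\sum_y (f_{\theta_{N,1}}(|y|))^k$, for which \eqref{lemma332} with $k\in\{2,4\}$ was tailor-made. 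Assembling these pieces, choosing the constant $K$ large enough to absorb all the $(1+\epsilon)^{\alpha}$ factors, the Gamma-function constants, and the negligible error terms of order $c_{N,0}^{-3\alpha/2}$ and $e^{-c_4(\log\theta_N)^2}$, gives \eqref{lemma431}--\eqref{lemma432} and completes the proof.
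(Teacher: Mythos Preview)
Your overall strategy---split the $k$-th power into diagonal and off-diagonal contributions, bound the diagonal via the heavy-tail integral and Lemma~\ref{lemma3.3}---is the same as the paper's, but you have misread the structure of the first inequality in \eqref{lemma431}, and this leads you to propose the wrong mechanism for the $c_{N,0}^{-3\alpha/2}$ term.

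The replacement of scales $(r,s)\to(0,1)$ carries \emph{no} error: it is the monotone bound $Q_{N,r,s}(0)\le Q_{N,0,1}(0)$ (larger time window, larger $\gamma$, larger trap set), used as the first line of the paper's proof. The term $c_{N,0}^{-3\alpha/2}$ is exactly the off-diagonal contribution in the expansion of $\bigl(\sum_y Q_{N,0,1}(0,y)\bigr)^k$. The paper bounds it not through hitting probabilities but through the crude inequality $Q_{N,0,1}(0,y)\le \1_{y\in T_{N,0}}$: any off-diagonal tuple contains at least two distinct sites $y\neq y'$ in $T_{N,0}\cap B_{\theta_{N,1}}$, so its expectation is at most $\P(y,y'\in T_{N,0})\le (c_{N,0}\epsilon_{N,0})^{-2\alpha}$, and summing over the at most $\theta_{N,1}^{kd}$ tuples gives \eqref{4.63}. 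Your proposed route via ``hitting-time factors multiplying to something summable'' can be made to work but is longer and does not directly produce the clean $c_{N,0}^{-3\alpha/2}$. Your reference to $\E\1_{0\in T_N}=c_N^{-\alpha}\epsilon_N^{-\alpha}$ involves a single trap and cannot yield the exponent $3\alpha/2$; the point is precisely that two independent rare traps are needed.

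For the diagonal term you overcomplicate matters. There is no need for the $(1\pm\varepsilon)$ approximations \eqref{4.20}, \eqref{4.23} or the $\wt g$ decoupling: since only an upper bound is required, the paper simply uses the deterministic bound $g_{B_{N,1}^2}(y)\le c(\log\theta_{N,1}\1_{d=2}+\1_{d\ge3})$ valid on $A_{N,1}$, then replaces $P(\sigma(y)\le\theta_{N,1})$ by $P(\min_{y'\sim y}\sigma(y')\le\theta_{N,1})$, which is independent of $\gamma_{N,0}(y)$. This lets the $\E$ over $\gamma_{N,0}(y)$ factor out immediately as $Cu^{-\alpha}c_{N,0}^{-\alpha}(\log\theta_{N,1}\1_{d=2}+\1_{d\ge3})^{\alpha}$, and the remaining sum $\sum_y \E(P(\sigma(y)\le\theta_{N,1}))^k$ is handled by Lemma~\ref{lemma3.3} (for $d\ge3$ one uses that the $k=4$ sum is bounded by the $k=2$ sum, giving $\theta_{N,1}^{1/2}$ uniformly).
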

\begin{proof}[Proof of Lemma \ref{lemma4.3}]
Let $k\in \{2,4\}$ and $r,s\in [0,1)$. Since $Q_{N,r,s} (x)\leq Q_{N,0,1} (x)$,  we have by \eqref{4.59} and \eqref{qnrs} that
\be\label{4.62}
\textstyle{\E(Q_{N,r,s} (0))^k\leq \sum_{y}\E(Q_{N,0,1} (0,y))^k+\sum_{{y'\neq y}\atop{ |y'-y|\leq \theta_{N,1}}}\E(\1_{y',y\in T_{N,0}})}.\hspace{-5mm}\ \ 
\ee
The double sum in \eqref{4.62} is bounded above by
\bea
\textstyle{\sum_{{y'\neq y}\atop{  |y'-y|\leq \theta_{N,1}}} \E \1_{y,y'\in T_{N,0}}}
\leq\textstyle{\theta_{N,1}^{d}(c_{N,0}\epsilon_{N,1})^{-2\a}\leq c_{N,0}^{-3\alpha/2},}\label{4.63}\ 
\eea
where we used \eqref{1.44} and \eqref{1.45}. It remains to bound the first term in the right hand side of \eqref{4.62}. Since $\P(A_{N,1}^c) \theta_{N,1}^d\leq \rho_N(d,k)$, it suffices to bound $\E(Q_{N,0,1} (0,y)\1_{A_{N,1}})^k$. For $y\in B_{\theta_{N,1}}$ we know by \eqref{4.10} and \eqref{4.12} that $\E(Q_{N,0,1} (0,y)\1_{A_{N,1}})^k$ is smaller than
\bea
\hspace{-2mm}&&\hspace{-2mm} \textstyle{\E\bigl(\1_{y\in T_{N,0}}P_y\left(\ell_{\theta_{N,1}}(y)\gamma_{N,0}(y)> u\right)P(\sigma(y)\leq \theta_{N,1})\1_{A_{N,1}} \bigr)^k}\nonumber\\
\hspace{-2mm}&\leq&\hspace{-2mm}\textstyle{\E\bigl(\1_{y\in T_{N,0}}P_y\bigl(\ell_{B_{N,1}^2}(y)\gamma_{N,0}(y)> u\bigr)P(\sigma(y)\leq \theta_{N,1})\1_{A_{N,1}}\bigr)^k}
\nonumber\\
\hspace{-2mm}&+&\hspace{-2mm}e^{-c_4(\log \theta_{N,0})^2} \P(y\in T_{N,0}). \label{4.64}\quad \quad
\eea
The contribution to $\E(Q_{N,0,1} (0,y)\1_{A_{N,1}})^k$ of the second term in the right hand side of \eqref{4.64} is negligible because $\theta_{N,1}^d c_{N,0}^{-\a}\epsilon_{N,0}^{-\a}\ll \rho_N (d)$. 
It remains to bound the first summand in \eqref{4.64}. Recall from the proof of Lemma \ref{lemma4.1} that $\ell_{B_{N,1}^2}(y)$ has exponential distribution with mean value $g_{B_{N,1}^2}(y)$, which on $A_{N,1}$ is bounded above by $c( \log \theta_{N,1}\1_{d=2}+\1_{d\geq 3})$, for all $y\in B_{\theta_{N,1}}$. Also, recall that $P(\sigma(y)\leq\theta_{N,1})\leq P(\min_{y'\sim y}\sigma(y)\leq\theta_{N,1})$. Thus, for all $y\in B_{\theta_{N,1}}$
\bea\label{4.66}
\hspace{-2mm}&&\hspace{-2mm}\E \bigl(\1_{A_{N,1}}\PP(\ell_{B_{N,1}^2}(y)\gamma_{N,0}(y)>u)\1_{y\in T_{N,1}}\bigr)^k\nonumber\\
\hspace{-2mm}&\leq&\hspace{-2mm}\textstyle{ \E\Bigl(\1_{A_{N,1}}P(\min_{y'\sim y}\sigma(y')\leq \theta_{N,1})\exp(-u (\gamma_{N,0}(y)c(\1_{d\geq 3}+ \log \theta_{N,1}\1_{d=2}))^{-1}})\Bigr)^k\nonumber\\
\hspace{-2mm}&\leq&\hspace{-2mm} \textstyle{\E \exp(-ku (\gamma_{N,0}(0)c(\1_{d\geq 3}+ \log \theta_{N,1}\1_{d=2}))^{-1}) \E\bigl(P(\min_{y'\sim y}\sigma(y')\leq \theta_{N,1})\bigr)^k}.\quad\quad\ \ 
\eea
We bound the terms in \eqref{4.66} separately. The expectation with respect to $\gamma_{N,0}(0)$ is bounded above by $Cu^{-\a} c_{N,0}^{-\a}(\1_{d\geq 3}+\log \theta_{N,1}\1_{d=2})$, for some $C\in(0,\infty)$. Moreover,
\bea\label{4.68}
\textstyle{\sum_{y\in B_{\theta_{N,1}}}\E\bigl(P(\min_{y'\sim y}\sigma(y')\leq \theta_n)\bigr)^k}
\leq\textstyle{\sum_{y} c\E(\1_{A_{N,1}}P(\sigma(y)\leq \theta_{N,1}))^k+e^{-c'N}},\ \ 
\eea
where $c,c'\in(0,\infty)$ and where we used the definition of $A_{N,1}$. By Lemma \ref{lemma3.3}, \eqref{4.68} is bounded above by $c_5  \theta_{N,1}(\log\theta_{N,0})^{-k}$, for $d=2$. For $d\geq 3$ and $k=2$,  the same lemma implies that \eqref{4.68} is smaller than $c_5\theta_{N,1}^{1/2}$. Since the right hand side of  \eqref{4.68} is decreasing in $k$ the same is true for $k=4$. Collecting \eqref{4.66}-\eqref{4.68} yields
\be\label{4.69}
\textstyle{\sum_{y\in B_{\theta_{N,1}}}\E \bigl(\PP(\ell_{B_{N,1}^2}(y)\gamma_{N,0}(y)>u)\1_{y\in T_{N,0}}\bigr)^k\leq K u^{-\a}\rho_N(d,k)},
\ee
for some $K\in (0,\infty)$. This finishes the proof of Lemma \ref{lemma4.3}.
\end{proof}
We are now ready to present the proof of Lemma \ref{lemma4.2}.
\begin{proof}[Proof of Lemma \ref{lemma4.2}]
Without loss of generality, we prove Lemma \ref{lemma4.2} for $j=1$ only. We distinguish whether $d=2$ or $d\geq 3$. We begin with $d\geq 3$. 
By \eqref{defpi}, the variance of $\nu_{N,i}^1$ is bounded above by
\bea
\hspace{-2mm}&&\hspace{-2mm}\textstyle{k_{N,1}^2(t)\sum_{x\in B_{d_{N,1}(t)}}(\overline\pi_N^t(x))^2\E\bigl(Q_{N,0,1} (x)\bigr)^2 }\label{4.70}\\
\hspace{-2mm}&+&\hspace{-2mm}\textstyle{2 k_{N,1}^2(t)\sum_{ x\neq x', |x-x'|\leq 2\theta_{N,1}} \overline\pi_N^t(x)\overline\pi_n^t(x')\E\bigl(Q_{N,0,1} (x)Q_{N,0,1} (x')\bigr)\label{4.71}},\quad\quad
\eea
where we used that $Q_{N,0,1} (x)$ only depends on $\tau(x)$ for $x\in B_{\theta_{N,1}}(x)$. 
 By Lemma \ref{lemma4.3}, \eqref{4.70} is bounded above by\be\label{4.78}
\leq\textstyle{\sum_{x\in B_{d_{N,1}(t)}}(k_{N,1}(t)\overline\pi_N^t(x))^2\rho_N (d)\ll K(u,t)(\log\theta_{N,1})^{4}\theta_{N,1}^{-1/2}},
\ee
for $K(u,t)\in (0,\infty)$. Since this satisfies \eqref{lemma421}, it suffices to control \eqref{4.71}. 

 Fix $x,x'\in B_{d_{N,1}(t)}$. To bound  $ \E Q_{N,0,1} (x)Q_{N,0,1} (x')$ we distinguish two cases, depending on the size of $|x-x'|$.  We define the sets $D_1(x)\equiv B_{\theta_{N,1}}\setminus B_{\sqrt{\theta_{N,1}}(x)\log \theta_{N,1}}(x)$ and $D_2(x)\equiv B_{\sqrt{\theta_{N,1}}(x)\log \theta_{N,1}}(x)$. Let $x'\in D_1(x)$ first. We use the same arguments as in the proof of Lemma \ref{lemma4.3} to bound $ \E\bigl(Q_{N,0,1} (x)Q_{N,0,1} (x')\bigr)$. 
By analogy with \eqref{4.63}-\eqref{4.69}, 
\be\label{4.81}
\textstyle{ \E\bigl(Q_{N,0,1} (x)Q_{N,0,1} (x')\1_{A_{N,1}}\bigr)}\leq \textstyle{ K' 2d u^{-\a}c_{N,0}^{-\a}\E \1_{A_{N,1}} I_{\theta_{N,1}}(x,x')+c_{N,0}^{-3\a/2},}
\ee
for some $K'\in(0,\infty)$ and where $I_{\theta_{N,1}}(x,x')$ is the expected intersection range of $J$ starting in $x$ and an independent copy $J'$ starting in $x'$, 
\be\label{4.82}
I_{\theta_{N,1}}(x,x')\equiv\textstyle{\sum_{y: |x-y|\wedge|x'-y|\leq\theta_{N,1}}E_x E'_{x'}\1_{\sigma(y)\leq \theta_{N,1}}\1_{\sigma'(y)\leq \theta_{N,1}}}.
\ee
We bound $I_{\theta_{N,1}}(x,x')$ by
\be\label{4.83}
\textstyle{I_{\theta_{N,1}}(x,x')\leq P_xP'_{x'}\bigl(\max_{y\in\Z^d}(\sigma(y)\vee\sigma'(y))\leq \theta_{N,1}\bigr)\bigl(E_{x}R_{\theta_{N,1}}\vee E_{x'}R_{\theta_{N,1}}\bigr).}
\ee
Since $x'\in D_1(x)$, the probability in \eqref{4.83} is smaller than the probability that, during the time interval $[0,\theta_n]$, $J$ (or $J'$) visits a point that is at distance at least  $\tfrac 12\sqrt{\theta_n}\log \theta_n$ from its starting point. By \eqref{lemma321} of Lemma \ref{lemma3.2}, on $A_{N,1}$, this is 
bounded above by $e^{-c_4/4(\log\theta_{N,0})^2}$. Thus, by Lemma \ref{lemma3.3},
\be\label{4.84}
\E I_{\theta_{N,1}}(x,x')\1_{A_{N,1}}\leq c_5 \theta_{N,1}\exp(-c' (\log\theta_{N,0})^2) ,
\ee 
where $c'=c_4/4$. 
We use \eqref{4.84} and get that, for $x\in B_{d_{N,1}(t)}, x'\in D_1(x)$,
\be\label{4.85}
\E\bigl(Q_{N,0,1} (x)Q_{N,0,1} (x')\1_{A_{N,1}}\bigr)\leq c_5(\rho_N (d)\theta_{N,1} e^{-c' (\log\theta_{N,0})^2}+c_{N,0}^{-3\a/2}).
\ee
By \eqref{4.76} of Lemma \ref{lemma3.7} we have for any ball $B_r(y)$ with $r\leq d_{N,1}(t)$ that
\bea
\textstyle{k_{N,1}(t) \sum_{x\in B_r(y)}\overline\pi_N^t(x)}\leq \tfrac{\log\log a_{N,1}}{\theta_{N,1}}\min(r^2, a_{N,1}).\quad\quad \label{4.86}
\eea
By \eqref{4.86}, 
\bea\label{4.87}
\textstyle{(k_{N,1}(t))^2\sum_{x\in B_{d_{N,1}(t)}, x'\in D_1(x)}\overline\pi_N^t(x)\overline\pi_N^t(x')\leq c_3 k_{N,1}(t)(\log\log a_{N,1})^2 }.
\eea
Combining \eqref{4.85} and \eqref{4.87}, 
\be
\textstyle{\sum_{x\in B_{d_{N,1}(t)}, x'\in D_1(x)} \overline\pi_{N}^t(x)\overline\pi_{N}^t(x')\E\bigl(Q_{N,0,1} (x)Q_{N,0,1} (x')\bigr)\leq K(u,t) e^{-c' (\log\theta_{N,0})^2} },
\ee
which is smaller than the right hand side of \eqref{lemma421}. Let $x'\in D_2(x)$. By Cauchy-Schwarz inequality, $\E\bigl(Q_{N,0,1} (x)Q_{N,0,1} (x')\bigr)\leq \rho_N (d)$ and so, by \eqref{4.86}, 
\bea\label{4.88}
\textstyle{(k_{N,1}(t))^2\sum_{x\in B_{d_{N,1}(t)}, x'\in D_2(x)}\overline\pi_N^t(x)\overline\pi_N^t(x')\rho_N (d)\leq c_3k_{N,1}(t)\rho_N (d)(\log \theta_{N,1})^2,}
\eea
as desired in \eqref{lemma421}. This finishes the proof of Lemma \ref{lemma4.2} for $d\geq 3$. 

Let $d=2$. Writing $\overline Q_{N,r_{i+1},r_{i}} (x)\equiv Q_{N,r_{i+1},r_{i}} (x)-\E Q_{N,r_{i+1},r_{i}} (x)$, the left hand side of \eqref{lemma4211} is given by
\bea\label{maind2}
\textstyle{k_{N,1}^4(t)\sum_{(x_1,x_2,x_3,x_4):\min_{i,j}|x_i-x_j|\leq2\theta_{N,1} }\E\bigl(\prod_{j=1}^4\inf_{r\in {\cal I}_i}\overline\pi_{N,r}^t(x_j)\overline Q_{N,r_{i+1},r_{i}} (x_j)\bigr)}\label{4.711},\quad\quad
\eea
where we used that the $\overline Q_{N,r_{i+1},r_{i}} $'s are independent whenever $\min_{i,j}|x_i-x_j|>2\theta_{N,1}$. Let us distinguish three cases, depending on the size of $\max_{i,j}|x_i-x_j|$. Suppose first that $\max_{i,j}|x_i-x_j|\leq \theta_{N,1}^{1/2} \log\log\theta_{N,1}$. Then, by Cauchy-Schwarz inequality and Lemma \ref{lemma4.3},
\be
\textstyle{\E\bigl(\prod_{j=1}^4\overline Q_{N,r_{i+1},r_{i}} (x_j)\bigr)\leq \rho_N (2,4) + 24(\rho_N (2,2))^2 }.
\ee
By \eqref{defpi} and since $\rho_N (2,4)\geq(\rho_N (2,2))^2$, it suffices to control the following quantity:
\bea
\hspace{-2mm}&&\hspace{-2mm}\textstyle{k_{N,1}^4(t)\sum_{(x_1,\ldots,x_4):\max_{i,j}|x_i-x_j|\leq \theta_{N,1}^{1/2}\log\log\theta_{N,1}}\prod_{j=1}^4\overline\pi_N^t(x_j)\rho_N (2,4)}.
\eea
This is smaller than
\bea\textstyle{k_{N,1}^4(t)\sum_{x_1\in B_{d_{N,1}(t)}}\overline\pi_N^t(x_1)\prod_{j=2}^4\sum_{x_l\in B_{4\theta_{N,1}^{1/2}\log\log\theta_{N,1}}(x_1)}\overline\pi_N^t(x_j)\rho_N (2,4)}.
\label{4.7112}
\eea
By \eqref{4.74} of Lemma \ref{lemma3.7} we have that
\be
\textstyle{k_{N,1}(t) \sum_{x\in B_r(y)}\overline\pi_N^t(x)\leq \theta_{N,0}^{-1}(r^2\wedge a_{N,1})\bigl(\log (a_{N,1}/|y|^2)\vee \log\log a_{N,1}\bigr).}\label{4.861}
\ee
This proves that the contribution to \eqref{maind2} coming from $x_i$'s such that $\max_{i,j}|x_i-x_j|\leq \theta_{N,1}^{1/2} \log\log\theta_{N,1}$   is bounded above by $K(u,t)(\log\log a_{N,1})^6 (\log\theta_{N,0})^{-4}$. Now suppose that $\max_{i,j}|x_i-x_j|>\theta_n^{1/2}\log\theta_{N,1}$. Assume that $|x_1-x_2|=\max_{i,j}|x_i-x_j|$. For $j=3,4$ we have the bound $|\overline Q_{N,r_{i+1},r_{i}} (x_j)|\leq\theta_{N,1}^{d}$. 
Then, the only term that remains in the expectation in \eqref{4.711} is $\E\overline Q_{N,r_{i+1},r_{i}} (x_1)\overline Q_{N,r_{i+1},r_{i}} (x_2)$, which is smaller than
\be\label{4.722a}
\textstyle{\E(Q_{N,0,1} (x_1)Q_{N,0,1} (x_2)\1_{A_{N,1}})+ e^{-cN}\leq 2\rho_N (2,2)e^{-c_4(\log\theta_{N,0})^2},}
\ee
where $c\in (0,\infty)$, and where the second inequality is proved as in \eqref{4.85}. 
Hence, the contribution to \eqref{maind2}  coming from $x_i$'s such that $\max_{i,j}|x_i-x_j|>\theta_{N,1}^{1/2}\log\theta_{N,1}$ is bounded above by $e^{-c(\log\theta_{N,0})^2}$. Finally, let $\theta_{N,1}^{1/2}\log\log\theta_{N,1}\leq\max_{i,j}|x_i-x_j|\leq \theta_{N,1}^{1/2}\log\theta_{N,1}$. Suppose again that $|x_1-x_2|=\max_{i,j}|x_i-x_j|$. By Cauchy-Schwarz  and Lemma \ref{lemma4.3},
\bea\label{4.722}
\textstyle{\E\bigl(\prod_{j=1}^4\overline Q_{N,r_{i+1},r_{i}} (x_j)\bigr)}\leq\textstyle{ \bigl\{\E(Q_{N,0,1} (x_1)Q_{N,0,1} (x_2))^2\rho_N (2,4)\bigr\}^{1/2}+c(\rho_N (2,2))^2}.\ 
\eea
As in \eqref{4.722a} and \eqref{4.85} we know that the first summand in \eqref{4.722} is bounded above by
\be\label{4.722aa}
\textstyle{\E(Q_{N,0,1} (x_1)Q_{N,0,1} (x_2))^2\leq \rho_N (2,2)e^{-c_4 (\log\log\theta_{N,0})^2}},
\ee
which is larger than the second summand. Hence, by \eqref{4.861}, the contribution to \eqref{maind2} coming from $x_i$'s such that $\theta_{N,1}^{1/2}\log\log\theta_{N,1}\leq\max_{i,j}|x_i-x_j|\leq \theta_{N,1}^{1/2}\log\theta_{N,1}$ is bounded above by $K'(u,t) e^{-c_4/4(\log\log\theta_{N,0})^2}$. The proof of Lemma \ref{lemma4.2} is finished.
\end{proof}

\subsection{Convergence of $\wt\sigma_n^t(u,\infty)$}\label{S4.3}
We establish that, $\P$-a.s., $\lim_{n\rightarrow\infty}\E\wt\sigma_n^t(u,\infty)=0$. 

As in the proof of Lemma \ref{lemma3.5} we consider subsequences $c_{N,r}=\exp(N+r)$ first (see the paragraph below \eqref{3.19a}). With the notation of \eqref{qnrs} and \eqref{defpi},
\be\label{4.93a}
\textstyle{\E\sup_{r\in[0,1)}\wt\sigma_{N,r}^t(u,\infty)\leq k_{N,1}(t)\sum_{x\in B_{d_{N,1}(t)}}\overline\pi_{N}^t(x)\E ( Q_{N,0,1}^u(x))^2\equiv \E\sigma_N^t(u,\infty) }.\ 
\ee
As in Lemma \ref{lemma4.3} one can show that
 $ \E (Q_{N,0,1}^u(0))^2 \leq \rho_N^u(d)$, and so, by Lemma \ref{lemma3.7} there exists $K'\in (0,\infty)$ such that 
\bea\label{4.94}
\textstyle{\E\sigma_N^t(u,\infty)}\leq K'tu^{-\a}((\log\theta_{N,0})^{-1}\1_{d=2}+\theta_{N,1}^{-1/2}\1_{d\geq3}).
\eea
For $d\geq 3$, this is summable in $N$ and we get that, $\P$-a.s., $\lim_{n\rightarrow\infty}\E\wt\sigma_{n}^t(u,\infty)=0$. When $d=2$,  the right hand side of \eqref{4.94} is not summable in $N$. Thus, to prove that $\wt\sigma_{n}^t(u,\infty)$ vanishes $\P$-a.s., we bound the variance of $\sigma_N^t(u,\infty)$. 
Using the same calculations as in the proof of \eqref{lemma4211} of Lemma \ref{lemma4.2} one can show that the variance of $\sigma_N^t(u,\infty)$ is bounded above by $N^{-4+\e}$. Since this is summable in $N$, we obtain that, $\P$-a.s., $\lim_{n\rightarrow\infty}\E\wt\sigma_{n}^t(u,\infty)=0$ for $d=2$ as well. 
 
\subsection{Verification of Condition (C-4)}\label{S4.4}
We follow the same strategy as in the verification of (C-2). We first prove that $\lim_{n\rightarrow\infty}\E m_n^t(\e)\leq C(t) \varepsilon^{1-\a}$. Then, one can show as in Section \ref{S4.2} that, $\P$-a.s., $\lim_{n\rightarrow\infty} m_n^t(\e)\leq C(t) \e^{1-\a}$. Since this is very similar to Section \ref{S4.2}, we leave the details to the interested reader. Let us bound $\E m_n^t(\e)$.
Since the $\tau$'s are i.i.d. and since $a_n\theta_n^{d}\P(A_n^c)\leq n^{-3}$, it suffices to find $c\in(0,\infty)$ such that
\bea\label{4.95}
\textstyle{ \sum_{y\in B_{\theta_n}} \E(M_n^\e(0,y)\1_{A_n})}\leq c \theta_n a_n^{-1}\e^{1-\a}.
\eea
Fix $y\in B_{\theta_n}$ and set $h(\theta_n)=\theta_n-k(\theta_n)$ for $k(\theta_n)=\theta_n^{3/4}$. As in \eqref{4.4} and \eqref{4.10}, by the Markov property,
\bea
M_n^\e(0,y)\hspace{-2mm}&\leq&\hspace{-2mm}\e  P(\sigma(y)\in (h(\theta_n), \theta_n))\1_{y\in T_n}\label{4.97}\\
\hspace{-2mm}&+&\hspace{-2mm}P(\sigma(y)\leq h(\theta_n))\gamma_n(y)\EE_y\bigl(\ell_{\theta_n}(y)\1_{\ell_{k(\theta_{n})}(y) \gamma_{n}(y)\leq \varepsilon}\bigr)\1_{y\in T_n}\equiv M_{n,1}(y)+M_{n,2}(y) \nonumber
\eea
Let us first establish, that the sum over $M_{n,1}(y)$ is as in \eqref{4.95}. Following the same argumentation as between \eqref{4.14} and \eqref{4.17}, we can show that
\be\label{4.98}
\textstyle{M_{n,1}(y)\leq \e (P(\min_{y'\sim y}\sigma(y)\in (h(\theta_n), \theta_n))+c_n^{-\theta/2})\1_{y\in T_n}}.
\ee
Since $\min_{y'\sim y}\sigma(y)$ is independent of $\gamma_n(y)$, we get by Lemma \ref{lemma3.3} that
\be
\textstyle{\sum_y \E(M_{n,1}(y)\1_{A_n})\leq\sum_{y}\E\bigl[\1_{A_n}P(\sigma(y)\in (h(\theta_n), \theta_n))\1_{y\in T_n}\bigr]\leq c_5 c_n^{-\a}\epsilon_n^{-\a} \theta_n^{3/4}},\label{4.99}
\ee 
as desired.
Let us now bound the expectation of $M_{n,2}(y)$. First we calculate the expected value with respect to $\EE_y$ in $M_{n,2}(y)$. As in \eqref{4.12} and \eqref{4.13}, we get that, up to an error of the order of $e^{-c_4(\log \theta_n)^2}$,  on $A_n$,  we can bound $\ell_{\eta(B_n^1)}(y)\leq\ell_{k(\theta_n)}(y)$ and $ \ell_{\theta_{n}}(y)\leq \ell_{\eta(B_n^2)}(y)$, where $B_n^1=B_{k(\theta_n)^{1/2}(\log \theta_n)^{-2}}(y)$ and $ B_n^2=B_{\theta_n^{1/2} \log \theta_n}(y)$, for all $y\in B_{\theta_n}\cap T_n$. Setting $\e(y)\equiv\{\ell_{\eta(B_n^1)}(y) \gamma_{n}(y) \leq \varepsilon\}$, we get
\bea\label{4.100}
\EE_y\ell_{\theta_{n}}(y)  \1_{ \ell_{k(\theta_{n})}(y)\gamma_n(y)\leq \varepsilon}= \bigl(\EE_y \1_{\e(y)}\ell_{\eta(B_n^1)}(y)+\EE_y\1_{\e(y)}\bigl(\ell_{\eta(B_n^2)}(y)-\ell_{\eta(B_n^1)}(y)\bigr)\bigr).\quad\quad
\eea
By the strong Markov property, the second term in \eqref{4.100} is given by
\bea\label{4.101}
\textstyle{\sum_{z\in \partial B_n^1}\EE_y\bigl(\1_{\e(y)} \1_{J(\eta(B_n^1))=z}\bigr)E_z\int_0^{\eta(B_n^2)}\1_{J(s)=y} ds\leq g_{B_n^2}(y)\PP_y(\e(y)),}\ 
\eea
where we used $E_z\int_0^{\eta(B_n^2)}\1_{J(s)=y} ds\leq g_{B_n^2}(y)$.
The first term in \eqref{4.100} equals
\be\label{4.102}
\textstyle{g_{B_n^1}(y)\bigl[1- \exp\bigl(-\varepsilon /(\gamma_{n}(y)g_{B_n^1}(y))\bigr)\bigr]=g_{B_n^1}(y)\PP_y(\e(y))}.
\ee
Using \eqref{4.101} and \eqref{4.102} and the fact that $g_{B_n^1}(y)\leq g_{B_n^2}(y)$, \eqref{4.100} is bounded by
\bea\label{4.103}
 \hspace{-2mm}&&\hspace{-2mm} \textstyle{2 g_{B_n^2}(y)\bigl[1- \exp\bigl(-\varepsilon/ (\gamma_{n}(y)g_{B_n^1}(y))\bigr)\bigr]}\nonumber\\
\hspace{-2mm}&\leq&\hspace{-2mm}\textstyle{2 c_8/c_7{\bar g}_n^d(y)\bigl[1- \exp\bigl(-\varepsilon /(\gamma_{n}(y) {\bar g}_n^d(y))\bigr)\1_{A_n} + c_1(\log \theta_n\1_{d=2}+\1_{d\geq3})\1_{A_n^c}\bigr]},\quad\quad
\eea
where by \eqref{4.23} (for $d\geq 3$) and Lemma 3.3 in \cite{Ce11} (for $d=2$), ${\bar g}_n^d(y)\equiv c_7(\log \theta_n\1_{d=2}+ g_{\infty}(y)\1_{d\geq 3})$.
Together with \eqref{4.14}, $\E M_{n,2}(y)\1_{A_n}$ is bounded above by
\bea\label{4.104}
\textstyle{\e\ \E\bigl[P(\min_{y'\sim y}\sigma(y')\leq \theta_n){\bar g}_n^d(y)\gamma_{n}(y)\bigl(1-e^{-\varepsilon (\gamma_{n}(y){\bar g}_n^d(y))^{-1}}\bigr) \1_{y\in T_n}\1_{A_n}\bigr].}\quad
\eea
An asymptotic analysis and the fact that ${\bar g}_n^d(y)\leq c_7(\log \theta_n\1_{d=2}+ c_6\1_{d\geq 3})$ yield
\bea\label{4.105}
\eqref{4.104} \hspace{-2mm}&\leq& \hspace{-2mm}\textstyle{c'\ \e \ \E\Bigl[P(\min_{y'\sim y}\sigma(y')\leq \theta_n)  e^{-2\varepsilon(c' \gamma_{n}(y) {\bar g}_n^d(y))^{-1}}\1_{y\in T_n}\1_{A_n}\Bigr]}\nonumber\\
 \hspace{-2mm}&\leq& \hspace{-2mm}\textstyle{c''\ (\log \theta_n\1_{d=2}+ c_6\1_{d\geq 3})^\a c_n^{-\a} \e^{1-\a}  \E( P(\min_{y'\sim y}\sigma(y')\leq \theta_n)\1_{A_n}).}\quad
\eea
for some $c',c''\in (0,\infty)$. Thus, evoking Lemma \ref{lemma3.3}, we get that
\be
\textstyle{\sum_{y}\E M_{n,2}(y)\1_{A_n}\leq c'' c_6 c_5 t \e^{1-\a},}
\ee
i.e. \eqref{4.95} is satisfied. Thus, 
$\lim_{n\rightarrow\infty} \E m_n^t(\e)\leq c\varepsilon^{1-\a}$. The verification of (C-4) now follows as in Section \ref{S4.2}.
\subsection{Verification of Condition (C-5)}\label{S4.5} We proceed as in the verification of (C-2) and (C-4) to establish that (C-5) is satisfied. Namely, we first take the expected value in the left hand side of \eqref{c51} and \eqref{c52} and prove that both are bounded above by $C(u,t)\e$ for some $C(u,t)\in(0,\infty)$. Then, one can proceed as in Section \ref{S4.2} to obtain $\P$-a.s.~upper bounds. Since the proofs are similar, we only prove the claim for \eqref{c51}. The expectation of the left hand side of \eqref{c51} is given by
\bea\label{4.106}
&&\textstyle{\sum_{(x,k)\in{\cal A}_n} (k\theta_n)^{-d/2}e^{-c_2|x|^2/k\theta_n}\sum_{y}\E Q_n^u(x,y)}\nonumber\\
&\leq&\textstyle{ 2\nu(u,\infty) a_n^{-1}\theta_n\sum_{k=1}^{k_n(t)}(k\theta_n)^{-d/2}\sum_{|x|^2< \e k\theta_n\ \vee\ |x|^2>k\theta_n/\e}e^{-c_2|x|^2/k\theta_n}},
\eea
where we used that by \eqref{4.2}, for $n$ large enough, the second sum in \eqref{4.106} is smaller than $ 2\nu(u,\infty)\theta_n/a_n$. Let us first control the contribution to the right hand side of \eqref{4.106} coming from $x\in B_{\sqrt{\e k\theta_n}}$. Bounding the exponential term by one and using the fact that $|\{x: |x|^2<\e k\theta_n\}|\leq C\e^{d/2} (k\theta_n)^{d/2}$ for some $C\in(0,\infty)$,  this contribution is bounded above by $K(u,t)\e^{d/2}$ for  $K(u,t)\in (0,\infty)$. Also, the sum over $x\in B_{d_n(t)}\setminus B_{(k\theta_n/\e)^{1/2}}$ of $(k\theta_n)^{-d/2}e^{-c_2|x|^2/k\theta_n}$ is bounded above by $K'(t) e^{-c_2 /\e}$.
Thus,  \eqref{4.106} is bounded above by $K(u,t)\e^{d/2}$. 
The verification of (C-5) can now be finished as in Section \ref{S4.2}.
\subsection{Conclusion of the proof}\label{S4.6}
We are now ready to conclude the proof of Theorem \ref{theorem1.5}. By Lemma \ref{lemma4.1}, for all $u>0$, $t>0$, $\E\wt\nu_n^t(u,\infty)\rightarrow t\nu(u,\infty)$ as $n\rightarrow \infty$. Together with the results of Section \ref{S4.2} this shows that (C-2) is satisfied. By the results of Section \ref{S4.3}, $\wt\sigma_n^t(u,\infty) $ tends $\P$-a.s.~to zero, proving   (C-3). By the same arguments as those used to establish (C-2),  (C-4) follows from the results of Section \ref{S4.4}. Finally, it follows from the results of Section \ref{S4.5} that (C-5) is satisfied. Hence, we may apply Proposition \ref{proposition3.6} and get that $\overline S_n^{J,b}\stackrel{J_1}{\Longrightarrow}V_{\a}$. By Lemma \ref{lemma3.5} this proves that $S_n^{J,b}\stackrel{J_1}{\Longrightarrow}V_{\a}$, as claimed in Theorem \ref{theorem1.5}.


\section{Aging in BATM}\label{S5}
In this section we present the proofs of Theorem \ref{theorem1.4} and Theorem \ref{theorem1.6}. Section \ref{s51}, respectively Section \ref{s52} and Section \ref{s53}, contains the proof of Theorem \ref{theorem1.4} for $i=1$, respectively 
$i=2$ and $i=3$.
We then prove Theorem \ref{1.6} in Section \ref{s54}. The proofs in Sections \ref{s51}-\ref{s53} follow a common scheme. We show that for each $i\in \{1,2,3\}$,  as $s\rightarrow\infty$, 
${\cal C}^i_s(1,\rho)$, coincides  with the probability of ${\cal M}_{s,\rho}\equiv\{{\cal R}_{s}\cap (1,1+\rho) =\emptyset\}$, where ${\cal R}_{s}=\{S_{s}^{J,b}(t), t\geq 0\}$ is the range of $S_{s}^{J,b}$. We then use that $\P$-a.s., 
\be\label{5.000}
\textstyle{\lim_{s\rightarrow\infty}\PP({\cal M}_{s,\rho})={\asl_\a}(1/(1+\rho)).}
\ee 
The proof of \eqref{5.000} closely follows that of Theorem 1.6 in \cite{G10a}. We thus only sketch it here. Namely, it relies on the continuity of the overshoot function that maps $Y\in D[0,\infty)$ to 
\be\label{os}
\chi_u(Y)=Y({\cal L}_u(Y)) - u,\quad u>0,
\ee
where ${\cal L}_u\equiv \inf\{ t\geq 0: Y(t)>u\}$ is the time of the first passage of $Y$ beyond the level $u>0$. For L\'evy motions having $\PP$-a.s.~diverging paths, this mapping is $\PP$-a.s.~continuous on $D[0,\infty)$ equipped with Skorohod's $J_1$ topology. Now, ${\cal M}_{s,\rho}=\{\chi_1(S_{s}^{J,b}) \geq 1+\rho\}$ and by Theorem \ref{theorem1.5}, $\P$-a.s., $S_{s}^{J,b}\stackrel{J_1}{\Longrightarrow} V_\a$. Since $V_\a$ has $\PP$-a.s.~diverging paths we deduce that, $\P$-a.s.,
\be\label{5.8a}
\textstyle{\lim_{s\rightarrow\infty}\PP({\cal M}_{s,\rho})=\PP(\xi_1(V_\a)\geq 1+\rho)=\asl_\a(1/(1+\rho)),\quad \rho>0,}
\ee
where the last equality follows from the arcsine law for stable subordinators (see Section III in \cite{Ber}). Given \eqref{5.8a}, it remains to establish that, $\P$-a.s.,
\be\label{5.3a}
\textstyle{\lim_{s\rightarrow\infty}|\PP({\cal M}_{s,\rho})-\PP({\cal M}^i_{s,\rho})|=0, \quad \forall \rho> 0,}
\ee
where ${\cal M}^i_{s,\rho}$ stands for the events appearing in the right hand sides of \eqref{C1}-\eqref{C3}, 
namely
${\cal C}^i_s(1,\rho)=\PP({\cal M}^i_{s,\rho})$. We will verify  \eqref{5.3a} in Sections \ref{s51}-\ref{s53}.

For this, let  $A'\subseteq A$ (where $A$ as in \eqref{3.7a}) such that $\P(A')=1$ and such that for all $\omega\in A'$, $S_s^{J,b}\stackrel{J_1}{\Longrightarrow} V_{\a}$. Fix $\omega\in A'$. We write $a_s\equiv a_{\lfloor s\rfloor}$ and similarly for $\theta_s, k_s, \epsilon_s$, and $\d_s$ (see Theorem \ref{theorem1.4} and Section \ref{s3.2} for their definitions).

\subsection{Convergence of ${\cal C}^1_s(1,\rho)$}\label{s51}
In this section we prove that \eqref{5.3a} holds for $i=1$.

\noindent \textbf{\emph{Step 1.}} Let $\d>0$ and $s>0$. We prove that for all $s$ large enough, $\PP({\cal M}_{s,\rho} , ({\cal M}^1_{s,\rho})^c)\leq \d$. For $k\in \N$ we define
\bea
\textstyle{B_k\equiv\{\sum_{i=1}^{k}Z_{s,i}^{J}< 1}, \quad\mbox{and}\quad \textstyle{\sum_{i=1}^{k+1}Z_{s,i}^{J}> (1+\rho)\}}.
\eea   
Then, ${\cal M}_{s,\rho}=\bigcup_{k\geq 1} B_k$. For $s$ large enough, there exists $T>0$ large enough such that 
\be\label{56789}
\textstyle{\PP({\cal M}_{s,\rho},({\cal M}^1_{s,\rho})^c)\leq \PP(\bigcup_{k\leq k_s(T)}B_k, ({\cal M}^1_{s,\rho})^c)+\d},\quad \forall s>s',
\ee
where $k_s(T)=\lfloor a_s T\rfloor/\theta_s$. To see the claim of \eqref{56789} note that, since $S_s^{J,b}\stackrel{J_1}{\Longrightarrow} V_{\a}$,
\bea
\textstyle{\PP(\bigcup_{k\geq k_s(T)} B_k, ({\cal M}^1_{s,\rho})^c)\leq\PP(S^{J,b}_s(T)<1)\leq\PP(V_{\a}(T)<1+\d)+\d},\quad\label{5.4aaa}
\eea
which vanishes as $T\rightarrow\infty$ and proves \eqref{56789}. 
Let us now study $B_k$ for fixed $k$. By Lemma \ref{lemma3.5}, we know that for $s$ large enough, the contribution to $Z_{s,k+1}$ coming from $y\notin T_s$ is bounded above by $\d_s$ and therefore, on $B_k$, there exists $x\in T_s$ such that $\ell_{\theta_s (k+1)}(x)-\ell_{\theta_s k}(x)>0$. Moreover, as in the proof (C-2) $\Rightarrow$ (B-2) (see proof of Proposition \ref{proposition3.6}), one can show that  this $x$ is unique. Therefore, we have that $\ell_{a_s T}(x)\gamma_s(x)>\rho-\d_s$. We prove now that, for all $s$ large enough, with $\PP$-probability larger than $1-\d$, we know on $B_k$ that $\gamma_s(x)>\d_s^{1/2}$. 
For this, note that by definition of $A'$ (last paragraph of Section \ref{S5}) and Lemma \ref{lemma3.2}, for all but finitely many values of $\lfloor s\rfloor$, $\PP(\eta(B_{d_s(T)})> a_s T)\geq 1- \d$. Moreover, by the same lemma and \eqref{theorem311}, we have for all $s$ large enough, $g_{B_{d_s(T)}(x)}(x)\leq c_9\log a_s\1_{d=2}+c_6\1_{d\geq3}$ for all $x\in B_{d_s(T)}$. Therefore, for $s$ large enough,  
\bea
\hspace{-2mm}&&\hspace{-2mm}\textstyle{\PP(\exists x: \gamma_s(x)\ell_{a_s T}(x)>\rho-\d_s, \g_s(x)\leq \d_s^{1/2})}\nonumber\\
\hspace{-2mm}&\leq&\hspace{-2mm}\textstyle{\d+ \sum_{x\in B_{d_s(T)}} P_x(\ell_{\eta(B_{d_s(T)}(x))}(x)>\rho\d_s^{-1/2})}
\leq\d+|B_{d_s(T)}|\exp(-c'(\log a_s)^2),\quad
\eea
where we used that $\d_s^{-1/2}\geq c_0( \log a_s)^3$. Therefore, for $s$ large enough, we know with high $\PP$- probability that, on $B_k$, there exists a unique $x\in T_s'\equiv\{x\in T_s:\gamma_s(x)>\d_s^{1/2}\}$ that contributes to $Z_{s,k+1}$.  
Since this holds for all $1\leq k\leq M$, we get  for $s$ large enough that 
\bea\label{517}
\textstyle{\PP({\cal M}_{s,\rho}\cap ({\cal M}^1_{s,\rho})^c)\leq\PP(\bigcup_{s'\in\{s,s(1+\rho)\}} M_{s'})+3\d},
\eea
where 
\bea
\textstyle{M_{s'}\equiv\{X(s')\notin T_s'\}\cap\{ \exists s'(1-\d_s)<v<s': v: X(v)\in T_s'\}}.
\eea
Let us now bound $\PP(M_{s'})$ for $s'=s$ and fixed $x\in T_s'$; the proof for $s'=s(1+\rho)$ is the same. We distinguish two cases with respect to $\theta$. We begin with $\theta>0$. Fix $v$ such that $s(1-\d_s)<v<s$ and $X(v)=x$. Let us first bound the probability that there exists $v'$ such that $v\leq v'\leq s$ and $X(v')\notin B_1(x)\equiv\{x\}\cup\{y\sim x\}$. Writing $N_x$ for the number of returns to $x$ before $J$ escapes $ B_1(x)$, we have
\bea\label{542}
\hspace{-2mm}&&\hspace{-2mm}\PP(X(v)=x, \exists v': v\leq v'\leq s: X(v')\notin B_1(x))\nonumber\\
\hspace{-2mm}&\leq&\hspace{-2mm}\textstyle{ P_x(N_x\leq \d(s\d_s^{1/2}\epsilon_s^{-2/\a})^{\theta})+\PP_x(\sum_{i=1}^{\d(s\d_s^{1/2}\epsilon_s^{-2/\a})^{\theta}}(\lambda(x))^{-1}e_i\leq \d_s s)}.\quad
\eea
Since $\max_{y\sim x}(1-p(y,x))\leq(s\d_s^{1/2}\epsilon_s^{-2/\a})^{-\theta}$,  the first probability in \eqref{542} is, smaller than $\d$ . Since $\lambda(x)\leq 2d s^{-1+\theta} \epsilon_s^{-2\theta/\a}\d_s^{-(1-\theta)/2}$ the law of large numbers implies that also the second probability in \eqref{542} is bounded above by $\d$, for $s$ large enough. It remains to bound 
\be\label{5673}
\PP( X(v)=x, X(s)\neq x,  \forall v\leq v'\leq s: X(v')\in B_1(x)).
\ee
By definition of  $T_s'$, $\max_{y\sim x}(\lambda(y))^{-1}\leq \epsilon_s^{-2\theta/\a}\d_s^{\theta/2}s^{-\theta}$, and so, with probability larger than $1-\exp(-\d_s^{-\theta/2})$, there exists $v'$ such that $s-v'\leq s^{-\theta}\epsilon_s^{-2\theta/\a}$ and $X(v')=x$. By the Markov property we have for all such $v'$,
\be
\PP_x( X(s-v')\neq x)\leq \PP_x( e_1\lambda^{-1}(x)<s-v')\leq 1-e^{-s^{-1}\d_s^{(1-\theta)/2}},\label{5118}
\ee
which tends to zero. Thus, \eqref{5673} tends to zero as $s\rightarrow\infty$. This finishes the proof for $\theta>0$. When $\theta=0$, one can bound \eqref{517} directly as in \eqref{5118}. This shows that for all $s$ large enough, $\PP({\cal M}_{s,\rho},({\cal M}^1_{s,\rho})^c)\leq \d$.

\noindent \textbf{\emph{Step 2.}} Let us now show that $\PP(({\cal M}_{s,\rho})^c, {\cal M}^1_{s,\rho})\rightarrow 0$. Let $m_{s,\rho}\equiv (S^{J})^{\leftarrow}(s(1+\rho)) -(S^{J})^{\leftarrow}(s)$, where $(S^{J})^{\leftarrow}(t)=\inf\{v\geq 0: S^{J}(v)> t\}$. Notice that $({\cal M}_{s,\rho})^c\subseteq \{m_{s,\rho}\geq \theta_s\}\cup\{Z_{s,1}>1\}$. By (A-0), $\PP(Z_{s,1}>\rho)$ tends to zero and so, for all $\d>0$ there exists $s$ large enough such that $\PP(({\cal M}_{s,\rho})^c, {\cal M}^1_{s,\rho})\leq \PP(m_{s,\rho}\geq \theta_s, {\cal M}^1_{s,\rho})+\d$. 
 Let us distinguish whether $d\geq 3$ or $d=2$. In the first case we use the identity $X(t)= J((S^{J})^{\leftarrow}(t))$ and get by \eqref{theorem311} of Theorem \ref{theorem3.1}, uniformly in $x\in \Z^d$,
\be\label{5321}
\textstyle{\PP_x(X(s(1+\rho))=x, m_{s,\rho}\geq \theta_s)=\PP_x(J(m_{s,\rho})=x, m_{s,\rho}\geq\theta_s)\leq \int_{\theta_s}^{\infty} v^{-d/2}dv,}
\ee
which is smaller than $\theta_s^{-d/2+1}$ and shows that $\PP(({\cal M}_{s,\rho})^c, {\cal M}^1_{s,\rho})\rightarrow 0$ for $d\geq 3$. 

Let $d=2$. We construct a more precise bound for $\PP(({\cal M}_{s,\rho})^c\cap{\cal M}^1_{s,\rho})$ than $\PP(\{m_{s,\rho}\geq\theta_s\}\cap{\cal M}^1_{s,\rho})+\d$. Assume first that $\dist(\cal R_{s},1+\rho)>\d$ and that there are $t,t'>0$ such that $S_s^{J,b}(t), S_{s}^{J,b}(t+t')\in (1,1+\rho-\d/2)$. Then, $s<S^{J}(k_s(t)\theta_s)< S^{J}(k_s(t+t')\theta_s)<s(1+\rho)$ and so $m_{s,\rho}\geq \theta_s( k_s(t+t')- k_s(t'))$. Moreover, by \eqref{5.4aaa} there exists $T>0$ such that $m_{s,\rho}\leq \theta_s k_s(T)$. Since $\dist(\cal R_{s},1+\rho)>\d$ one can show as in Step 1 that $X(s(1+\rho))=x\in T_s$. But then, on $({\cal M}_{s,\rho})^c\cap {\cal M}^1_{s,\rho}$, we have with probability larger than $1-(\log\theta_s)^{-2}$ that $\ell_{m_{s,\rho}}(x)-\ell_{m_{s,\rho}-\theta_s}(x) >c\log \theta_s/\log\log\theta_s$ for some $c\in(0,\infty)$. 
${\cal R}_s\cap(1,1+\rho)=\emptyset$ or $X(s)\neq x$.) 
By \eqref{theorem311} of Theorem \ref{theorem3.1} we get, for all $x\in \Z^d$, 
\bea
\hspace{-2mm}&&\hspace{-2mm}\PP_x\bigl(J(m_{s,\rho})=x, m_{s,\rho}\in(\theta_s k_s(t), \theta_sk_s(T)), \ell_{m_{s,\rho}}(x) - \ell_{m_{s,\rho}-\theta_s}(x) >\tfrac{c\log\theta_s}{\log\log\theta_s}\bigr)\nonumber\\
\hspace{-2mm}&\leq&\hspace{-2mm}\textstyle{ P_x(\ell_{\theta_sk_s(T)}(x)-\ell_{\theta_s k_s(t)-\theta_s}(x)>\tfrac{c\log\theta_s}{\log\log\theta_s})}\leq c\tfrac{c\log\log\theta_s}{\log\theta_s}\log (T/t),
\eea
which tends, as $s\rightarrow\infty $, to zero. It remains to establish that for all $\d'>0$ there exist $\d>0$, $t>0$ such that $\PP(\dist(\cal R_{s},1+\rho)> \d)\leq 1-\d'$ and $\PP(S_{s}^{J,b}(t+t') \in(1,1+\rho-\d/2)|S_{s}^{J,b}(t) \in(1,1+\rho-\d))\geq 1-\d'$. This can be derived from the convergence of $S_s^{J,b}$ to $V_\a$ and properties of $V_\a$ (see Section III in \cite{Ber}). Thus, $\PP(({\cal M}_{s,\rho})^c, {\cal M}^1_{s,\rho})\leq\d+2\d'$ for $d\geq2$. Together with Step 1 this finishes the proof of \eqref{5.3a} for $i=1$.
\subsection{Convergence of ${\cal C}^2_s(1,\rho)$}\label{s52}
In this section we prove the claim of \eqref{5.3a} for $i=2$.

\noindent \textbf{\emph{Step 1.}} We show that $\PP\bigl({\cal M}_{s,\rho},({\cal M}^2_{s,\rho})^c\bigr)\rightarrow 0$. Note that ${\cal M}_{s,\rho} \subseteq \{m_{s,\rho}\leq\theta_s\}$. Let $x\in B_{a_s}$. Using $X(s)=J((S^{J})^{\leftarrow}(s))$ and the Markov property
\bea\label{5.567}
\PP(X(s)=x,({\cal M}^2_{s,\rho})^{c}, m_{s,\rho}\leq \theta_s)
\leq\textstyle{\PP(X(s)=x) P_x(\eta(B_{(\theta_s\log\theta_s)^{1/2}}(x))\leq\theta_s),}
\eea
where $\eta(B)$ is the exit time of $B$ for $J$ as defined in Section \ref{S3.1}.
By definition of $A'$ (last paragraph of Section \ref{S5}) and Lemma \ref{lemma3.2} we have for all but finitely many values of $\lfloor s\rfloor$, for all $x\in B_{a_s}$,
\bea\label{5.3aa}
\textstyle{P_x(\eta(B_{(\theta_s\log\theta_s)^{1/2}}(x))\leq\theta_s)\leq \exp(-c_4\log\theta_{s}).}
\eea
If we can show that $\PP(X(s)\notin B_{a_s})\leq \d$, then \eqref{5.567}-\eqref{5.3aa} imply that $\PP({\cal M}_{s,\rho}({\cal M}^2_{s,\rho})^c)\leq\d$. To bound $\PP(X(s)\notin B_{a_s})$ we recall that by \eqref{5.4aaa}, with probability larger than $1-\d$, $(S^{J})^{\leftarrow}(s)\leq k_s(T)\theta_s$ for $T>0$ and so, for $s$ large enough,
\be\label{5.4aa}
\textstyle{\PP(X(s)\notin B_{a_s}, (S^{J})^{\leftarrow}(s)\leq a_s T)\leq P(\eta(B_{a_s})\leq a_sT)\leq e^{-c_4 a_s^{1/2}T^{-2}}},
\ee
by Lemma \ref{lemma3.2}. This tends to zero and we conclude that $\PP({\cal M}_{s,\rho},({\cal M}^2_{s,\rho})^c)\leq \d$.

\noindent \textbf{\emph{Step 2.}} Now we prove that $\PP(({\cal M}_{s,\rho})^c,{\cal M}^2_{s,\rho})$ vanishes. On $({\cal M}_{s,\rho})^c$ one can show as in Section \ref{s51} (Step 2) that there exist $t,t'$ such that $m_{s,\rho}\geq\theta_s(k_s(t+t')-k_s(t))\gg \theta_s^2$. Moreover, by \eqref{5.4aa} we know that, with probability larger than $1-\d$, $X(s)\in B_{a_s}$. By definition of $A'$ and Lemma \ref{lemma3.2}, for all but finitely many values of $\lfloor s\rfloor$, for all $x\in B_{a_s}$
\bea
\textstyle{P_x(\eta(B_{(\theta_{s}\log\theta_s)^{1/2}}(x))>\theta_s^2)\leq \exp(-c_4\log\theta_{s})}.\label{5.aa}
\eea
As in \eqref{5.567} we thus get $\PP(({\cal M}_{s,\rho})^c,{\cal M}^2_{s,\rho})\rightarrow 0$. Together with Step 1, the proof of \eqref{5.3a} is finished for $i=2$.
\subsection{Convergence of ${\cal C}^3_s(1,\rho)$}\label{s53}
We now show that \eqref{5.3a} holds for ${\cal C}^3_s(1,\rho)$. This follows readily from Sections \ref{s51} and \ref{s52}. 
Indeed on the one hand,
\be\label{5.601}
\PP(({\cal M}_{s,\rho})^c,{\cal M}^3_{s,\rho})\leq \PP(({\cal M}_{s,\rho})^c,{\cal M}^1_{s,\rho})+\PP(({\cal M}_{s,\rho})^c,{\cal M}^2_{s,\rho}),
\ee
and on the other hand, $\PP({\cal M}_{s,\rho},({\cal M}^3_{s,\rho})^c)\leq \PP({\cal M}_{s,\rho},({\cal M}^1_{s,\rho})^c)$.
Both upper bounds tend by Sections \ref{s51} and \ref{s52} $\P$-a.s.~to zero, which proves \eqref{5.3a} for ${\cal C}^3_s(1,\rho)$.
\subsection{Convergence of ${\cal C}_s^\e(1,\rho)$ and ${\cal C}^\e(1,\rho)$} \label{s54}
In  this section we prove Theorem \ref{theorem1.6}. The convergence of ${\cal C}_s^\e(1,\rho)$ can be proved as that of ${\cal C}^2_s(1,\rho)$ and we
only establish the claim of Theorem \ref{theorem1.6} for ${\cal C}^\e(1,\rho)$. 
Let us write in short ${\cal M}^\e(\rho)$ for the event in the right hand side of \eqref{1.56a}, i.e.~${\cal C}^\e(1,\rho)=\PP({\cal M}^\e(\rho))$. One can show that
\bea\label{5.21ab}
\textstyle{{\cal C}_s^{\e/2}(1-\e^2,\frac{\rho+2\e^2}{1-\e^2})-\bigl[1- {\cal C}_s^{\e/2}\bigl(1-\e^2,\frac{2\e^2}{1-\e^2}\bigr)\bigr]-\d_s }\leq {\cal C}^\e(1,\rho)\leq {\cal C}_s^{\e}(1,\rho) + \d_s,\ \ 
\eea
where $\d_s\equiv\d_s(\rho,\e)$ is given by
\be
\d_s=\textstyle{\PP(\max_{v\in(1-\e^2,1+\rho+\e^2)}\max_{v'\in(1-\e^2,1+\e^2)}\bigl|X_s(v')-X_s(v)\bigr|\leq\e/2,({\cal M}^\e(\rho))^c)}.
\ee
Now, by Theorem 1.3 in \cite{BaCe11} (see the erratum \cite{BaCeErratum} to this theorem) and Theorem 1.1 in \cite{Ce11}, $\P$-a.s., $X_s\stackrel{J_1}{\Longrightarrow}Z_{d,\a}$. By definition of Skorohod's metric, there exists $\d>0$ such that, for $s$ large enough and $\l:[0,1+\rho]\rightarrow [0,1+\rho]$ strictly increasing and continuous,
\be
\textstyle{\PP\bigl(\max\bigl\{\max_{v\in[0,1+\rho]}\bigl|X_s(\l(v))-Z_{d,\a}(v)\bigr|,\max_{v\in[0,1+\rho]}|\lambda(v)-v|\bigr\} >\e^2 \bigr)\leq \d},
\ee
and so $\d_s$ vanishes as first $s\rightarrow\infty$ and then $\e\rightarrow 0$. 
By the statement of Theorem \ref{1.6} for ${\cal C}_s^\e(1,\rho)$, ${\cal C}_s^\e(1,\rho)$ and ${\cal C}_s^{\e/2}(1-\e^2,\frac{\rho+2\e^2}{1-\e^2})$ tend to $\asl_\a(1/(1+\rho))$ as first $s\rightarrow\infty$ and then $\e\rightarrow 0$. It remains to show that $1- {\cal C}_s^{\e/2}(1-\e^2,\frac{2\e^2}{1-\e^2})$ vanishes. But this can be done as in Section \ref{s52} (Step 2). The proof of Theorem \ref{theorem1.6} is finished.

\section{Appendix}\label{S6}
\begin{proof}[Proof of Lemma \ref{lemma3.2}]
Fix $x\in B_{a_n}$ and take $\omega\in A_n$. We use Proposition 2.18 in \cite{BaDe10} to prove \eqref{lemma321}. This proposition states that there exists $c_4\in(0,\infty)$ such that for all $m_n\gg r_n$ for which $U_z\leq m_n/r_n$ for all $z\in B_{r_n}(x)$, $P_x(\eta( B_{r_n}(x)) \leq m_n) \leq e^{-c_4 r_n^2 m_n^{-1}}$, as desired in \eqref{lemma321}. Since we assume $m_n\gg r_n$, it remains to verify whether $U_z\leq m_n/r_n$ for all $z\in  B_{r_n}(x)$. But $B_{r_n}(x)\subseteq B_{2an}$ and so \eqref{3.7} implies that, $U_z\leq c_0(\log a_n)^3\leq m_n/r_n$ for all $z\in   B_{r_n}(x)$. This finishes the proof of \eqref{lemma321}.
The proof of \eqref{lemma322} is as the proof of Lemma 3.2 in \cite{Ce11}, where the claim is proved for $d=2$. 
\end{proof}

\begin{proof}[Proof of Lemma \ref{lemma3.3}] 
Let us first establish \eqref{lemma331}. We begin with the contribution to $\E E R_{m_n}^k$ coming from $y\notin B_{\sqrt{m_n} \log m_n}$.  By \eqref{lemma321} of Lemma \ref{lemma3.2} and \eqref{theorem312} of Theorem \ref{theorem3.1}, 
\bea\label{6.6}
&&\textstyle{\sum_{|y|\geq\sqrt{m_n} \log m_n}\E(P\bigl(\sigma(y)\leq m_n\bigr))}\nonumber\\
&\leq&\textstyle{  \sum_{|y|\geq\sqrt{m_n} \log m_n}(|y|^{d-1} e^{-c_4(\log |y|)^2}+|y|^{2d}\exp(-c_2|y|^{1/3})),}\quad \quad\label{largedist}
\eea
where we used that $P\bigl(\sigma(y)\leq m_n\bigr)\leq P\bigl(\sigma(y)\leq |y|^2\bigr)$ for $|y|\geq \sqrt{m_n} \log m_n$. Hence, the contribution to $\E E R_{m_n}^k$ coming from such $y$'s  tends to zero. Now, let $y\in B_{\sqrt{m_n} \log m_n}$. Since $|B_{\sqrt{m_n} \log m_n}|\P(A_n^c)\ll n^{-2} $, it suffices to bound $\E[ P(\sigma(y)\leq m_n)\1_{A_n}]$. We have that,
\bea\label{6.7}
 P(\sigma(y)\leq m_n) 
\hspace{-2mm}&\leq&\hspace{-2mm} P(\eta(B_{\sqrt{m_n} \log m_n})\leq m_n)+ P(\sigma(y)\leq \eta(B_{\sqrt{m_n} \log m_n})).
\eea
By \eqref{lemma321} of Lemma \ref{lemma3.2}, on $A_n$, the first probability in \eqref{6.7} is smaller than $e^{-c_4(\log m_n)^2}$. By the strong Markov property, 
\be\label{6.8}
P(\sigma(y)\leq \eta(B_{\sqrt{m_n} \log m_n})) = g_{B_{\sqrt{m_n} \log m_n}}(0,y) (g_{B_{\sqrt{m_n} \log m_n}}(y,y))^{-1},
\ee
where $g_{B}(x,z)=E_x\bigl(\int_0^{\eta(B)}\1_{J(t)=z} dt\bigr)$. Write  $D_1=B_{\sqrt{m_n}/2}$, and $D_2=B_{\sqrt{m_n} \log m_n}\setminus D_1$. We distinguish  whether $d\geq 3$ or $d=2$. Let $d\geq 3$ first. Take $y\in D_1$. By \eqref{theorem311} and \eqref{theorem313} of Theorem \ref{theorem3.1},  $g_{B_{\sqrt{m_n} \log m_n}}(0,y)\leq c_3(m_n^{-d/2+1}\wedge |y|^{2-d})$, and so
\be\label{6.9}
\textstyle{\sum_{y\in D_1}\E( P(\sigma(y)\leq m_n))^k\leq\sum_{y\in D_1}c_3(m_n^{-d/2+1}\wedge|y|^{2-d})^k\E \bigl(g_{B_{\sqrt{m_n} \log m_n}}(y,y)\bigr)^{-k}.}
\ee
Since $ B_{\sqrt{m_n}}(y)\subseteq  B_{\sqrt{m_n} \log m_n}$ and since the $\tau$'s are identically distributed, \eqref{6.9} is bounded by
\be\label{6.10}
\textstyle{\E g_{B_{\sqrt{m_n}}}^{-k}(0,0)\sum_{y\in D_1}c_{3}(m_n^{-d/2+1}\wedge|y|^{2-d})^k\leq c_6m_n^{1/k}\E g_{B_{\sqrt{m_n}}}^{-k}(0,0)\leq c_5 m_n^{1/k},}
\ee
where we  used \eqref{theorem312} and  \eqref{theorem314} of Theorem \ref{theorem3.1} to bound $\E g_{B_{\sqrt{m_n}}}^{-k}(0,0)\leq c$ for $c\in (0,\infty)$. Thus, the contribution to $\E E R_{m_n}^k$ coming from $y\in D_1$ satisfies \eqref{lemma331}. Now let $y\in D_2$. We bound $P(\sigma(y)\leq m_n)$ by
\bea\label{6.14}
\hspace{-2mm}&&\hspace{-2mm}\textstyle{\sum_{z:\ |z|=|y|/2}P( J(\eta( B_{|z|/2}))=z, \eta(B_{|y|/2})\leq m_n, \sigma(y)\leq m_n)}\nonumber\\
\hspace{-2mm}&\leq&\hspace{-2mm} \textstyle{\sum_{z:\ |z|=|y|/2}P_z(\sigma(y)\leq m_n)P( J(\eta(B_{|y|/2}))=z, \eta(B_{|y|/2})\leq m_n).}
\eea
As in \eqref{6.7} and \eqref{6.8}, $P_z(\sigma(y)\leq m_n)\leq 2 c_3|z-y|^{2-d}  (g_{B_{\sqrt{m_n}}(y)}(y,y))^{-1}$. For $c\in (0,\infty)$ large enough, $2c_3|z-y|^{2-d}\leq c|y|^{2-d}$, and so we get that
\bea\label{6.16}
P(\sigma(y)\leq m_n)\leq
\textstyle{\frac{c|y|^{2-d} }{g_{B_{\sqrt{m_n}}(y)}(y,y)}P(\eta(B_{|y|/2})\leq m_n)}\leq\textstyle{\frac{c|y|^{2-d} }{g_{B_{\sqrt{m_n}}(y)}(y,y)}\exp(-\tfrac{c_4}{4} |y|^2 m_n^{-1})},\ 
\eea
where we used \eqref{lemma321} of Lemma \ref{lemma3.2} in the last step. Using \eqref{6.16} and proceeding as in \eqref{6.10}, one sees that the contribution to $\E E R_{m_n}^k$  coming from $y\in D_2$ is as claimed in \eqref{lemma331}. The proof of \eqref{lemma331} is finished for $d\geq 3$. 

Let $d=2$. Recall that for $y\in D_1\cup D_2$ it is sufficient to bound the expected value on $A_n$. In fact, we will bound $P(\sigma(y)\leq \theta_n)$ for every $\omega\in A_n$ by a function $f_{m_n}(|y|)$ that is as in \eqref{lemma332} and this way also prove that the contribution coming from $y\in D_1\cup D_2$ satisfies \eqref{lemma331}. Let $y\in D_1$.  By \eqref{theorem313} of Theorem \ref{theorem3.1}, one can show that $g_{\sqrt{m_n} \log m_n}(0,y)\leq c_3(\log \sqrt{m_n} /|y|)$, and so
\be\label{6.12}
\textstyle{\sum_{y\in D_1}P(\sigma(y)\leq \theta_n)\leq \sum_{y\in D_1} c_3 g_{B_{\sqrt{m_n} \log m_n}}^{-1}(y,y)(\log \sqrt{m_n} /|y|).}
\ee
As in Lemma 3.3 in \cite{Ce11}, one sees that $g_{\sqrt{m_n} \log m_n}(y,y)\geq g_{B_{\sqrt{m_n}}(y)}(y,y) \geq c_7 \log m_n$. We set $f_{m_n}(|y|)\equiv 2c_3/c_7 (1-\log(|y|/\sqrt{m_n}))$. A simple calculations shows that hence the contribution coming from $y\in D_1$ is as claimed in \eqref{lemma331} and \eqref{lemma332}. Let $y\in D_2$. As in \eqref{6.14} - \eqref{6.16} we bound 
\be\label{6.17}
P_z(\sigma(y)\leq m_n)
\leq 
2 g_{B_{\sqrt{m_n} \log m_n}}(z,y)/g_{B_{\sqrt{m_n}}(y)}(y,y).
\ee
Since $|z-y|\geq \sqrt{m_n}/2$, one can check that $g_{B_{\sqrt{m_n} \log m_n}}(z,y)\leq c$ to get that
\be\label{6.18}
P(\sigma(y)\leq m_n)\leq  c\exp(-\tfrac{c_4}{4} |y|^2 m_n^{-1}) (c_7\log m_n)^{-1}\equiv f_{m_n}(|y|),
\ee
where we used that, $g_{B_{\sqrt{m_n}}(y)}(y,y)\geq c_7\log m_n$. The sum over $y\in D_2$ of $(f_{m_n}(|y|))^k$ satisfies \eqref{lemma331} and \eqref{lemma332} for $k=1,2,4$. The proof of \eqref{lemma331} is complete.

To finish the proof of Lemma \ref{lemma3.3}, it remains to prove \eqref{lemma332} for $y\in B_{m_n}\setminus (D_1\cup D_2)$. By \eqref{lemma322} of Lemma \ref{lemma3.2}, on $A_n$, we know that we may set $f_{m_n}(|y|)=e^{-c_4(\log |y|)^2}$ for $y\in B_{m_n}\setminus (D_1\cup D_2)$. Eq. \eqref{largedist} shows that the contribution to the sum in \eqref{lemma332}  coming from these $y$'s vanishes. This finishes the proof of Lemma \ref{lemma3.3}. 
\end{proof}
\bibliographystyle{abbrv}      
 
\bibliography{cup_ref}

 \end{document}